\documentclass{article}

\usepackage{amssymb,amsmath,mathrsfs}
\usepackage[colorlinks=false]{hyperref}
\usepackage{comment}
\usepackage{diagbox}
\usepackage{esint}
\usepackage{geometry}
\usepackage{diagbox}
\usepackage{manyfoot}

\usepackage{graphicx,color}
\usepackage[outercaption]{sidecap}
\usepackage{xfrac}
\usepackage{subfig}
\usepackage{tikz}
\usepackage{esint}
\usepackage{enumitem}
\usepackage{pdfsync}
\usepackage{fancyhdr}
\usepackage[nottoc,notlof,notlot]{tocbibind}
\usepackage[titles,subfigure]{tocloft}

\usepackage{amssymb}
\usepackage{amsmath}
\usepackage{latexsym}
\usepackage{amsthm}
\usepackage{eucal}
\usepackage{amsthm}
\theoremstyle{plain}
\newtheorem{theorem}{Theorem}[section]

\newtheorem{lemma}[theorem]{Lemma}
\newtheorem{proposition}[theorem]{Proposition}
\theoremstyle{definition}

\newtheorem{remark}[theorem]{Remark}

\def\e{\varepsilon}
\def\rr{{\mathbb R}}

\def\NN{{\mathbb N}}
\def\ZZ{{\mathbb Z}}

\def\ms{\mu_\sigma}

\DeclareMathOperator*{\R}{\mathbb{R}}

\DeclareMathOperator*{\dist}{dist}

\newcommand{\f}{\varphi}
\newcommand{\ej}{\varepsilon_j}
\newcommand{\rj}{r_j}
\newcommand{\s}{\sigma}
\newcommand{\sj}{\sigma_j}
\newcommand{\dej}{\delta_j}
\newcommand{\A}{\mathcal{A}}
\newcommand{\F}{\mathcal{F}}
\newcommand{\Q}
{\overline{Q}}

\def\XXint#1#2#3{{\setbox0=\hbox{$#1{#2#3}{\int}$} 
  \vcenter{\hbox{$#2#3$}}\kern-.5\wd0}}

\numberwithin{equation}{section}

\begin{document}

\title{Emergence of a convex strange term via homogenization \\ of non-local energies at the critical exponent}

\author{ Giuseppe Cosma Brusca \\
{\small SISSA}\\ \small
via Bonomea 265\\ \small
34146 Trieste, Italy\\ \small gbrusca@sissa.it
\and
Giuliana Fusco\\ 
{\small Scuola Superiore Meridionale}\\ {\small via Mezzocannone 4}\\\small 80134 Napoli, Italy \\ \small  g.fusco@ssmeridionale.it
}
\date{}

\maketitle
{
  \renewcommand{\thefootnote}{}
  \footnotetext[0]{\textit{AMS Classifications.} 49J45, 47G20, 49M25, 35B27.}
  \footnotetext[0]{\textit{Keywords.} Non-local energies, homogenization, perforated domains, $\Gamma$-convergence, separation of scales.}
}

\begin{abstract}
   We derive the $\Gamma$-limit of convolution-type and discrete energies subject to Dirichlet boundary conditions on periodically perforated domains at the critical exponent. We assume that the length-scale of the non-local interactions is much smaller than the side-length of the cubic perforations and prove that a separation of scales occurs. Exploiting the analogies between the variational frameworks of our interest, we employ a unified argument to overcome the technical difficulties that arise from the scaling invariance of the energies. Our multiscale analysis yields a novel observation that is not related to the non-local nature of the functionals, but rather to the analysis at the critical exponent: we prove that the energy density of the \textit{strange term} is convex, even in the vector-valued setting.
\end{abstract}

\section{Introduction}

In a celebrated paper \cite{CioMur}, Cioranescu and Murat investigated the asymptotic behaviour of Dirichlet problems in periodically perforated domains of the form
\begin{equation*}\Omega_\delta:=\Omega\setminus \bigcup_{i\in \ZZ^d} \Q(i\delta,r_\delta),
\end{equation*}
where $\Omega$ is a regular domain of $\rr^d$, $\delta$ is the period of the array of perforations and the cubic perforation $\Q(i\delta, r_\delta)=i\delta+[-\frac{r_\delta}{2},\frac{r_\delta}{2}]^d$ has side-length
\begin{equation*}
    r_\delta= \begin{cases}
   \exp(-\delta^{-2}) & \text{ if } d=2, \\[5pt]
     \delta^\frac{d}{d-2} & \text{ if } d\geq 3;
    \end{cases}
\end{equation*}
see also the earlier paper by Marchenko and Khruslov \cite{MK}. They proved that  this choice of the size of the perforations is critical since, for fixed $\phi\in H^{-1}(\Omega)$, the family of solutions $\{u_\delta\}_\delta$ to the problems
\begin{equation*}
    -\Delta u_\delta = \phi, \qquad u_\delta\in H^1_0(\Omega_\delta),
\end{equation*}
weakly converges to a function $u\in H^1_0(\Omega)$ that solves a different limit problem, namely
\begin{equation*}
     -\Delta u + C_d u = \phi, \qquad u\in H^1_0(\Omega),
\end{equation*}
where an additional zero-order term, the so-called {\em strange term}, originates from the presence of the perforations. The constant $C_d$ that appears in the limit equation is characterized by a capacitary problem and, similarly to the scale $r_\delta$, its nature varies in accordance with the dimension $d$. If $d>2$, the constant is given by the $2$-capacity of the unit cube in $\R^d$. If $d=2$, the same characterization no longer holds since, by scaling invariance, the $2$-capacity of every bounded subset in $\R^2$ is zero and the constant is expressed in terms of the capacities of the unit square relative to squares whose side-lengths diverge. These relative $2$-capacities exhibit a certain logarithmic decay (with respect to the side-length of the larger square) which also occurs for the relative $d$-capacities of a $d$-dimensional cube when $d>2$, as a general consequence of the scaling invariance. Such a logarithmic behaviour is singular as it is in contrast with the polynomial behaviour of the relative $p$-capacity of a $d$-dimensional cube for $p\in(1,d)$ and $d\geq 2$, and for this reason we refer to $p\in(1,d)$ as a subcritical exponent and to $p=d$ as the {\em critical exponent}.

In \cite{AnsBraJMPA}, Ansini and Braides pursued a variational approach for the study of a non-linear vector-valued variant of the original problem for subcritical exponents $p\in(1,d)$. They computed the $\Gamma$-limit of the functionals $F_\delta:L^{p}(\Omega; \mathbb R^m)\to[0,+\infty]$ defined as
    \begin{equation*}
        F_\delta(u):=\begin{cases} \displaystyle
            \int_\Omega f(\nabla u)\, dx  & \text{ if } u\in W^{1,p}_0(\Omega_\delta; \mathbb R^m), \\
            +\infty & \text{ otherwise}, 
        \end{cases}
    \end{equation*}
    for $f: \mathbb R^{m\times d}\to[0,+\infty)$ a quasiconvex $p$-homogeneous integrand satisfying $p$-growth conditions. Conformally with the work by Cioranescu and Murat, they assumed that $r_\delta = \delta^{d/(d-p)}$ and proved that the $\Gamma$-limit of $\{F_\delta\}_\delta$ equals 
\begin{equation*}
    \int_\Omega f(\nabla u)\, dx+\int_\Omega\varphi_p(u)\,dx, \qquad  u\in W^{1,p}_0(\Omega;\mathbb R^m),
\end{equation*}
where $\varphi_p:\R^m\to[0,+\infty)$ is given by a non-linear capacitary formula:
\begin{equation*}
    \varphi_p(z):=\min\Bigl\{\int_{\mathbb R^d} f(\nabla u)\, dx : u-z\in W^{1,p}(\mathbb R^d; \mathbb R^m), u=0 \text{ on } Q(1)\Bigr\}.
\end{equation*}
This result can be regarded as a generalization of the one by Cioranescu and Murat to a non-linear and vectorial setting since, upon assuming that $f$ is convex and differentiable, the property of the convergence of minima of $\Gamma$-convergence implies the convergence of the solutions to the corresponding Euler-Lagrange equations (see \cite{BDF, DM}). 

The analogous problem at the critical exponent $p=d$ carries the technical complications entailed by the scaling invariance and, for this reason, the corresponding extension of the above analysis required a different proof provided by Sigalotti in \cite{Sig-cont}. It is shown that, under the assumption $r_\delta= \exp(-\delta^{d/(1-d)})$, the limit energy is given by 
\begin{equation*}
    \int_\Omega f(\nabla u)\, dx+\int_\Omega\varphi_d(u)\,dx, \qquad  u\in W^{1,d}_0(\Omega;\mathbb R^m),
\end{equation*}
where the density $\varphi_d:\R^m\to[0,+\infty)$ is now described by a {\em homogenization-type formula}:
\begin{equation*}\varphi_d(z):= \lim_{L\to+\infty}(\log L)^{d-1}\min\Bigl\{\int_{Q(L)} f(\nabla u)\, dx : u-z\in W^{1,d}_0(Q(L); \mathbb R^m), u=0 \text{ on } Q(1)\Bigr\}
\end{equation*}
and the factor $(\log L)^{d-1}$ compensates for the logarithmic decay of the relative non-linear capacity at the critical exponent. 

In a similar vein, the aim of the present paper is to perform the variational analysis of Dirichlet problems on periodically perforated domains at the critical exponent in a general non-local framework, providing an extension of the results for subcritical exponents recently obtained by Alicandro, Gelli, and Leone \cite{AGL} and by the second author \cite{Fus}. We propose a unified analysis that encompasses both convolution-type functionals, for which a variational theory has been developed by Alicandro, Ansini, Braides, Piatnitski, and Tribuzio \cite{AABPT}, and discrete functionals within the framework investigated by Alicandro and Cicalese \cite{AliCic}. Functionals considered in these two settings share many similarities, starting from their structure of pairwise-interactions energies. Given an energy density $f$ that satisfies standard growth conditions of order $p>1$, convolution-type energies studied in \cite{AABPT} are of the form 
\begin{equation*}
    \frac{1}{\e^{d+p}}\int_{\Omega\times\Omega}f\Bigl(\frac{y-x}{\e}, u(y)-u(x)\Bigr)\, d\mathcal{L}^d(x)\, d\mathcal{L}^d(y)
\end{equation*}
and can be regarded as a continuous counterpart of the discrete energies considered in \cite{AliCic} given by
\begin{equation*}
\e^{d-p}\sum_{{\substack{\alpha,\beta\in \Omega\cap\e\ZZ^d \\ [\alpha,\beta]\subset \Omega}}}f\Bigl(\frac{\beta-\alpha}{\e}, u(\beta)-u(\alpha)\Bigr),
\end{equation*}
where $[\alpha,\beta]:=\{(1-t)\alpha+t\beta: t\in[0,1]\}$, up to taking into account all possible interactions and not only those involving points connected by a segment that lies in the domain. In both instances, $\e$ is a small positive length-scale (that in the discrete case corresponds to the lattice spacing) responsible for a non-local-to-local passage, provided that the density $f$ satisfies suitable decay assumptions in the first variable. As $\e\to0$, the limit energy is given by the \textit{local} functional
\begin{equation*}
    \int_\Omega f_{\rm hom}(\nabla u)\, dx, \qquad u \in W^{1,p}(\Omega; \rr^m),
\end{equation*}
where $f_{\rm hom}$ is expressed by an asymptotic formula having the same structure in both the continuous and discrete case. As for convolution-type energies, such limit behaviour can be interpreted as a `concentration' phenomenon since relevant pairwise interactions are close to the diagonal of the domain of integration $\Omega\times \Omega$ as $\e\to0$ (see the seminal papers by Bourgain, Brezis, and Mironescu \cite{BBM} and by Ponce \cite{P2, P1}), while the discrete-to-continuum passage is usually referred to as a homogenization process. For the sake of notation, we let $f_{\rm hom}$ denote the `homogenized' energy density in both circumstances. 

Putting aside temporarily the difference between the admissible pairs of interaction (that, in fact, coincide if $\Omega$ is convex), the comparable structure of convolution-type and discrete energies suggests rewriting both functionals as the same double integral that, at the critical exponent $p=d$, reads as
\begin{equation*}
\int_{\Omega\times\Omega}f\Bigl(\frac{y-x}{\e}, u(y)-u(x)\Bigr)\, d\mu_\e(x)\, d\mu_\e(y),
\end{equation*}
upon introducing the reference measures
\begin{equation*}
    \mu_\e:=\begin{cases}
        \displaystyle\frac{1}{\e^d}\mathcal{L}^d & \qquad  \textit{(continuous case),} \\[7pt]
        \displaystyle\sum_{\alpha\in \e\ZZ^d}\delta_\alpha & \qquad \textit{(discrete case),}
    \end{cases}
\end{equation*}
for every $\e>0$. With this approach, admissible configurations are $\mu_\e$-measurable functions; i.e., either Lebesgue-measurable functions (continuous case) or functions defined on $\e\mathbb Z^d$ (discrete case) that we identify with their piecewise-constant interpolations on the cells of the lattice. This identification allows us to regard both continuous and discrete configurations as elements of $L^d(\Omega;\mathbb R^m)$ and to study their asymptotic behaviour through $\Gamma$-convergence with respect to the same strong topology. At this point, boundary conditions can be imposed $\mu_\e$-a.e. on the set of perforations obtaining the constrained non-local energies 
\begin{equation}\label{energieintro}
F_{\e,\delta}(u):=\int_{\Omega\times\Omega}f\Bigl(\frac{y-x}{\e}, u(y)-u(x)\Bigr)\, d\mu_\e(x)\, d\mu_\e(y),\qquad \text{ if } u(x)=0 \text{ for } \mu_\e\text{-a.e. }x \in \Omega\setminus\Omega_\delta.
\end{equation}

In this work we assume that the density $f$ is $d$-homogeneous and satisfies $d$-growth conditions in the second variable and that the vanishing parameters $\delta, r_\delta$ are ruled by $\e$; i.e, 
\begin{equation*}
 \delta=\delta_\e, \qquad r_\e=r_{\delta_\e}\sim\exp(
-\delta_\e^{\frac{d}{1-d}}).
\end{equation*} 
Up to fixing the same kind of reference measures for every $\e$, we prove that if the length-scale $\e$ is small compared to the side-length of a perforation $r_{\e}$, then a separation of scales occurs and the $\Gamma$-limit of $\{F_{\e,\delta_\e}\}_\e$ is the same that is obtained letting first $\e\to0$ and then $\delta\to0$ in \eqref{energieintro}; that is, 
\begin{equation}\label{limitintro}
\int_\Omega f_{\hom}(\nabla u)\,dx
+
\int_\Omega\varphi(u)\,dx, \qquad u\in W^{1,d}(\Omega;\mathbb R^m),
\end{equation}
where $f_{\rm hom}$  is determined by the continuous or discrete setting and, accordingly, the density of the strange term is given by the homogenization-type formula
\begin{equation}\label{phiintro}
\varphi(z)
:=
\lim_{L\to+\infty}
(\log L)^{d-1}
\min\Bigl\{
\int_{Q(L)}f_{\hom}(\nabla u)\,dx:
u-z\in W^{1,d}_0(Q(L);\mathbb R^m),
\ u=0\text{ on }Q(1)
\Bigr\}.
\end{equation}

The problem of perforated domains is a classical topic. Several results have been obtained concerning its many variants and employing different techniques such as the work by Dal Maso and Garroni \cite{DMG} and by Dal Maso and Murat \cite{DMM} for general domains, the extension theorem by Acerbi, Chiadò Piat, Dal Maso, and Percivale \cite{ACPDMP}, and the approach based on the unfolding method pursued by Cioranescu, Damlamian, Donato, Griso, and Zaki \cite{CDDGZ} (see also \cite{CDGb}). Among the results obtained over the last years in various frameworks, we mention \cite{CanCheZar, CherDonRos, KhraPlum} in the case of different boundary conditions, \cite{FocAdvMath} in the fractional setting, and \cite{BraChiDEl, BP2} in the setting of convolution-type energies. Very recently, the problem has been treated in the context of stochastic homogenization by Scardia, Zemas, and Zeppieri \cite{SZZ} (see also \cite{Bas}) and, for fractional energies, by Deangelis, Focardi, and Zeppieri \cite{DeaFocZep} (see also the earlier paper \cite{FocComPDEs}). In the fractional setting, Palatucci has thoroughly discussed the phenomenon of capacitary screening in periodically perforated domains for Gagliardo seminorms defined on the whole $\R^d$ \cite{Pal} and a description of fractional capacitary potentials has been given in \cite{FocPalZep}. Moreover, Braides, Noselli, and Vincini have studied vectorial problems in the context of \mbox{hyperelasticity} \cite{BraNosVin}. These works, as well as many others in the literature, mainly deal with the case of subcritical exponents (that, in a fractional setting, are characterized by the inequality $p<d/s$, where $s$ denotes the fractional order). The necessity of treating a scaling-invariant problem leads us to carry out a delicate study, that substantially departs from the ones performed in the subcritical cases. Our strategy is partly inspired by the works \cite{BraBru, BruNLA} where, in the scalar case, an extension to the critical exponent for the problem of perforated domains in heterogeneous media originally studied in \cite{AnsBraAAM} is obtained.

\smallskip

An outcome of our analysis is that the energy density $\varphi$ in \eqref{phiintro} is convex, a feature that is relevant (and non-trivial) in the vector-valued case $m>1$. Besides the technical aspect, the convexity of the strange term reflects the emergence of a multiscale microstructure since optimal configurations are obtained distributing energy equally on every scale from $r_\e$ (the side-length of the perforations) to the much larger scale $\delta_\e$ (the period of the perforations). This behaviour displays an analogy with the Ginzburg-Landau model (see \cite{BBH, Jer, San}). In our argument, such uniform energy distribution is obtained through a dyadic construction around each perforation similarly to \cite{ABCDP}, where the formation of topological singularities is studied in presence of inhomogeneities. The convexity of the energy density $\varphi$ is not related to the non-local nature of the problem and actually holds for the original Dirichlet problem on perforated domains at the critical exponent. In fact, this property has not been the starting point of our investigation; from our perspective, it is more appropriate to regard it as an outcome of the multiscale analysis as we shall explain in a forthcoming section. We believe that this observation, together with the flexibility of the techniques employed in the present work, may be of use for the study of other problems concerning perforated domains at the critical exponent.     

\smallskip

The approach that we pursue highlights the correspondence between convolution-type and discrete settings. As already mentioned, the main difference lies in the family of pairwise interactions taken into account: on the one hand, in the continuous case every pair of points in $\Omega$ interacts; on the other hand, discrete energies only allow for interactions of points connected by a segment inside the domain. At a technical level, it is reasonable to expect that the asymptotic analysis yields the same outcome for both kinds of interaction, upon assuming that $\Omega$ has Lipschitz boundary. Indeed, short-range interactions are favored in the passage from non-local to local and, therefore, the global geometry of $\Omega$ should not affect the limit energy. In terms of discrete energies, this means that we should be able to consider equivalently all possible pairwise interactions; however, we stick to the above technical constraint in order to resort to the results in \cite{AliCic}. 

To overcome this discrepancy, it is enough to rewrite the starting energies through a change of variables: letting $\xi:=(y-x)/\e$, convolution-type energies equal
\begin{equation*}
    \frac{1}{\e^{d}}\int_{\rr^d}\int_{\{x\in\Omega : x+\e\xi\in\Omega\}}f(\xi, u(x+\e\xi)-u(x))\, d\mathcal{L}^d(x)\, d\mathcal{L}^d(\xi)
\end{equation*}
and analogously, letting $\xi:=(\beta-\alpha)/\e$, discrete energies read as 
\begin{equation*}
 \sum_{\xi\in\ZZ^d} \sum_{\{\alpha\in \Omega\cap\e\ZZ^d: [\alpha,\alpha+\e\xi]\subset \Omega\}}f(\xi, u(\alpha+\e\xi)-u(\alpha)),
\end{equation*}
in such a way that the unconstrained energies can be written as
\begin{equation}\label{unifiedintro}
    \int_{\mathbb R^d}
\int_{\Omega_{\mu_\e}(\xi)}
f(\xi,u(x+\e\xi)-u(x))\, 
d\mu_\e(x)\,d\mu_1(\xi),
\end{equation}
where we set
\begin{equation*}
    \Omega_{\mu_\e}(\xi):=\begin{cases}
        \{x\in \Omega: x+\e\xi \in \Omega\} & \text{ if } \mu_\e=\displaystyle\frac{1}{\e^d}\mathcal{L}^d \qquad  \textit{(continuous case),} \\[7pt]
        \{x\in \Omega: [x,x+\e\xi]\subset \Omega\} & \text{ if } \displaystyle \mu_\e=\sum_{\alpha\in \e\ZZ^d}\delta_\alpha  \qquad \textit{(discrete case).}
    \end{cases}
\end{equation*}
In practice, we investigate the asymptotic behaviour of energies \eqref{unifiedintro} subject to Dirichlet boundary conditions, where the energy density
$f:\mathbb R^d\times\mathbb R^m\to[0,+\infty)$
is assumed to be $d$-homogeneous and locally Lipschitz-continuous in the second variable, coercive on short-range interactions and to satisfy suitable decay assumptions as $|\xi| \to +\infty$ (see (H), (G), and (L) in Section \ref{Sec: Setting of the problem and the main result}). A simultaneous treatment is made possible by the common variational structure of these frameworks that allows us to employ a common argument for the derivation of the $\Gamma$-limit drawing from the general results contained in \cite{AABPT}  and \cite{AliCic}. Nevertheless, it is worth mentioning that some useful tools such as Poincaré and Poincaré-Wirtinger inequalities in discrete form seem to be lacking in the literature. These constitute a substantial part of our technical analysis and they are proved in Appendix \ref{appendix} using some genuinely discrete arguments such as the construction of paths involving nearest-neighbours, uniform estimates on their multiplicity, and the use of suitable piecewise-affine interpolations.

   \smallskip
   
Finally, we note that energies \eqref{energieintro} are ruled by two small parameters $\e,\delta$; hence, their asymptotic behaviour is affected by the interplay between these scales and other significant regimes may be investigated, such as $\e\sim r_\delta$. This regime has been already analyzed at the critical exponent in the discrete case (see \cite{Sig-dis}) and at subcritical exponents for convolution-type energies (see \cite{AGL}). As for the latter, the corresponding extension to the critical exponent is not dealt with in this work; however, we expect that this could be achieved by adapting (and possibly simplifying) some of the arguments illustrated herein and that the limit energy is of the same form as \eqref{limitintro}, with a density $\f$ given now in terms of some non-linear \textit{non-local} capacitary problems.

\smallskip

The paper is organized as follows. In Section \ref{Sec: notation} we introduce the notation and recall the notions of capacity and $\Gamma$-convergence used throughout the paper. In Section \ref{Sec: Setting of the problem and the main result} we present the unified continuous-discrete framework, recall the homogenization result for the unconstrained energies (see Theorem \ref{theorem: Gamma unconstrained}), and state the main result of the paper (see Theorem \ref{thm: main discreto}). In Section \ref{Sec: preliminary results} we recall some useful results such as estimates for long-range interactions and Poincaré-type inequalities. Here, we also prove the convexity of the energy density of the strange term. In Section \ref{Sec: Approximating capacitary-type energy densities} we study the asymptotics of the capacitary-type energy densities and, finally, in Section \ref{Sec: asymptotic analysis} we carry out the asymptotic analysis: first, we prove a joining lemma on varying domains and an auxiliary result that allows one to reconstruct the strange term as the limit contribution of the energy near the perforations; then, we prove the lower and the upper bounds for the $\Gamma$-limit. The appendix contains the proofs of the Poincaré and Poincaré-Wirtinger inequalities, with particular emphasis on the discrete setting.

\section{Notation and basic definitions}
\label{Sec: notation}

In this section we introduce the main notation and recall the definitions of  capacity and $\Gamma$-convergence.

\smallskip

{\bf Notation.} In what follows we let $d,m \in \mathbb{N}$ denote two fixed positive integers corresponding to the dimension of the reference and the target space of functions we consider, respectively. We assume that $d\geq2$ and let $\Omega$ denote a bounded open subset of $\rr^d$ with Lipschitz boundary. We let $\{e_{1},\dots,e_{d}\}$ denote the standard orthonormal basis in $\mathbb{R}^{d}$. Given any positive integer $k$, we let $|x|$ denote the Euclidean norm of $x\in \rr^k$. We denote by $\mathbb{S}^{d-1}$ and $\mathbb{S}^{m-1}$ the unit sphere in $\mathbb{R}^{d}$ and $\rr^m$, respectively. For $x\in\rr^d$ and $r>0,$ we denote the open cube in $\mathbb{R}^{d}$ of center $x$
and side length $r$ as
\begin{equation*}
    Q(x,r):=x+\Bigl(-\frac{r}{2},\frac{r}{2}\Bigr)^{d},
\end{equation*}
and we denote the open ball in $\mathbb{R}^{d}$ of center $x$
and radius $r$ as $B(x,r)$. When the center $x$ coincides with $0$, we simply write $Q(r)$ or $B(r)$. Given points $x,y\in \mathbb R^d$, we set $[x,y]:=\{(1-t)x+ty: t\in[0,1]\}$. If $A$ is a subset of $\mathbb{R}^{d}$, we let $\overline{A}$ denote its closure, and set $A^c:=\rr^d\setminus A$ and $\text{dist}_\infty(x,A):=\inf\{|y-x|_\infty:y \in A\}$ for all $x\in \rr^d$, where we let $|x|_\infty:=\max\{|x_i|:i\in\{1,\dots,d\}\}$. Given $t \in \mathbb{R}$, we let  $\lfloor t\rfloor$ and $\lceil t\rceil$ denote the lower and the upper integer part of $t$, respectively. Unless otherwise stated, $C$ will always denote a generic strictly positive constant that may change from line to line.

\smallskip

{\bf Capacity.} We recall the definition of $p$-capacity for $p>1$. We refer to \cite{HKM} for further details. Given open sets $U\subset V \subseteq \rr^d$ and $p> 1,$ we define the $p$-capacity of $U$ relative to $V$ as
\begin{equation*}
    {\rm Cap}_p(U,V):=\min\Bigl\{\int_V |\nabla v|^p\, dx : v\in W^{1,p}_0(V): v= 1 \text{ on } U\Bigr\}.
\end{equation*}
If $U=B(r)$ and $V=B(R)$ for some $0<r<R$, the explicit computation of the corresponding relative capacity can be performed by the reduction to a $1$-dimensional problem. In particular, when $p=d$, it holds that
\begin{equation}\label{cap balls}
    {\rm Cap}_d(B(r),B(R))= \mathcal{H}^{d-1}(\mathbb{S}^{d-1})\Bigl(\log \Bigl(\frac{R}{r}\Bigr)\Bigr)^{1-d}. 
\end{equation}
This logarithmic behaviour differs from that exhibited in the case $p<d$, where the relative capacity depends on the radii $r$ and $R$ with polynomial growth. For this reason, we address to $p=d$ as the {\it critical exponent} for the capacity. By \eqref{cap balls}, we have that ${\rm Cap}_d(B(1),B(L))=\mathcal{H}^{d-1}(\mathbb{S}^{d-1})(\log L)^{1-d}$ and, through a comparison argument, it is possible to infer that 
\begin{equation*}
    \lim_{L\to+\infty}(\log L)^{d-1} {\rm Cap}_d(Q(1),Q(L))= \mathcal{H}^{d-1}(\mathbb{S}^{d-1}),
\end{equation*}
so that, in particular, there exists a positive constant $C$ such that
\begin{equation}\label{stima uniforme capacita}
    \frac{1}{C}\leq (\log L)^{d-1} {\rm Cap}_d(Q(1),Q(L)) \leq C
\end{equation}
for every $L>4$.

Analogous capacitary problems can be set in a vector-valued framework. Given $z\in\rr^m$ and $U,V$ as above, we may consider the minimum problem
\begin{equation*}
    \min\Bigl\{\int_V |\nabla v|^d\, dx : v\in W^{1,d}_0(V; \rr^m): v= z \text{ on } U\Bigr\}.
\end{equation*}
Due to the scaling property and rotational invariance, it is easily seen that the above minimum coincides with
\begin{multline*}
     \min\Bigl\{\int_V |\nabla v|^d\, dx : v\in W^{1,d}_0(V; \rr^m): v= (1,0,...,0) \text{ on } U\Bigr\}|z|^d \\
     = \min\Bigl\{\int_V |\nabla v|^d\, dx : v\in W^{1,d}_0(V): v= 1\text{ on } U\Bigr\}|z|^d=  {\rm Cap}_d(U,V)|z|^d.
\end{multline*}

\smallskip

{\bf $\Gamma$-convergence.} For our purposes, it suffices to recall the definition of $\Gamma$-convergence with respect to the $L^d$-topology. We refer to \cite{BDF} and \cite{DM}. Given functionals $F_\e:L^d(\Omega;\rr^m)\to[0,+\infty], \ \e>0,$ and $F:L^d(\Omega;\rr^m)\to[0,+\infty]$, we say that $F$ is the $\Gamma$-limit of $\{F_\e\}_\e$ with respect to the strong topology of $L^d(\Omega;\rr^m)$ as $\e\to0$ if, given any sequence $\{\ej\}_j$ converging to $0$, the following hold:
\begin{itemize}
    \item[$(i)$] for every $u\in L^d(\Omega;\rr^m)$ and every $\{u_j\}_j\subset L^d(\Omega;\rr^m)$ such that $u_j\to u$ in $L^d(\Omega;\rr^m)$ as $j\to+\infty$, it holds
    \begin{equation*}
        \liminf_{j\to+\infty} F_{\ej}(u_j)\geq F(u) \qquad \qquad \text{(liminf inequality)},
    \end{equation*}
    \item[$(ii)$] for every $u\in L^d(\Omega;\rr^m)$ there exists $\{v_j\}_j\subset L^d(\Omega;\rr^m)$ such that $v_j\to u$ in $L^d(\Omega;\rr^m)$ as $j\to+\infty$ and
    \begin{equation*}
        \limsup_{j\to+\infty} F_{\ej}(v_j)\leq F(u) \qquad \qquad \text{(limsup inequality)}.
    \end{equation*}
\end{itemize}

\section{Setting of the problem and the main result}
\label{Sec: Setting of the problem and the main result}
In this section we introduce further notation, we describe the setting of our problem, and we state our main result.

To make our notation more flexible for later use, in this first part we let $\s$ denote a positive parameter. We let
\begin{equation*}
    \ms:=\begin{cases}
        \displaystyle\frac{1}{\s^d}\mathcal{L}^d & \qquad  \textit{(continuous case),} \\[7pt]
        \displaystyle\sum_{\alpha\in \s\ZZ^d}\delta_\alpha & \qquad \textit{(discrete case),}
    \end{cases}
\end{equation*}
and, given $\xi \in \mathbb{R}^{d}$ and $A\subseteq \rr^d$ a $\ms$-measurable set, we define 
\begin{equation}
    A_{\ms}(\xi):=\begin{cases}
        \{x\in A: x+\s\xi \in A\} & \text{ if } \ms=\displaystyle\frac{1}{\s^d}\mathcal{L}^d \qquad  \textit{(continuous case),} \\[7pt]
        \{x\in A: [x,x+\s\xi]\subset A\} & \text{ if } \displaystyle \ms=\sum_{\alpha\in \s\ZZ^d}\delta_\alpha  \qquad \textit{(discrete case).}
    \end{cases}
    \label{set}
\end{equation}
 In the continuous case, $A_{\ms}({\xi})=A\cap(A-\s\xi)$. In the discrete case, $A_{\ms}(\xi)$ is only contained in $A\cap(A-\s\xi)$ and these two sets coincide if $A$ is convex. 

 In the discrete case, a $\ms$-measurable function $u$ on $A$ is a function defined on the restriction of the lattice $u:A\cap\s\ZZ^d\to \rr^m$. It is not restrictive to extend such a function to the whole lattice $\s\ZZ^d$ and to identify it with its piecewise constant interpolation on the cells that intersect $A\cap\s\ZZ^d$; that is,
 \begin{equation*}
     u: \mathbb{R}^d\rightarrow \mathbb{R}^{m}, \qquad u(x)=u(\alpha) \text{ if } x\in \alpha +[0,\s)^{d},\alpha \in A\cap \s\ZZ^d.
 \end{equation*}
 The piecewise constant identification of $u$ is a $\mathcal{L}^d$-measurable function; hence, in the discrete setting, we define 
\begin{equation*}
    \mathcal{A}_{\ms}(A;\mathbb{R}^{m}):=\{u: \mathbb{R}^d\rightarrow \mathbb{R}^{m}: u(x)=u(\alpha) \text{ if } x\in \alpha +[0,\s)^{d},\alpha \in A\cap \s\ZZ^d \}\cap L^d(A;\rr^m).
\end{equation*}
In the continuous case, a $\ms$-measurable function on $A$ is simply a $\mathcal{L}^d$-measurable function on $A$. For mere notational consistency, we then let
\begin{equation*}
    \mathcal{A}_{\ms}(A;\mathbb{R}^{m}):=\{u:\rr^d\to\rr^m: u \text{ is }\mathcal{L}^d\text{-measurable}\}\cap L^d(A;\rr^m).
\end{equation*}
With these choices, we regard functions in  $\mathcal{A}_{\ms}(A;\mathbb{R}^{m})$ as functions in $L^d(A;\rr^m)$ for every $\s>0$ in both continuous and discrete case, making then possible the asymptotic analysis via $\Gamma$-convergence with respect to the strong $L^d$-topology. 

\smallskip

We let $f:\mathbb{R}^{d}\times \mathbb{R}^{m}\rightarrow[0,+\infty)$ be a function satisfying the following assumptions:
\begin{itemize}
    \item[(H)]$f(\xi, \cdot)$ is $d$-homogeneous for every $\xi\in\mathbb{R}^d$; i.e., $f(\xi,tz)=t^df(\xi,z)$ for every $t\geq 0, \xi \in \mathbb{R}^{d}$, and $z\in \mathbb{R}^{m}$; 
    \item[(G)]
    the functions $m(\xi):=\displaystyle\inf_{z \in \mathbb{S}^{m-1}} f(\xi,z)$ and $M(\xi):=\displaystyle \sup_{z \in \mathbb{S}^{m-1}} f(\xi,z)$ defined for every $\xi\in \rr^d$ satisfy: 
    \begin{itemize}
        \item[(G0)] there exist $C>0$ and $r_0>1$ such that $m(\xi)\geq  C$ for $\mu_1$-a.e. $\xi$ such that $|\xi|\leq r_0$,
        \item[(G1)] $\displaystyle\int_{\mathbb{R}^d}M(\xi)(1+|\xi|^d)\, d\mu_1(\xi)<+\infty$;
    \end{itemize}    
    \item[(L)] there exists a positive constant $C$ such that 
    \begin{equation*}
        |f(\xi,z)-f(\xi,z')|\leq CM(\xi)(|z|^{d-1}+|z'|^{d-1})|z-z'|
    \end{equation*}
    for every $\xi \in \mathbb{R}^{d}$ and for every $z, z' \in \mathbb{R}^{m}$.
\end{itemize}

\begin{remark}
\label{rmk: bound uniforme} We comment on the above hypotheses:
\begin{itemize}
    \item[(i)] assumption (H) yields $m(\xi)|z|^d\leq f(\xi,z)\leq M(\xi)|z|^d$ for every $(\xi,z)\in \mathbb{R}^d\times \mathbb{R}^m$;
    \item[(ii)] the constant $r_0$ in (G0) is assumed to be strictly larger than $1$ to deal with the discrete case. In such a case, the relevant values of the energy density $f$ are attained at $\ZZ^d\times \rr^m$; therefore, it is needed to require that $r_0>1$ in order to have a meaningful condition on the function $m$ defined in (G). In the continuous case, it would suffice to suppose $r_0>0$; we choose to assume that $r_0>1$ in order to keep our argument consistent in both frameworks;  
    \item[(iii)] in the discrete case, assuming without loss of generality that $M(0)<+\infty$, we have that (G1) is equivalent to
    \begin{equation*}
    \sum_{\xi\in\ZZ^d}M(\xi)|\xi|^d<+\infty,
    \end{equation*}
    which corresponds to the hypothesis (G1) in \cite{Fus}, up to adapting the notation;
    \item[(iv)] hypothesis (L) is satisfied if $f$ fulfills (H), (G), and $f(\xi,\cdot)$ is convex for every $\xi \in \mathbb{R}^{d}$. 
\end{itemize}
    
\end{remark}

We introduce the family of unconstrained functionals and, for later convenience, we also highlight the dependence on the domain. For fixed $\s>0$ and $A\subseteq\rr^d$ a $\ms$-measurable set, we let $\mathcal{F}_\s(\cdot, A):L^d(A;\mathbb{R}^{m})\rightarrow [0,+\infty]$ be defined as
\begin{equation} \label{funzionali unconstrained}
\mathcal{F}_\s(u,A):=\begin{cases}
\displaystyle\int_{\mathbb{R}^{d}}\int_{A_{\ms}(\xi)}f(\xi, u(x+\s\xi)-u(x))\, d\ms(x)\, d\mu_1(\xi) & \text{ if } u\in\A_{\ms}(A;\rr^m),\\[3pt]
+\infty & \text{ otherwise}.
\end{cases}
\end{equation}
For the sake of notation, if $A=\Omega$ we simply write $\mathcal{F}_\s(u)$ in place of $\mathcal{F}_\s(u,\Omega)$. 

The asymptotic analysis of the above energies and the convergence of minimum problems are obtained in \cite[Theorem 6.1, Proposition 6.4]{AABPT} for the continuous case and in \cite[Theorem 4.1, Corollary 4.6]{AliCic} for the discrete case. We reformulate these results in a unified statement suited to our purposes.

\begin{theorem}\label{theorem: Gamma unconstrained}
    Let $A$ be a bounded open set with Lipschitz boundary and let 
    \begin{equation*}
 \ms=
        \frac{1}{\s^d}\mathcal{L}^d \qquad \text{ or } \qquad \ms=
        \sum_{\alpha\in \s\ZZ^d}\delta_\alpha
\end{equation*}
 for every $\s>0$. Let $\{\mathcal{F}_\s(\cdot,A)\}_\s$ be defined by \eqref{funzionali unconstrained} with $f$ satisfying assumptions {\rm (H)} and {\rm (G)}. Then, $\{\mathcal{F}_\s(\cdot, A)\}_\s$ $\Gamma$-converges with respect to the $L^{d}(A;\mathbb{R}^{m})$-topology as $\s\to0$ to the functional $\mathcal{F}(\cdot,A):L^d(A;\mathbb{R}^{m})\rightarrow [0,+\infty]$ defined as 
    \begin{equation*}\mathcal{F}(u,A):=
        \begin{cases}
            \displaystyle
            \int_{A}f_{\hom}(\nabla u)\, dx &\text{if}\ u \in W^{1,d}(A;\mathbb{R}^m),\\
            +\infty &\text {otherwise},
        \end{cases}
    \end{equation*}
    where $f_{\hom}:\mathbb{R}^{m\times d}\rightarrow [0,+\infty)$ is  given by the following homogenization formula
    \begin{equation}
       f_{\hom}(M):=\lim_{h \rightarrow +\infty}\frac{1}{h^{d}}\inf\Big\{\int_{Q(h)}\int_{Q(h)}f(y-x,v(y)-v(x))\, d\mu_1(x)\, d\mu_1(y) : v\in \mathcal{A}^M_{\mu_1}(Q(h))\Big\},
        \label{f_hom}
    \end{equation}
    with 
    \begin{equation}\label{A^M}
        \mathcal{A}^M_{\mu_1}(Q(h)):=\{u \in \mathcal{A}_{\mu_1}(Q(h);\mathbb{R}^{m}): u(x)=Mx \text{ for } \mu_1\text{-a.e. } x \in Q(h),  \text{\rm dist}_\infty(x, Q(h)^c)\leq 1\}
    \end{equation}
    for every $M\in\rr^{m\times d}$. In particular, $f_{\rm hom}$ is a $d$-homogeneous quasiconvex function and there exists a positive constant $C$ such that, for every $M \in \mathbb{R}^{m \times d}$,
    \begin{equation}
    \label{crescita dell' omogenizzata}
        \frac{1}{C}|M|^{d}\le f_{\hom}(M)\le C|M|^{d}.
    \end{equation}
    Moreover, for any $g:\mathbb{R}^d\to\mathbb{R}^m$ Lipschitz continuous function and $T>0$ natural, there holds 
    \begin{equation*}
        \lim_{\s\to 0}\inf\{\mathcal{F}_\s(u,A):u \in \mathcal{A}_{\ms}^{\s T,g}(A)\}=\min\Big\{\int_{A}f_{\hom}(\nabla u(x))\, dx: u-g \in W^{1,d}_0(A;\mathbb{R}^{m})\Big\},
    \end{equation*}
    where 
    \begin{equation}\label{A^sT}
        \mathcal{A}_{\ms}^{\s T,g}(A):=\{u \in \mathcal{A}_{\ms}(A;\rr^m) : u(x)=g(x) \text{ for } \ms \text{-a.e. } x \in A, \text{\rm dist}_\infty(x, A^c)\leq  \s T\}.
    \end{equation}
\end{theorem}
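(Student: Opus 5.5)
This statement is essentially a unified restatement of known results, so the plan is to deduce it from \cite[Theorem 6.1, Proposition 6.4]{AABPT} in the continuous case and from \cite[Theorem 4.1, Corollary 4.6]{AliCic} in the discrete case. The main work is checking that our hypotheses and our notation fit theirs.

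\emph{Continuous case.} Take $\ms=\s^{-d}\mathcal{L}^d$ and substitute $y=x+\s\xi$. This shows that $\mathcal{F}_\s(u,A)$ equals
\begin{equation*}
\frac{1}{\s^{2d}}\int_{A\times A}f\Bigl(\frac{y-x}{\s},u(y)-u(x)\Bigr)\,dx\,dy,
\end{equation*}
which is the convolution-type energy of \cite{AABPT} at exponent $p=d$. Their growth conditions follow from our assumptions:
\begin{itemize}
\item the upper bound $f(\xi,z)\le M(\xi)|z|^d$ with $M$ summable against $1+|\xi|^d$ comes from (H) and (G1), via Remark \ref{rmk: bound uniforme}(i);
\item the lower bound $f(\xi,z)\ge C|z|^d$ for $|\xi|\le r_0$ comes from (G0).
\end{itemize}
Their theorem then gives the $\Gamma$-limit with density $f_{\hom}$ defined by a cell formula on cubes $Q(h)$ with affine boundary data in a boundary layer. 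Their formula differs from \eqref{f_hom} only in normalization and in the width of the boundary layer. I would reconcile the two by noting that the limit in $h$ does not depend on a fixed layer width. The comparison uses a cut-off argument in which the long-range tail is controlled by (G1). The properties of $f_{\hom}$ are also inherited from their results. Quasiconvexity follows because $f_{\hom}$ is the density of a $\Gamma$-limit of integral type. The $d$-homogeneity follows by scaling $v\mapsto tv$ in \eqref{f_hom} together with (H). The bounds \eqref{crescita dell' omogenizzata} follow from (G0) (lower) and (G1) (upper, using the test function $v=Mx$).

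\emph{Discrete case.} Take $\ms=\sum_\alpha\delta_\alpha$ and write $\beta=\alpha+\s\xi$. The energy becomes $\sum_{\xi}\sum_{\alpha:[\alpha,\alpha+\s\xi]\subset A}f(\xi,u(\alpha+\s\xi)-u(\alpha))$. This is exactly the Alicandro--Cicalese functional with scaling $\s^{d-p}=1$ at $p=d$. Their hypotheses are the summability $\sum_\xi M(\xi)|\xi|^d<\infty$, which is Remark \ref{rmk: bound uniforme}(iii), and coercivity on nearest neighbours, which is (G0) with $r_0>1$. Their Theorem 4.1 gives the $\Gamma$-limit under piecewise-constant identification in the $L^d$ topology. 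Here the cell formula \eqref{f_hom} with $\mu_1$ counting measure matches theirs directly.

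\emph{Convergence of minima with boundary data.} For the last assertion I would invoke \cite[Proposition 6.4]{AABPT} and \cite[Corollary 4.6]{AliCic}. These give convergence of minimum problems with Lipschitz boundary datum $g$ imposed on a layer of width $\s T$. I expect this to be the main obstacle, because the classes $\mathcal{A}^{\s T,g}_{\ms}(A)$ in \eqref{A^sT} may differ from the cited ones in layer width, in whether $g$ is imposed inside or outside $A$, and in how the constraint reads for the discrete interpolation. To handle this I would run the standard argument directly.
\begin{itemize}
\item \emph{Lower bound.} Extend $u$ by $g$ to a slightly larger Lipschitz set $A'\supset\overline{A}$. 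Use equicoercivity, which comes from (G0) and a Poincaré-type compactness, together with the unconstrained liminf inequality on $A'$. The limit $u$ then satisfies $u=g$ on $A'\setminus A$, so $u-g\in W^{1,d}_0$.
\item \emph{Upper bound.} Take a recovery sequence for a minimizer. Modify it near $\partial A$ with a cut-off joining it to $g$ inside the layer. The standard fundamental estimate with long-range tails shows this costs an error that vanishes as $\s\to0$.
\end{itemize}
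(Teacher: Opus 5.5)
Your proposal takes the same approach as the paper, which also obtains this theorem only by citing \cite[Theorem 6.1, Proposition 6.4]{AABPT} and \cite[Theorem 4.1, Corollary 4.6]{AliCic} and rewriting them in the unified $\mu_\s$-notation; your hypothesis checks, such as matching the Alicandro--Cicalese summability condition to Remark \ref{rmk: bound uniforme}(iii), are the right ones. One caution on your fallback argument for the boundary-value problem: apply the liminf inequality on $A$ itself and use the extension by $g$ to $A'$ only to identify the trace, because working on $A'$ would force you to subtract $\lim_{\s\to0}\mathcal{F}_\s(g,A'\setminus A)$, which for non-convex $f(\xi,\cdot)$ can be strictly larger than $\int_{A'\setminus A}f_{\hom}(\nabla g)\,dx$.
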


\begin{remark}
If $f(\xi,\cdot)$ is convex for every $\xi \in \mathbb{R}^{d}$, the asymptotic formula \eqref{f_hom} reduces to
 \begin{equation*}
     f_{\hom}(M)=\int_{\mathbb{R}^d}f(\xi,M\xi)\, d\mu_1(\xi);
 \end{equation*}
see \cite[Theorem 6.2]{AABPT}.
\end{remark}

Now we define the functionals of our interest by imposing Dirichlet boundary conditions on an array of perforations. We let $\e$ be a positive reference parameter (that shall tend to $0$) and we introduce two further positive vanishing parameters assuming that both are ruled by $\e$: $\delta=\delta_\e$ denotes the period of the centers of the perforations, and $r=r_\e$ denotes the side-length of the cubic perforations.

For the sake of simplicity we suppose that
\begin{equation*}
\frac{\delta_{\varepsilon}}{\e}\in \mathbb{N}\quad \text{ for every } \e>0.
\end{equation*}
This hypothesis is convenient for the analysis of the discrete case as it implies that the lattice $\delta_\e\ZZ^d$ is coarser than $\e \ZZ^d$, so that the perforations are centered on the support of the atomic measure $\mu_\e$; however, it is not needed for the analysis in the continuous framework. For every $\e>0$, we define the array of perforations as 
\begin{equation*}
    P_{\e}:=\bigcup_{i\in \mathbb{Z}^{d}}\overline{Q}(i\delta_\e,r_{\e})\end{equation*}
and we define the domain of our energy as 
\begin{equation*}
     \mathcal{D}_\e(\Omega;\rr^m):=\{u\in \A_{\mu_\e}(\Omega;\rr^m) : u(x)=0\text{ for } \mu_\e\text{-a.e. } x\in P_\e\}.
\end{equation*}
Then, we define functionals $F_{\varepsilon}:L^d(\Omega;\mathbb{R}^m)\to[0,+\infty]$ as 
\begin{equation}
\label{funzionali}
    F_{\e}(u):=
    \begin{cases}
\displaystyle\int_{\mathbb{R}^{d}}\int_{\Omega_{\mu_\e}(\xi)}f(\xi, u(x+\e\xi)-u(x))\, d\mu_\e(x)\, d\mu_1(\xi) &\text{if}\  u \in  \mathcal{D}_\e(\Omega;\rr^m), \\
        +\infty &\text{otherwise}.
    \end{cases}
\end{equation}
Finally, we state our main result.

\begin{theorem}\label{thm: main discreto} Let 
    \begin{equation*}
 \mu_\e=
        \frac{1}{\e^d}\mathcal{L}^d \qquad \text{ or } \qquad \mu_\e=
        \sum_{\alpha\in \e\ZZ^d}\delta_\alpha
\end{equation*}
 for every $\e>0$ and let $\{F_{\varepsilon}\}_\e$ be defined by \eqref{funzionali} with $f$ satisfying assumptions {\rm (H), (G)}, and {\rm (L)}. Consider two families of positive parameters $\{r_\e\}_\e,\{\delta_\e\}_\e$ such that 
 \begin{equation*}
     \lim_{\e\to0}r_\e= \lim_{\e\to0}\delta_\e=0
 \end{equation*}
and assume that 
\begin{equation*}
    \lim_{\e\to0}\frac{\e}{r_{\e}}=0
\end{equation*}
and that there exists $\gamma>0$ such that
\begin{equation*}
    \lim_{\e\to0} \frac{\exp(-(\gamma\delta_\e^d)^\frac{1}{1-d})}{r_\e}=1.
\end{equation*}
Then, the functionals $\{F_{\varepsilon}\}_\e$ $\Gamma$-converges with respect to the $L^{d}(\Omega;\mathbb{R}^{m})$-topology as $\e\to0$ to the functional $F:L^d(\Omega;\mathbb{R}^{m})\rightarrow [0,+\infty]$ defined as 
\begin{equation}\label{main Gamma}
    F(u):=
        \begin{cases}
            \displaystyle
            \int_{\Omega}f_{\hom}(\nabla u)\, dx+\gamma\int_\Omega\f(u)\, dx &\text{if}\ u \in W^{1,d}(\Omega;\mathbb{R}^m),\\
            +\infty &\text {otherwise},
        \end{cases}
    \end{equation}
    where $f_{\hom}$ is given by \eqref{f_hom} and $\f$ is a convex function that, for every $z\in\rr^m$, is given by
\begin{equation}\label{phi}
    \f(z):=\lim_{L\to+\infty}(\log L)^{d-1}\min\Bigl\{ \int_{Q(L)}f_{\rm hom}(\nabla v)\, dx : v-z\in W^{1,d}_0(Q(L);\mathbb{R}^m), v= 0 \text{ on } Q(1)\Bigr\}.
\end{equation}
\end{theorem}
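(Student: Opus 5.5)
The plan is to prove the theorem in four steps: properties of $\f$, compactness, the liminf inequality, and the limsup inequality. Throughout, Theorem \ref{theorem: Gamma unconstrained} handles the bulk energy, and a separate capacitary analysis handles the energy near the perforations.

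\emph{Step 1: properties of $\f$.} First I show that the limit in \eqref{phi} exists and that $\f$ is $d$-homogeneous, locally Lipschitz, with $\frac1C|z|^d\le\f(z)\le C|z|^d$. The bounds come from \eqref{crescita dell' omogenizzata} and \eqref{stima uniforme capacita}. Existence of the limit comes from a subadditivity-in-scales argument. Call the capacitary minimum $\f_L(z)$. Comparing $Q(L^{k})$ with concentric annuli $Q(L^{j+1})\setminus Q(L^{j})$, and gluing by cut-offs on thin logarithmic layers, controls $\liminf$ against $\limsup$.

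For convexity I would use the dyadic (multiscale) construction. Split $Q(L)\setminus Q(1)$ into $N$ logarithmically equal annuli $Q(L^{(k+1)/N})\setminus Q(L^{k/N})$. On each annulus a capacitary profile joining $0$ to $z$ costs about $N^{d-1}(\log L)^{1-d}\f(z)$. Given $z=\sum_k\lambda_k z_k$, which may be taken rational, interpolate between values $w_0=0,w_1,\dots$ in which the increments are chosen proportionally. The total cost then becomes a Jensen-type average, and by homogeneity it yields $\f(\sum\lambda_kz_k)\le\sum\lambda_k\f(z_k)$.

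\emph{Step 2: compactness and domain.} If $F_\e(u_\e)$ is bounded, Theorem \ref{theorem: Gamma unconstrained}, through its compactness part as in \cite{AABPT,AliCic}, gives $u\in W^{1,d}$. The discrete Poincaré-type inequalities of the appendix are used here.

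\emph{Step 3: liminf inequality.} Fix $\eta\in(0,1)$. Around each perforation, consider the annulus $Q(i\delta_\e,\delta_\e^{\,\cdot})\setminus Q(i\delta_\e,r_\e)$, whose inner and outer radii are $r_\e$ and roughly $\rho_\e=\eta\delta_\e$.
\begin{itemize}
\item Using the joining lemma on varying domains, modify $u_\e$ near the outer boundary of these annuli so that it equals its mean $z_i$ there. This costs $o(1)$ plus an error controlled via Poincaré–Wirtinger and (L), with long-range interactions handled by the estimates of Section \ref{Sec: preliminary results}. Choosing the boundary layer among $\sim\log$ many candidate layers by averaging makes the error small.
\item On each annulus, the energy is then bounded below by a non-local capacitary problem at scale $\e\ll r_\e$. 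After rescaling by $r_\e$, this becomes a problem on $Q(L_\e)$ with $L_\e=\rho_\e/r_\e$ and non-local scale $\e/r_\e\to0$. I must show that it is asymptotically $\ge(\log L_\e)^{1-d}(\f(z_i)-o(1))$.
\item Since $(\log L_\e)^{1-d}\sim\gamma\delta_\e^d$, summing over $i$ gives the Riemann sum $\gamma\int\f(\bar u_\e)$. Continuity of $\f$ then yields $\gamma\int\f(u)$.
\item The bulk part outside the annuli is treated by Theorem \ref{theorem: Gamma unconstrained}, and separation holds because the annuli have vanishing volume.
\end{itemize}

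\emph{Step 4: limsup inequality.} By density, take $u$ smooth. Use the recovery sequence for $f_{\hom}$ away from the perforations. Near each center, insert a rescaled almost-minimizer of the discretized capacitary problem. The convergence-of-minima statement of Theorem \ref{theorem: Gamma unconstrained} with boundary datum at scale $\s T$ provides this almost-minimizer, with value $u(i\delta_\e)$. The pieces are joined by the joining lemma.

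\emph{Main obstacle.} The key difficulty is the approximation of the capacitary densities: showing that the non-local capacitary minima on $Q(L)\setminus Q(1)$ at scale $\s=\e/r_\e$ satisfy $(\log L)^{d-1}\cdot\min\to\f(z)$ uniformly as $L\to\infty$ and $\s\to0$ jointly. Scaling invariance prevents applying Theorem \ref{theorem: Gamma unconstrained} on a fixed domain. My first attempt is to decompose $Q(L)$ into $N$ logarithmic layers. On each layer, rescaled to unit size and with the non-local scale still small, the homogenization limit applies. The lower bound is then recovered from the multiscale structure in Step 1, using convexity to redistribute the values among layers. Since long-range interactions across layers are negligible by (G1), the layers can be treated separately.
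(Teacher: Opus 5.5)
Your overall architecture is close to the paper's: convexity of $\varphi$ via a multiscale construction, a joining lemma, fixed-ratio capacitary problems, and a Riemann sum for the strange term. However, the lower bound near the perforations has a genuine gap, exactly at the point you flag as the main obstacle.

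After imposing the mean $z_i$ only at the outer boundary, you are left with a single non-local capacitary problem on $Q(L_\varepsilon)\setminus Q(1)$ with $L_\varepsilon\to+\infty$. Neither version of your layer decomposition gives the required bound. With $N$ fixed layers, each layer still has diverging ratio $L_\varepsilon^{1/N}$, so Theorem \ref{theorem: Gamma unconstrained} does not apply on it. With a diverging number $S_\varepsilon\sim\log(\delta_\varepsilon/r_\varepsilon)/\log 2^N$ of frames of fixed ratio $2^N$, your function has no prescribed values on the intermediate interfaces, so there is no Dirichlet problem to compare each frame with. Decoupling the frame energies through (G1) (in the paper: truncation at range $T$ plus Proposition \ref{proposition: gamma_troncamento}) separates the energies but does not supply boundary data.

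The missing idea is what the paper's joining lemma actually does. It is applied simultaneously on all $S_j$ dyadic frames of every cell. In each frame $h$ it selects one of $k$ candidate sub-annuli, where $w_j$ is set equal to the frame mean $u_j^{i,h}$. The total cost is $C/k$, because the enlarged sets $E^{i,h}_j$ are pairwise disjoint and the Poincar\'e--Wirtinger constant on the fixed reference annulus is scale invariant. Only then is the cell energy bounded below by $\sum_h\varphi^T_{1,2^{N+k},\sigma_j^h}(u_j^{i,h+1}-u_j^{i,h})$, with every $\sigma_j^h\le 2^k\varepsilon_j/r_j$.

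Uniformity over the diverging number of frames then comes from $d$-homogeneity. It gives $\varphi^T_{1,2^{N+k},\sigma}/\varphi^T_{2^{N+k}}\to 1$ uniformly on $\mathbb{R}^m\setminus\{0\}$, which is a relative error. This matters, because an additive error would be multiplied by $S_j$. Convexity and homogeneity of $\varphi^T$ then give $S_j^{1-d}\varphi^T(u_j^{i,S_j})$. The outer-boundary joining you describe is only the last of these frames.

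Two further points need fixing. First, the claim that separation holds because the annuli have vanishing volume is false for $\rho_\varepsilon=\eta\delta_\varepsilon$: their volume fraction is $\eta^d$, and the paper's $E_j$ also fills a fixed fraction of each cell. The paper still recovers the whole bulk term, as follows.
\begin{enumerate}
\item It replaces $w_j$ by the constant $u_j^{i,S_j}$ on each $Q(i\delta_j,\rho_j^{i,S_j})$, which leaves the energy outside $E_j$ unchanged.
\item It shows the new sequence still converges to $u$, using the identity with the periodic cut-off $1-\chi(\cdot/\delta_j)$ and its weak$^*$ limit.
\item It then applies the unconstrained $\Gamma$-liminf.
\end{enumerate}
Second, in your limsup, Theorem \ref{theorem: Gamma unconstrained} does not provide an almost-minimizer on $Q(L_\varepsilon)\setminus Q(1)$ when $L_\varepsilon\to+\infty$. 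The paper instead glues $S_j$ fixed-ratio almost-minimizers with equal increments $\widetilde{u}_j^{i,S_j}/S_j$. There no joining on intermediate frames is needed, and homogeneity alone suffices.
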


\begin{remark}
We choose cubic perforations since they are naturally compatible with the cubic lattices used in the discrete setting. This choice is not restrictive since, by a comparison argument, the same conclusion holds true if we replace the reference cube with any bounded set having nonempty interior.    
\end{remark}

{\bf Outline of the proof and convexity of the strange term.} We conclude this section by illustrating in broad terms the idea of our proof and how this led us to observe, somewhat indirectly, the convexity of the strange term. Our argument is based on a new variant of the `joining lemma on varying domains' (see Lemma \ref{lemma: joining}) devised by Ansini and Braides in \cite{AnsBraJMPA}. It allows one to modify a converging family $\{u_\e\}_\e$ (at a small energetic cost) by imposing constant boundary conditions on several dyadic concentric annuli surrounding the perforations. In this way, one can isolate the contributions to the energy arising from the perforations and the remaining region. The latter is essentially the unconstrained energy \eqref{funzionali unconstrained} and it contributes to the limit energy yielding the first-order term in \eqref{main Gamma}. 

To estimate the energy due to a single perforation, that we assume to be centered at $0$ for simplicity, we perform a dyadic construction: we fix a large $L\in \mathbb N$ and cover the reference cell $Q(\delta_\e)$ by means of  concentric cubes
\begin{equation*}
    Q(r_\varepsilon),Q(r_\varepsilon L), Q(r_\varepsilon L^2),\dots, Q(r_\varepsilon L^{S})\sim Q(\delta_\varepsilon), \qquad S\sim \tfrac{\log (\delta_\varepsilon/r_\varepsilon)}{\log L}.
\end{equation*}
Thanks to an argument that allows us to consider only finite-range interactions, we decompose the energy close to the perforation on each ``frame" as follows:
\begin{equation*}
    F_{\varepsilon}(u_\varepsilon, Q(\delta_\varepsilon)) \sim  \sum_{h=1}^S \F_{\varepsilon}(u_\e, Q(r_\varepsilon L^{h})\setminus Q(r_\varepsilon L^{h-1})).
\end{equation*}
By the joining lemma, we can assume that there exist constants $\{u_\e^{0,h}: h\in\{0,\dots,S\}\}$ such that
\begin{equation*}
    u_\varepsilon=u_\varepsilon^{0,h} \text{ close to } \partial Q(r_\varepsilon L^{h}) \qquad \text{ for every } h\in\{0,\dots, S\},
\end{equation*}
with $u_\e^{0,0}=0$ and, taking into account such boundary conditions, we can optimize the energy on $Q(\delta_\e)$ to obtain
\begin{align*}
    F_{\varepsilon}(u_\varepsilon, Q(\delta_\varepsilon)) & \gtrsim  \sum_{h=1}^S \min\{\F_{\varepsilon}(u, Q(r_\varepsilon L^{h})\setminus Q(r_\varepsilon L^{h-1})) : \\& \quad \quad u=  u_\varepsilon^{0,h} \text{ close to }\partial Q(r_\varepsilon L^{h}),  u=u_\varepsilon^{0,h-1} \text{ close to }\partial Q(r_\varepsilon L^{h-1})\}.
\end{align*}
On a single frame $Q(r_\varepsilon L^{h})\setminus Q(r_\varepsilon L^{h-1})$, we use the scaling invariance of the energy and the convergence of minima in Theorem \ref{theorem: Gamma unconstrained} to get that
\begin{align}\notag
          &\min\{\F_{\varepsilon}(u, Q(r_\varepsilon L^{h})\setminus Q(r_\varepsilon L^{h-1})) : u=  u_\varepsilon^{0,h} \text{ close to }\partial Q(r_\varepsilon L^{h}), u= u_\varepsilon^{0,h-1} \text{ close to }\partial Q(r_\varepsilon L^{h-1})\} \\ \notag
          & =\min\{\F_{\varepsilon/(r_\e L^{h-1})}(u, Q(L)\setminus Q(1)) :   u=  u_\varepsilon^{0,h} \text{ close to }\partial Q(L), u= u_\varepsilon^{0,h-1} \text{ close to }\partial Q(1)\} \\ \label{perf: heur}
          & \sim\min\Bigl\{\int_{Q(L)} f_{\rm hom}(\nabla u)\,dx : u-(u_\varepsilon^{0,h}-u_\varepsilon^{0,h-1})\in W^{1,d}_0(Q(L); \mathbb R^m),  u=0 \text{ on } Q(1)\Bigr\}
     \end{align}
as $\e\to0$. Note that here we made use of the assumption $\e/r_\e\to0$ and of a uniform-convergence argument that allows one to keep the boundary conditions unaltered in spite of the fact that these actually depend on $\e$. Recalling \eqref{phi}, we have that \eqref{perf: heur} is approximately
    \begin{equation*}
         (\log L)^{1-d}\varphi(u_\varepsilon^{0,h}-u_\varepsilon^{0,h-1}).
    \end{equation*}

Since $S\sim \frac{\log (\delta_\varepsilon/r_\varepsilon)}{\log L}$ and $r_\varepsilon \sim\exp(-\delta_\varepsilon^{d/(1-d)})$, we have that $(\log L)^{1-d}\sim \delta_\varepsilon^dS^{d-1}$; hence, summing up over the frames, we have
\begin{equation*}
      F_{\varepsilon}(u_\varepsilon, Q(\delta_\varepsilon))  \gtrsim \delta_\varepsilon^dS^{d-1}\sum_{h=1}^S \varphi(u_\varepsilon^{0,h}-u_\varepsilon^{0,h-1}).
      \end{equation*}   
Upon observing that
\begin{equation*}
    \sum_{h=1}^S(u_\varepsilon^{0,h}-u_\varepsilon^{0,h-1})=u_\varepsilon^{0,S}-u_\varepsilon^{0,0}=u_\varepsilon^{0,S},
\end{equation*}
we can further optimize the previous sum and use the $d$-homogeneity of $\varphi$ to get
\begin{equation*}
    F_{\varepsilon}(u_\varepsilon, Q(\delta_\varepsilon)) \gtrsim \delta_\varepsilon^d\min\Bigl\{ S^{d-1}\sum_{h=1}^S \varphi(y_h): \sum_{h=1}^Sy_h=u_\varepsilon^{0,S}\Bigr\} = \delta_\varepsilon^d \min\Bigl\{\frac{1}{S}\sum_{h=1}^S \varphi(y_h): \frac{1}{S}\sum_{h=1}^Sy_h=u_\varepsilon^{0,S}\Bigr\}.
\end{equation*}
The latter term is seen to be approximately $\delta_\varepsilon^d\varphi^{**}(u_\varepsilon^{0,S})$ as $S\to+\infty$, where $\f^{**}$ denotes the convex envelope of $\f$. Then, summing up over all the perforations and using that $u_\e\to u$, we infer that 
\begin{equation*}
   F_{\varepsilon}\Bigl(u_\varepsilon, \bigcup_{i\in \mathbb Z^d \cap\delta_\varepsilon^{-1}\Omega} Q(i\delta_\varepsilon, \delta_\varepsilon)\Bigr)\gtrsim \sum_{i\in \mathbb Z^d \cap\delta_\varepsilon^{-1}\Omega}  \delta_\varepsilon^d\varphi^{**}(u_\varepsilon^{i,S}) \sim \int_{\Omega}\varphi^{**}(u)\, dx
\end{equation*}
as $\e\to0$ and $S\to+\infty$.

The heuristic that yields this lower bound (that is seen to be optimal) seems to be in contrast with the separation of scales argument leading to \eqref{main Gamma} and, to some extent, hints that $\varphi$ is convex. Conscious of this fact, we directly prove that $\f$ coincides with its convex envelope starting from its very definition (see Proposition \ref{prop: convexity}).

\section{Preliminary results}
\label{Sec: preliminary results}

In this section we collect some preliminary results that are instrumental for our analysis.   

\subsection{Auxiliary reference energies}
Given $r_0$ as in (G0), for every $\s>0$, $A \subseteq \mathbb{R}^{d}$ a $\ms$-measurable set, and $u \in \A_{\ms}(A ;\mathbb{R}^{m})$, we define the reference energies
\begin{equation}\label{Funzionali ausiliari G}
G_\s(u,A):=\int_{B(r_0)}\int_{A_{\ms}(\xi)}
|u(x+\s\xi)-u(x)|^d\,d\ms(x) \, d\mu_1(\xi),
\end{equation}
with $A_{\ms}(\xi)$ as in \eqref{set}. In the continuous case, such functionals can be rewritten as
\begin{equation*}
        \int_{B(r_0)}\int_{A\cap (A-\s\xi)}
\Bigl|\frac{u(x+\s\xi)-u(x)}{\s}\Bigr|^d\,dx \, d\xi;
    \end{equation*}
in the discrete case, under the additional (and not restrictive) assumption $r_0<2$, these can be rewritten as 
\begin{equation*}
    \sum_{k=1}^d \sum_{\substack{\alpha\in A\cap\s\ZZ^d \\ [\alpha,\alpha+\s e_k]\subset A}} |u(\alpha+\s e_k)-u(\alpha)|^d.
\end{equation*}
From now on, we assume without loss of generality that $r_0\in(1,2)$, so that the above energies account for nearest neighborhood interactions in the discrete framework. 

Concerning these auxiliary functionals, we first recall a lemma that allows us to control interactions of any range with short-range interactions only. This result is obtained recasting the corresponding results in the continuous case (see \cite[Lemma 4.1]{AABPT}) and in the discrete one (see \cite[Lemma 3.6]{AliCic}). 

\begin{lemma}\label{lemma: controllo long range on frames}
There exists a positive constant $C$ such that for every $\Omega$ open set, $A\subseteq \Omega$ bounded open set with Lipschitz boundary, $T>0$, $\xi \in B(T)$, and $\s>0$ such that 
\begin{equation}\label{hp: long range}
    \text{\rm dist}_\infty(A, \Omega^c)>(T+3\sqrt{d})\s,
\end{equation}
it holds
\begin{equation*}
    \int_{A}|u(x+\s\xi)-u(x)|^d\,d\ms(x)\leq C(1+|\xi|^d)G_\s(u,A+Q((T+3\sqrt{d})\s))
\end{equation*}
for every $u \in \A_{\ms}(\Omega ;\mathbb{R}^{m})$. 
\end{lemma}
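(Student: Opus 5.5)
The plan is to prove the estimate by splitting the long jump $\s\xi$ into a chain of short increments, each of length below $r_0$. In both settings the integral on the right-hand side involves only short-range interactions: the jump $\s\xi$ is decomposed into $N\sim C(1+|\xi|)$ jumps of size $\s\eta$ with $|\eta|<r_0$. Telescoping and Jensen (or the discrete convexity inequality $|\sum_{k=1}^N a_k|^d\le N^{d-1}\sum_{k=1}^N|a_k|^d$) then produce the factor $N^{d}\lesssim 1+|\xi|^d$ once we average appropriately.

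In the continuous case, fix $\xi\in B(T)$. Set $N:=\lceil 2|\xi|/r_0\rceil+1$ and write $u(x+\s\xi)-u(x)=\sum_{k=1}^N\bigl(u(x+\tfrac{k}{N}\s\xi)-u(x+\tfrac{k-1}{N}\s\xi)\bigr)$. To obtain an integrated quantity over $B(r_0)$ rather than a single direction, we perturb the intermediate points: we insert auxiliary points $x+\tfrac{k}{N}\s\xi+\s w_k$ with $w_k\in B(r_0/4)$, where $w_0=w_N=0$. We then average over $(w_1,\dots,w_{N-1})$, so that each increment $\s(\xi/N+w_k-w_{k-1})$ has length less than $r_0\s$. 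After applying the convexity inequality and Fubini, each term reads $\int_{B(r_0/4)}\int_A|u(y+\s\eta)-u(y)|^d\,dy$ with $\eta$ ranging in a set of comparable measure inside $B(r_0)$. The change of variables $(x,w_k)\mapsto(y,\eta)$ has bounded Jacobian, with constants depending only on $d$ and $r_0$. All points stay within $\s(|\xi|+r_0)\le(T+3\sqrt d)\s$ of $A$, so by \eqref{hp: long range} they lie in $A+Q((T+3\sqrt d)\s)\subset\Omega$, and the resulting pairs belong to $(A+Q((T+3\sqrt d)\s))_{\ms}(\eta)$. Summing the $N$ terms against the prefactor $N^{d-1}$ yields $CN^d G_\s(u,A+Q(\cdot))$, which is bounded by $C(1+|\xi|^d)G_\s$.

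In the discrete case ($\xi\in\ZZ^d$, and $G_\s$ is the nearest-neighbour energy), we connect $\alpha$ to $\alpha+\s\xi$ by a monotone lattice path of $|\xi|_1$ nearest-neighbour steps, moving first along $e_1$, then $e_2$, and so on. This path stays in the $\sqrt d\s$-neighbourhood of the segment, hence in $A+Q((T+3\sqrt d)\s)$ by \eqref{hp: long range}, so every bond $[\beta,\beta+\s e_k]$ used is admissible. Convexity gives $|u(\alpha+\s\xi)-u(\alpha)|^d\le|\xi|_1^{d-1}\sum_{\text{bonds}}|u(\beta+\s e_k)-u(\beta)|^d$. Summing over $\alpha\in A\cap\s\ZZ^d$, each bond is counted at most $|\xi|_1$ times, once for each position along the path, since the path shape is translation invariant. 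This gives the bound $|\xi|_1^d G_\s$, which is at most $C(1+|\xi|^d)G_\s$.

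The main obstacle is the continuous averaging step. There one must check that the map to the variables $(y,\eta)$ covers $B(r_0)$ with bounded multiplicity, uniformly in $N$. This works because each increment involves a fresh independent variable $w_k$ (or $w_{k-1}$), so the Jacobian estimate is one-dimensional in that variable and does not depend on $N$. The case $|\xi|<r_0/2$ needs separate care, but a single intermediate point suffices there.
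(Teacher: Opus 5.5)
Your argument is correct in substance, but it takes a genuinely different route from the paper. The paper does not reprove any chaining estimate. In the continuous case it quotes \cite[Lemma 4.1]{AABPT}, which bounds the $\xi$-difference quotient on $A$ by $G_\sigma$ on the thin directional set $A+(0,\sigma)\xi+B(r_0\sigma)$. In the discrete case it quotes \cite[Lemma 3.6]{AliCic}, which bounds $\sum_{\alpha\in R^\xi_\sigma(E_\sigma)}|u(\alpha+\sigma\xi)-u(\alpha)|^d$ by $C|\xi|^dG_\sigma(u,E)$, where $E_\sigma$ is the inner $2\sqrt d\sigma$-parallel set of $E$. Its only work is checking that \eqref{hp: long range} gives the inclusions needed to apply these with $E=A+Q((T+3\sqrt d)\sigma)$.

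You instead reprove both estimates from scratch with a single mechanism: telescoping plus $|\sum_{k=1}^N a_k|^d\le N^{d-1}\sum_k|a_k|^d$. In the continuum you close it by averaging over independent perturbations: for fixed $w_{k-1}$ the map $(x,w_k)\mapsto(y,\eta)$ is a translation, so each term is at most $|B(r_0/4)|^{-1}G_\sigma$, uniformly in $N$. On the lattice you close it with a translation-invariant multiplicity count. This buys a self-contained proof with a constant depending only on $d$ and $r_0$. The cited lemmas are finer, since they only use a neighbourhood of the segment, which is what matters for non-convex sets. Here the whole neighbourhood of $A$ is available, which is exactly why your cruder coordinate path suffices.

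Two corrections are needed.

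First, the path that moves along $e_1$, then along $e_2$, and so on, does not stay in the $\sqrt d\sigma$-neighbourhood of $[\alpha,\alpha+\sigma\xi]$. For $\xi=(n,n)$ its corner lies at distance $n\sigma/\sqrt2$ from the segment. What is true, and all you need, is that the path stays in the coordinate box spanned by $\alpha$ and $\alpha+\sigma\xi$. Hence it lies within $\ell^\infty$-distance $\sigma|\xi|_\infty<T\sigma$ of $\alpha\in A$.

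Second, with the convention $Q(r)=(-r/2,r/2)^d$, the set $A+Q((T+3\sqrt d)\sigma)$ is only the $\ell^\infty$-neighbourhood of $A$ of radius $(T+3\sqrt d)\sigma/2$. So being within distance $(T+3\sqrt d)\sigma$ of $A$ does not place a point in it. The paper's proof makes the same factor-two slip. In fact, for $T>3\sqrt d$ the inequality as literally written is false: take $u=0$ on that set, $u$ a nonzero constant just outside it, and $\xi$ along $e_1$ with $|\xi|$ close to $T$. Both arguments go through verbatim with $A+Q(2(T+3\sqrt d)\sigma)$. This set is still inside $\Omega$ by \eqref{hp: long range}, and the change is harmless in the applications.
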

\begin{proof}
Assume that $\ms=\s^{-d}\mathcal{L}^d$ and let $A+(0,\s)\xi:=\{x+t\xi: x\in A, t\in(0,\s)\}$. By \eqref{hp: long range} and the fact that $r_0<2$ we have
    \begin{equation*}
        \text{\rm dist}(A+(0,\s)\xi, \Omega^c) > \text{\rm dist}(A+B(\s T), \Omega^c) > \text{\rm dist}_\infty(A+Q(\s T), \Omega^c) >3\sqrt{d}\s>r_0\s,
    \end{equation*}
    so that, applying \cite[Lemma 4.1]{AABPT}, we get
    \begin{equation*}
        \int_{A}\Bigl|\frac{u(x+\s\xi)-u(x)}{\s}\Bigr|^d\,dx\leq C(1+|\xi|^d)G_\s(u,A+(0,\s)\xi+B(r_0\s)),
    \end{equation*}
    and the conclusion follows since $A+(0,\s)\xi+B(r_0\s)\subset A+Q((T+3\sqrt{d})\s)$.
    
    Assume now that $\ms=\sum_{\alpha\in\s\ZZ^d}\delta_\alpha$ and, given $E\subset \Omega$, consider the sets 
\begin{equation*}
    E_{\s}:=\{x\in E: \text{ \rm dist}(x,E^c)>2\sqrt{d}\s\} \qquad \text{ and } \qquad R_\s^\xi(E):=\{\alpha\in E\cap  \s\ZZ^d: [\alpha, \alpha+\s\xi]\subset E\}.
\end{equation*} 
By \cite[Lemma 3.6]{AliCic}, we have 
\begin{equation*}
    \sum_{\alpha\in R_\s^\xi(E_\s)}|u(\alpha+\s\xi)-u(\alpha)|^d \leq C |\xi|^d G_\s(u,E).
\end{equation*}
The conclusion follows letting $E=A+Q((T+3\sqrt{d})\s)$. Indeed, it holds that
\begin{equation*}
    [x,x+\s\xi]\subset A+B((T+\sqrt{d})\s) \qquad \text{ for every } x\in A \text{ and } \xi\in B(T) 
\end{equation*}
and then, by \eqref{hp: long range}, it follows
    \begin{equation*}
        A\cap \s\ZZ^d\subseteq R_\s^\xi(A+B((T+\sqrt{d})\s))\subseteq R_\s^\xi((A+B((T+3\sqrt{d})\s))_\s) \subseteq R_\s^\xi((A+Q((T+3\sqrt{d})\s))_\s), 
    \end{equation*}
    which yields the thesis.
\end{proof}

\begin{remark}
    The above lemma also applies with $\Omega=\rr^d$. In particular, there exists a positive constant $C$ such that for every $\xi\in \rr^d$ and $\s>0$, it holds
\begin{equation}\label{long-range Rd}
    \int_{\rr^d}|u(x+\s\xi)-u(x)|^d\,d\ms(x)\leq C(1+|\xi|^d)G_\s(u,\rr^d)
\end{equation}
for every $u \in \A_{\ms}(\rr^d ;\mathbb{R}^{m})$.
\end{remark}

The following proposition is the rescaled version of Poincar\'e-Wirtinger inequality. We prove it in a slightly more general form in the Appendix (see Proposition \ref{Poincaré-Wirtinger inequality on union of rectangles}). In the continuous setting this can be inferred from \cite[Proposition 4.2]{AABPT}. In the discrete setting, we include a proof that also addresses a possible gap in the argument of \cite[Lemma 2]{BraSig}.

\begin{proposition} \label{prop: poincare-wirtinger} Let $0<\ell<L$, consider the set $A:=Q(L)\setminus \overline{Q}(\ell)$ or $A:=Q(L)$, and let $A'\subseteq A$ be a $\ms$-measurable set and $x_0\in \rr^d$. There exist positive constants $\s_{0}$ and $C$ depending on $A$ such that, having set 
\begin{equation*}
    (u)^{\ms}_E:=\frac{1}{\ms(E)}\int_{E} u\,d\ms
    \end{equation*}
    for every $E$ $\ms$-measurable set with $\ms(E)>0$, it holds 
\begin{equation*}
    \int_{x_0+\tau A} |u(x)-(u)^{\ms}_{x_0+\tau A'}|^d\,d\ms(x)\leq C\frac{\ms(x_0+\tau A)}{\ms(x_0+\tau A')}\Bigl(\frac{\tau}{\s}\Bigr)^d G_\s(u,x_0+\tau A)
\end{equation*}
    for every $\tau>0$, $\s\in(0,\s_0\tau)$, and $u \in \mathcal{A}_{\mu_\sigma}(x_0+\tau A;\mathbb{R}^m)$.  
\end{proposition}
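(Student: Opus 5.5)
First reduce to $x_0=0$, $\tau=1$ by scaling. The map $x\mapsto x_0+\tau x$ sends $\mu_\sigma$ to a multiple of $\mu_{\sigma/\tau}$. In the continuous case this is immediate. In the discrete case, a translate of the lattice $(\sigma/\tau)\mathbb Z^d$ appears, so every estimate below must be uniform with respect to translations of the lattice. The two sides of the inequality scale in the same way: the ratio $\mu_\sigma(x_0+\tau A)/\mu_\sigma(x_0+\tau A')$ is invariant, and the factor $(\tau/\sigma)^d$ becomes $(1/\sigma')^d$ with $\sigma'=\sigma/\tau$. It therefore suffices to prove, for $\sigma'\in(0,\sigma_0)$,
\[
\int_A|u-(u)^{\mu_{\sigma'}}_{A'}|^d\,d\mu_{\sigma'}\le C\frac{\mu_{\sigma'}(A)}{\mu_{\sigma'}(A')}\,\sigma'^{-d}\,G_{\sigma'}(u,A).
\]

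Next, replace the mean over $A'$ by the mean over $A$. By Jensen (or Hölder),
\[
|(u)_A-(u)_{A'}|^d\le \frac{1}{\mu(A')}\int_{A'}|u-(u)_A|^d\,d\mu\le\frac{1}{\mu(A')}\int_A|u-(u)_A|^d\,d\mu .
\]
Integrating over $A$ and using the triangle inequality bounds the left-hand side by $C\,(1+\mu(A)/\mu(A'))\int_A|u-(u)_A|^d\,d\mu$. Since $\mu(A)/\mu(A')\ge1$, this is at most $C\,\frac{\mu(A)}{\mu(A')}\int_A|u-(u)_A|^d\,d\mu$. So everything reduces to the standard Poincaré–Wirtinger inequality on $A$ with the mean over $A$:
\[
\int_A|u-(u)_A|^d\,d\mu_{\sigma}\le C\sigma^{-d}G_\sigma(u,A).
\]

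In the continuous case I would invoke \cite[Proposition 4.2]{AABPT}. It applies because $A$ is a connected Lipschitz set (a cube or a cubic annulus), and $G_\sigma$ is exactly the nonlocal seminorm appearing there, with kernel the indicator of $B(r_0)$. In the discrete case I would prove it by hand, in three steps.

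First, write $A$ as a finite union of rectangles $R_1,\dots,R_N$, for instance the $3^d-1$ boxes of a grid for the annulus. Consecutive rectangles should overlap in a set of measure comparable to a rectangle, and the adjacency graph should be connected.

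Second, prove the inequality on a single rectangle $R$. For lattice points $\alpha,\beta\in R$, join them by a monotone nearest-neighbour path staying in $R$, of length at most $C/\sigma$. Then
\[
|u(\alpha)-u(\beta)|^d\le (C/\sigma)^{d-1}\sum_{\text{bonds in path}}|\nabla u|^d .
\]
Summing over pairs gives the inequality on $R$, provided the number of paths through a given bond is controlled. Using the coordinate-by-coordinate path (first move $x_1$, then $x_2$, and so on), a fixed bond in direction $k$ is used by at most $C\sigma^{-(d+1)}$ pairs. Combining this with the normalisation $\mu(R)^{-1}\sim\sigma^d$ gives exactly the factor $\sigma^{-d}$.

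Third, chain the rectangles: differences of means on overlapping rectangles are controlled by the rectangle estimates through the overlap region, using again the Jensen step above. The restriction $\sigma<\sigma_0$ guarantees that each rectangle and each overlap contains lattice points in a proportion comparable to its volume.

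The main obstacle is the discrete path-counting: the multiplicity bound must be uniform and the paths must stay inside the annulus. Choosing the rectangles convex and chaining them avoids paths wrapping around the hole, and that is presumably where the gap in \cite[Lemma 2]{BraSig} lies. Care is also needed with the lattice cells near $\partial A$, since the piecewise-constant identification includes cells only partially inside $A$.
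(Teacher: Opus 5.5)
Your proposal is correct, and its outer structure matches the paper's appendix proof on unions of rectangles. The shared steps are: the reduction to $x_0=0$, $\tau=1$; the Jensen step that trades $(u)_{A'}$ for $(u)_A$ at the cost of $\mu_\sigma(A)/\mu_\sigma(A')$; and the citation of \cite[Proposition 4.2]{AABPT} in the continuous case. You are right that in the discrete case the rescaling produces a translated lattice, so constants must be uniform under lattice translations; the paper passes over this with ``a translation and scaling argument''. On a single rectangle, your coordinate-by-coordinate path count is the same estimate the paper gets by adapting \cite[Lemma 3.6]{AliCic} and summing over $\xi\in\frac{2}{\sigma}A\cap\mathbb{Z}^d$.

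The genuine difference is how the rectangles are glued. The paper proves a pairwise path lemma (Lemma~\ref{lemma: cammini}). A point of $A_j$ is joined to a point of $A_\ell$ through intermediate nodes in cubes $Q_h\subset A_h\cap A_{h+1}$. These nodes are selected by surjective labelings $I_h$ that spread the load, so each segment is used by boundedly many pairs. This yields $\sum_{\alpha,\beta\in A}|u(\alpha)-u(\beta)|^p\le CN^{p+1}\sum_h\sum_{\alpha,\beta\in A_h}|u(\alpha)-u(\beta)|^p$, a mean-free estimate with an explicit constant invariant under homotheties, at the price of the multiplicity bookkeeping. You instead chain the means $(u)_{R_h}$ through the overlaps. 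This is the classical and more elementary argument; it needs no combinatorics beyond the single rectangle and allows any connected adjacency graph rather than a linear chain. Both routes rely on the same geometric input: overlaps whose lattice content is comparable to that of the pieces.

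Two small corrections:
\begin{itemize}
\item The $3^d-1$ boxes of a grid meet only along faces, so they cannot be used for chaining through overlaps. Use genuinely overlapping pieces instead, such as the $2d$ slabs $Q(L)\cap\{\pm x_k>\ell/2\}$.
\item The paper's remark places the defect of \cite[Lemma 2]{BraSig} in the wrong scaling of the constant under homotheties, not in paths winding around the hole.
\end{itemize}
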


We conclude this subsection with the rescaled version of Poincaré inequality. In the continuous case this is proved in \cite[Proposition 4.1]{AABPT}; as for the discrete case, a proof can be found in the Appendix (see Proposition \ref{Appendix Discrete Poincare}).
\begin{proposition}
\label{prop: pcre}
    Let $T>1, x_0\in \rr^d,$ and let $A\subset \mathbb{R}^d$ be a bounded open set. There exists a positive constant $C$ depending on $A$ such that 
    \begin{equation*}
        \int_{x_0+\tau A}|u(x)|^d\, d\mu_\s(x)\leq C\Bigl(\frac{\tau}{\s}\Bigr)^d G_\s(u,x_0+\tau A)
    \end{equation*}
    for every $\tau,\s>0$ and for every $u \in \mathcal{A}_{\mu_\s}(x_0+\tau A;\mathbb{R}^m)$ such that $u(x)=0$ for $\mu_\sigma$-a.e. $x\in x_0+\tau A$ such that $\dist_\infty(x,(x_0+\tau A)^c)\leq \s T$.
\end{proposition}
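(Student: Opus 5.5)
We need to prove the discrete Poincaré inequality, Proposition \ref{prop: pcre}. The continuous case is \cite[Proposition 4.1]{AABPT}, so I only treat $\mu_\s=\sum_{\alpha\in\s\ZZ^d}\delta_\alpha$. The plan is to use scaling to reduce to $\tau=1$, $x_0=0$, and then run a one-dimensional telescoping argument along coordinate lines, taking care that every line starts in the region where $u$ vanishes.

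\textbf{Reduction.} Translating by $x_0$ changes nothing up to a shift of the lattice. The lattice $\s\ZZ^d$ is not invariant under translations, so the constant must be independent of the relative position of $x_0$ and the lattice; the argument below only uses one-dimensional lines and is insensitive to this. Set $N:=\tau/\s$. Rescaling by $\s$, the inequality becomes a statement on $\ZZ^d$ for the set $E:=\s^{-1}(x_0+\tau A)$, which is contained in a cube of side $N\,\mathrm{diam}(A)$. The claim is then
\[
\sum_{\alpha\in E\cap\ZZ^d}|u(\alpha)|^d\leq C N^d\sum_{k=1}^d\sum_{[\alpha,\alpha+e_k]\subset E}|u(\alpha+e_k)-u(\alpha)|^d,
\]
for $u$ vanishing on the lattice points of $E$ within $\ell^\infty$-distance $T$ of $E^c$.

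\textbf{One-dimensional estimate.} Fix $\alpha\in E\cap\ZZ^d$ and consider the line $\alpha+\ZZ e_1$. Let $\beta$ be the first point reached when moving from $\alpha$ in direction $-e_1$ whose segment to the next point leaves $E$, or more precisely whose distance to $E^c$ is at most $1<T$. Since $E$ is bounded, such a $\beta$ exists. By hypothesis $u(\beta)=0$. The segment from $\beta$ to $\alpha$ consists of nearest-neighbour bonds contained in $E$, because the points in between have distance $>1$ from $E^c$. Its length is at most $N\,\mathrm{diam}(A)$. Telescoping along these bonds and applying H\"older's inequality gives
\[
|u(\alpha)|^d\leq (N\,\mathrm{diam}A)^{d-1}\sum_{\text{bonds on the segment}}|u(\gamma+e_1)-u(\gamma)|^d .
\]
Summing over $\alpha$, each bond on a given line is counted at most $CN$ times, once for each $\alpha$ on that line to its right. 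This yields the bound $CN^d G$ with constant $C=C(\mathrm{diam}A)$, using only $k=1$ bonds.

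\textbf{Main obstacle.} The delicate point is that the subset of $E$ where $u$ vanishes must be met by the segment before the segment exits $E$. This requires that the first point with $\mathrm{dist}_\infty\leq T$ is reached while the bonds still lie inside $E$. Since $T>1$, moving by unit steps toward the boundary we cannot jump past the layer $\{\mathrm{dist}_\infty\leq T\}$: $\mathrm{dist}_\infty$ changes by at most $1$ per step, and we stop at the first lattice point in that layer. All previous points have distance $>T>1$, so the bonds between them lie in $E$. One also has to check that the bond entering the stopping point lies in $E$: its start has distance $>1$ to $E^c$, so the whole unit segment stays inside $E$. This is exactly where $T>1$ is used. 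Scaling back, with $\s^{d}$-free weights since the discrete measure is a counting measure, gives the proposition with $C$ depending only on $A$ through its diameter.
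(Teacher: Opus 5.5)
Your argument is correct, but it takes a different route from the paper.

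\textbf{The paper's route.} The paper attaches to $u$ a piecewise-affine interpolant $\hat u$ on a triangulation of the lattice. This is written out for $d=2$, with Kuhn's decomposition indicated for general $d$. The zero layer of width $\sigma T$ forces $\hat u\in W^{1,p}_0(A)$. The paper then applies the continuous Poincar\'e inequality and converts back in two comparisons:
\begin{itemize}
\item $\int_A|\nabla\hat u|^p$ against the nearest-neighbour energy;
\item the nodal sum $\sum\sigma^d|u(\alpha)|^p$ against $\int_A|\hat u|^p$.
\end{itemize}

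\textbf{Your route.} You telescope along $e_1$-lines from each $\alpha$ down to the first lattice point $\beta$ with $\mathrm{dist}_\infty(\beta,E^c)\le T$. The preceding point has distance $>T\ge 1$ to $E^c$, so both $\beta$ and the last bond lie in $E$. Hence $u(\beta)=0$ applies and every bond used is admissible. The multiplicity count then gives the explicit constant $(\mathrm{diam}A)^d$; the width of $A$ in the $e_1$-direction would already suffice.

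\textbf{Comparison.} Your argument has several advantages:
\begin{itemize}
\item it is elementary and independent of dimension, with no triangulation needed;
\item it uses only bonds in one coordinate direction, which is legitimate since $G_\sigma$ contains at least those terms;
\item it never extends $u$ outside $E$, never checks membership in $W^{1,p}_0$, and needs no norm equivalence for piecewise-affine functions;
\item it handles an arbitrary offset of $x_0$ relative to $\sigma\mathbb{Z}^d$ directly, rather than through the translation reduction the paper invokes (translation does not preserve the lattice).
\end{itemize}
The paper's approach, in exchange, ties the discrete constant to the Poincar\'e constant of $A$ in $W^{1,p}_0(A)$ rather than to a coordinate width. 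It also reuses an interpolation device that is standard in discrete-to-continuum arguments.

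Two minor remarks. Your stopping argument in fact only needs $T\ge 1$. Replacing the exponent $d$ by $p$ throughout gives the $L^p$ version stated in the appendix.
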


\subsection{Functionals with finite-range interactions} Now we introduce ``truncated" versions of the functionals in \eqref{funzionali unconstrained}. For every $T>2$ and for every $z\in\rr^m$ we let
\begin{equation*}
    f^T(\xi,z):=\begin{cases} f(\xi,z) &\text{ if } |\xi|\leq T, \\
    0 &\text{ otherwise},
\end{cases}
\end{equation*}
 and, given $\s>0$ and $A\subseteq\rr^d$ $\ms$-measurable, we let $\mathcal{F}^T_\s(\cdot, A):L^d(A;\mathbb{R}^{m})\rightarrow [0,+\infty]$ be defined as
\begin{equation}\label{funz unco tronc loc}
\mathcal{F}^T_{\s}(u,A):=\begin{cases}
\displaystyle\int_{\mathbb{R}^{d}}\int_{A_{\ms}(\xi)}f^T(\xi, u(x+\s\xi)-u(x))\, d\ms(x)\, d\mu_1(\xi) & \text{ if } u\in\A_{\ms}(A;\rr^m),\\[3pt]
+\infty & \text{ otherwise}.
\end{cases}
\end{equation}
Also in this case, we write $\mathcal{F}^T_\s(u)$ in place of $\mathcal{F}^T_\s(u,\Omega)$ when $A=\Omega$. 
\begin{remark} Since $T>2>r_0$, by (G0) and Remark \ref{rmk: bound uniforme}(i) it follows
    \begin{equation}\label{eq: riferimento lower} 
       CG_\s(u,A)\leq  \F^T_\s(u,A).
    \end{equation}
\end{remark}

As for the constrained functionals \eqref{funzionali}, for $\e>0$ we define $F^T_\e:L^d(\Omega;\mathbb{R}^{m})\rightarrow [0,+\infty]$ as 
\begin{equation} \label{funz tronc loc}
    F^T_{\varepsilon}(u):=
     \begin{cases}
\displaystyle\int_{\mathbb{R}^{d}}\int_{\Omega_{\mu_\e}(\xi)}f^T(\xi, u(x+\e\xi)-u(x))\, d\mu_\e(x)\, d\mu_1(\xi) &\text{if}\  u \in \mathcal{D}_\e(\Omega;\mathbb{R}^m), \\
        +\infty &\text{otherwise}.
    \end{cases}
\end{equation}      

The $\Gamma$-limit of the functionals $\{\F^T_\s\}_\s$ in \eqref{funz unco tronc loc} can be computed making use of Theorem \ref{theorem: Gamma unconstrained}.  According to the following proposition, the corresponding energy densities, denoted by $f_{\rm hom}^T$, converge as $T\to+\infty$ to $f_{\rm hom}$, the energy density of the $\Gamma$-limit of the functionals $\{\F_\s\}_\s$ defined by \eqref{funzionali unconstrained}.

\begin{proposition}
\label{prop_conv_fhom_uniforme}
     Assume $T>2$ and let
  \begin{equation}
       f^T_{\hom}(M):=\lim_{h \rightarrow +\infty}\frac{1}{h^{d}}\inf\Big\{\int_{Q(h)}\int_{Q(h)}f^T(y-x,v(y)-v(x))\, d\mu_1(x)\, d\mu_1(y) : v\in \mathcal{A}^M_{\mu_1}(Q(h))\Big\}
        \label{f^T_hom}
    \end{equation}
for every $M\in \rr^{m \times d}$, where $\mathcal{A}^M_{\mu_1}(Q(h))$ is as in \eqref{A^M}. Given $f_{\rm hom}$ as in \eqref{f_hom}, it holds that
\begin{equation}
\label{convergenza_fhom_troncate}
   \lim_{T\to+\infty}f^T_{\rm hom}=f_{\rm hom}
\end{equation}
monotonically and uniformly on compact subsets of $\mathbb{R}^{m\times d}$. 
\end{proposition}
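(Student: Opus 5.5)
Since $f^T\le f^{T'}\le f$ pointwise for $2<T<T'$, the integrals in \eqref{f^T_hom} are monotone in $T$ for every admissible $v$ and every $h$, so $f^T_{\rm hom}\le f^{T'}_{\rm hom}\le f_{\rm hom}$. Hence the limit $g:=\lim_{T\to+\infty}f^T_{\rm hom}$ exists pointwise and satisfies $g\le f_{\rm hom}$. Existence of the limit in $h$ defining $f^T_{\rm hom}$ is part of Theorem \ref{theorem: Gamma unconstrained} applied to $f^T$, which still satisfies (H) and (G) (with the same $m$ near the origin, since $T>r_0$). By that theorem each $f^T_{\rm hom}$ is quasiconvex, $d$-homogeneous, and satisfies $\frac1C|M|^d\le f^T_{\rm hom}(M)\le C|M|^d$ with $C$ independent of $T$: the upper bound is inherited from $f^T\le f$, and the lower bound comes from (G0) via \eqref{eq: riferimento lower}. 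Quasiconvex functions with uniform $d$-growth are locally Lipschitz with a uniform constant, $|h(M)-h(M')|\le C(1+|M|^{d-1}+|M'|^{d-1})|M-M'|$. The family $\{f^T_{\rm hom}\}_T$ is therefore equi-Lipschitz on compacts, so pointwise monotone convergence upgrades to uniform convergence on compact sets (Dini's theorem or Ascoli--Arzelà). It thus remains to prove $f_{\rm hom}\le g$.

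For the reverse inequality, I fix $M$ and $\eta>0$ and pick $T$, then $h$ large, and a function $v\in\mathcal{A}^M_{\mu_1}(Q(h))$ that is almost optimal for $f^T$. The goal is to estimate the long-range tail
\begin{equation*}
\frac{1}{h^d}\int_{Q(h)}\int_{Q(h)}(f-f^T)(y-x,v(y)-v(x))\,d\mu_1(x)\,d\mu_1(y)\le \frac{1}{h^d}\int_{\{|\xi|>T\}}M(\xi)\int_{Q(h)\cap(Q(h)-\xi)}|v(x+\xi)-v(x)|^d\,d\mu_1(x)\,d\mu_1(\xi).
\end{equation*}
To apply the long-range estimate on all of $\rr^d$, I extend $v$ by $Mx$ outside $Q(h)$; this is compatible with the boundary layer in \eqref{A^M}. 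Then $w:=v-Mx$ has compact support in $\overline{Q}(h)$, and $|v(x+\xi)-v(x)|^d\le C(|w(x+\xi)-w(x)|^d+|M|^d|\xi|^d)$. Estimate \eqref{long-range Rd} with $\s=1$ bounds the $w$-part by $C(1+|\xi|^d)G_1(w,\rr^d)$. In turn $G_1(w,\rr^d)\le C(G_1(v,Q(h))+|M|^dh^d)$, and by \eqref{eq: riferimento lower} $G_1(v,Q(h))\le C\,\mathcal{F}^T_1(v,Q(h))\le C(f_{\rm hom}(M)+1)h^d\le C'(|M|^d+1)h^d$.

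Collecting these bounds, the tail is at most $C(1+|M|^d)\int_{\{|\xi|>T\}}M(\xi)(1+|\xi|^d)\,d\mu_1(\xi)$, which tends to $0$ as $T\to+\infty$ by (G1). Hence $f_{\rm hom}(M)\le f^T_{\rm hom}(M)+\eta+o_T(1)$, and letting $T\to+\infty$ and then $\eta\to0$ gives $f_{\rm hom}\le g$.

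The main obstacle is the bookkeeping in the discrete case. There, $\mathcal{F}_1$ only counts pairs whose segment lies in the domain, whereas \eqref{f^T_hom} uses all pairs in $Q(h)$. Since $Q(h)$ is convex, the two notions coincide and the extension by the affine map is harmless. One must still check that the boundary strip of width $1$ in \eqref{A^M} yields a well-defined piecewise-constant extension of $v$ to $\rr^d$ without extra energy beyond $C|M|^dh^{d-1}$.
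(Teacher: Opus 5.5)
Your argument is correct. The monotonicity and the upgrade from pointwise to locally uniform convergence are exactly what the paper does: uniform $d$-growth, the local Lipschitz bound for quasiconvex $d$-homogeneous functions from \cite[Remark 4.13]{BDF}, and Ascoli--Arzel\`a. The real difference is that the paper does not prove the pointwise limit $f^T_{\rm hom}\to f_{\rm hom}$. It imports it from the proofs of \cite[Theorem 6.1]{AABPT} and \cite[Proposition 4.2]{AliCic}. You instead derive it directly from the cell formula, by a tail estimate combining (G1), \eqref{long-range Rd} and \eqref{eq: riferimento lower}.

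Your route is self-contained, and it gives more than you use. If you track the constants (all independent of $T$ and $M$) and let $\eta\to0$ after $h\to+\infty$, you get
\begin{equation*}
0\le f_{\rm hom}(M)-f^T_{\rm hom}(M)\le C|M|^d\int_{\{|\xi|>T\}}M(\xi)(1+|\xi|^d)\,d\mu_1(\xi).
\end{equation*}
This is already uniform convergence on compact sets, with an explicit rate. So the quasiconvexity and equi-Lipschitz step is superfluous in your proof. The paper's route is shorter, at the price of relying on the cited proofs.

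One detail needs fixing in your bound $G_1(w,\rr^d)\le C(G_1(v,Q(h))+|M|^dh^d)$. In the continuous case, the pairs $(x,x+\xi)$ with $|\xi|<r_0$ that straddle $\partial Q(h)$ do not all contribute zero. Since $r_0>1$ exceeds the width of the layer in \eqref{A^M}, $w(x)$ may be nonzero where $1<\mathrm{dist}_\infty(x,Q(h)^c)<r_0$. These terms are still controlled by $G_1(w,Q(h))$. Write $|w(x)|^d=|w(x+\xi')-w(x)|^d$ and average over the set of $\xi'\in B(r_0)$ with $x+\xi'$ in the layer; this set has measure bounded below uniformly in $x$.

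In the discrete case, by contrast, a nearest-neighbour pair crossing $\partial Q(h)$ has its interior endpoint in the layer, so $w$ vanishes at both endpoints. The issue you flag for the discrete case therefore does not arise; the continuous case is the one that needs the extra step.
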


\begin{proof}
    The proof of the pointwise convergence in \eqref{convergenza_fhom_troncate} is contained in the proof of \cite[Theorem 6.1]{AABPT} for the continuous case and in the proof of \cite[Proposition 4.2]{AliCic} for the discrete case (in particular, see equation $(4.9)$ therein). As for the uniform convergence on compact subsets of $\mathbb{R}^{m\times d}$, we note that $f_{\rm hom}^T\leq f_{\rm hom}$ for every $T>2$. By \eqref{crescita dell' omogenizzata}, this implies that there exists a positive constant $C$ such that
\begin{equation}\label{in rmk f_hom^T}
    \frac{1}{C}|M|^d\leq f_{\rm hom}^T(M)\leq f_{\rm hom}(M)\leq C|M|^d
\end{equation}
for every $T>2$ and $M\in \rr^{m\times d}$. Since $f_{\rm hom}^T$ is a $d$-homogeneous quasiconvex function, as a consequence of \cite[Remark 4.13]{BDF} and \eqref{in rmk f_hom^T}, there exists a positive constant $C$ such that
\begin{equation}\label{fhom-loc-lip}
    |f_{\rm hom}^T(M)-f_{\rm hom}^T(M')|\leq C(|M|^{d-1}+|M'|^{d-1})|M-M'|
\end{equation}
for every $T>2$ and $M,M'\in \rr^{m\times d}$.
The claim follows by the pointwise convergence, \eqref{fhom-loc-lip}, and Ascoli-Arzelà Theorem. 
\end{proof}

The main advantage in dealing with functionals that account for finite-range interactions only is that, in broad terms, they behave quite similarly to local functionals. For the constrained energies $\{F_\e\}_\e$ defined in \eqref{funzionali} as well, it is possible to perform the asymptotic analysis considering first functionals with finite-range interactions \eqref{funz tronc loc}, and then recovering the general case letting $T\to+\infty$, as stated in the proposition below. The proof is provided in \cite[Section 5, Step 1]{Fus} for the discrete case and, for the continuous case, it is obtained by arguing as in the proof of \cite[Lemma 5.1]{AABPT}, where the corresponding result for the unconstrained energies $\{\F_\s\}_\s$ is established. 

\begin{proposition}\label{proposition: gamma_troncamento} Let $\{T_h\}_h$ be an increasing sequence such that $T_h\to+\infty$ as $h\to+\infty$ and let $\{F^{T_h}_\e\}_\e$ be defined as in \eqref{funz tronc loc}. For every $h\in\NN$, assume that there exists a functional $F^{T_h}:L^d(\Omega;\mathbb{R}^{m})\rightarrow [0,+\infty]$ such that
\begin{equation*}
     \Gamma\text{-}\lim_{\e\to0} F^{T_h}_{\e}= F^{T_h}.
\end{equation*}
 Then, it holds
\begin{equation*}
    \Gamma\text{-}\lim_{\e\to0} F_{\e}=\lim_{h\to+\infty} F^{T_h}.
\end{equation*}
\end{proposition}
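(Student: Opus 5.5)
The plan is to follow \cite[Lemma 5.1]{AABPT} in the continuous case and \cite[Section 5, Step 1]{Fus} in the discrete case, by comparing $F_\e$ with $F_\e^{T}$ on sequences with equibounded energy. Since $f^{T_h}\le f^{T_{h+1}}\le f$, we have $F^{T_h}_\e\le F^{T_{h+1}}_\e\le F_\e$. Passing to $\Gamma$-limits, the sequence $F^{T_h}$ is increasing in $h$, so $\lim_h F^{T_h}$ exists, and
\begin{equation*}
\lim_h F^{T_h}\le \Gamma\text{-}\liminf_{\e\to0} F_\e .
\end{equation*}
This is the liminf inequality, and it requires no further work.

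For the limsup inequality, fix $u$ with $\lim_h F^{T_h}(u)<+\infty$ and, for each $h$, a recovery sequence $u^h_\e\to u$ with $\limsup_\e F^{T_h}_\e(u^h_\e)\le F^{T_h}(u)$. It suffices to show that
\begin{equation*}
F_\e(v)\le F^{T}_\e(v)+\omega(T)\bigl(1+F^T_\e(v)\bigr)+o_\e(1)
\end{equation*}
for $v\in\mathcal{D}_\e(\Omega;\rr^m)$, with $\omega(T)\to0$ as $T\to\infty$. A diagonal argument in $(h,\e)$ then gives the claim. The constraint $v=0$ on $P_\e$ is not affected by this comparison, since both functionals share the same domain $\mathcal{D}_\e$.

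The tail estimate splits the long-range part $\int_{|\xi|>T}$ into two regions. In the interior region $\{x:\mathrm{dist}_\infty(x,\Omega^c)>(|\xi|+3\sqrt d)\e\}$, I bound the contribution by
\begin{equation*}
M(\xi)\int|v(x+\e\xi)-v(x)|^d\,d\mu_\e ,
\end{equation*}
then apply Lemma \ref{lemma: controllo long range on frames} to get a bound of $CM(\xi)(1+|\xi|^d)G_\e(v,\Omega)$. Integrating over $|\xi|>T$ and using (G1) together with \eqref{eq: riferimento lower} yields
\begin{equation*}
\omega(T)\,F^T_\e(v),\qquad \omega(T):=C\int_{|\xi|>T}M(\xi)(1+|\xi|^d)\,d\mu_1(\xi).
\end{equation*}

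\emph{The main obstacle} is the region near $\partial\Omega$, where pairs $x,x+\e\xi$ both lie in $\Omega$ but Lemma \ref{lemma: controllo long range on frames} does not apply directly. The standard remedy is to modify the recovery sequence near the boundary. Since $\Omega$ is Lipschitz, I would first work on $\Omega'\Supset\Omega$, or use a local reflection or extension, so that $u$ extends to $W^{1,d}$ of a neighbourhood of $\Omega$. I would then construct recovery sequences on the larger set and control boundary pairs through the extended function, applying the interior estimate on the enlarged domain. For pairs with $|\xi|$ large and $x$ in a layer of width $|\xi|\e$, the integrability $\int M|\xi|^d<\infty$ makes their total weight vanish uniformly.

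One must also make sure the extension respects the perforation constraint. The perforations outside $\Omega$ can simply be kept: I extend within $\mathcal{D}_\e$ of the larger set, using the recovery sequences of $F^{T_h}$ on $\Omega'$, which I would assume are available by localization of the same $\Gamma$-limit hypothesis. If that is not directly available, I would instead rely on the cited references, where the boundary layer is handled through the inclusion $\Omega_{\mu_\e}(\xi)\subset\Omega\cap(\Omega-\e\xi)$ and a cut-off of the long-range interactions.
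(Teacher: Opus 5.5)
Your liminf half is correct, and so is the interior part of the tail estimate: on $\{x:\mathrm{dist}_\infty(x,\Omega^c)>(|\xi|+3\sqrt d)\e\}$, Lemma~\ref{lemma: controllo long range on frames}, (G1) and \eqref{eq: riferimento lower} bound the interactions with $|\xi|>T$ by $\omega(T)\,F^T_\e(v)$. The gap is the boundary layer, which you never actually control. You say it suffices to prove an inequality for arbitrary $v\in\mathcal{D}_\e(\Omega;\rr^m)$, but you do not prove it. Your remark that integrability of $M|\xi|^d$ makes the layer negligible is false for $T<|\xi|\ll 1/\e$. The layer of width $\sim|\xi|\e$ along $\partial\Omega$ has $\mu_\e$-mass of order $|\xi|\e^{1-d}$. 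So even an $L^\infty$ bound only gives $C\e^{1-d}\int_{\{|\xi|>T\}} M(\xi)(1+|\xi|)\,d\mu_1(\xi)$, which is useless as $\e\to0$. A crude bound works only for $|\xi|\ge c/\e$: there $\int_{\{|\xi|>c/\e\}}M\,d\mu_1\le (\e/c)^d\int M|\xi|^d\,d\mu_1=o(\e^d)$, which compensates the factor $\e^{-d}$ in $\int_\Omega|v|^d\,d\mu_\e$. The pairs with $T<|\xi|<c/\e$ near $\partial\Omega$ need an energy estimate up to the boundary. Neither Lemma~\ref{lemma: controllo long range on frames} nor \cite[Lemma 3.6]{AliCic} provides one; in the discrete case, nearest-neighbour paths shadowing a segment $[x,x+\e\xi]\subset\Omega$ close to $\partial\Omega$ may leave $\Omega$.

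Moving to $\Omega'\Supset\Omega$ is the right idea, but it uses information the statement does not give you. The only hypothesis is $\Gamma$-convergence of $F^{T_h}_\e$ on $\Omega$ itself to abstract functionals $F^{T_h}$. You would need two further facts:
\begin{itemize}
\item[(a)] for an extension $\tilde u\in W^{1,d}(\Omega';\rr^m)$ of $u$, a sequence $v_\e\in\mathcal{D}_\e(\Omega';\rr^m)$ with $v_\e\to\tilde u$ and $\limsup_\e F^{T_h}_\e(v_\e,\Omega')\le F^{T_h}(\tilde u,\Omega')$, where $F^{T_h}_\e(\cdot,\Omega')$ is the constrained truncated energy on $\Omega'$;
\item[(b)] $F^{T_h}(\tilde u,\Omega')\to F^{T_h}(u,\Omega)$ as $\Omega'\downarrow\Omega$.
\end{itemize}
Given these, apply Lemma~\ref{lemma: controllo long range on frames} in $\Omega'$ to all $\xi$ with $(|\xi|+3\sqrt d)\e<\mathrm{dist}_\infty(\Omega,(\Omega')^c)$, and use the crude bound for the remaining $\xi$. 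This gives $F_\e(v_\e)\le(1+C\omega(T_h))F^{T_h}_\e(v_\e,\Omega')+o(1)$. You then conclude by letting $\e\to0$, then $\Omega'\downarrow\Omega$, then $h\to\infty$; no diagonal argument is needed. Saying you ``would assume'' (a) ``by localization'', or deferring to the references, does not close the step. You can either make (a)--(b) explicit hypotheses, or prove a boundary version of the long-range lemma directly on the Lipschitz set $\Omega$. The hypotheses do hold where the proposition is used: Steps 1--2 apply to any bounded Lipschitz domain and give the explicit limit $\int f^T_{\rm hom}(\nabla u)\,dx+\gamma\int\varphi^T(u)\,dx$, and $u$ extends across the Lipschitz boundary of $\Omega$.
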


The next result accounts for the pre-compactness in the strong $L^d$-topology of sequences of functions with uniformly bounded energies. We provide a more general statement for non-truncated energies \eqref{funzionali unconstrained}. The proof of this fact follows combining \cite[Theorem 4.2]{AABPT} with \eqref{eq: riferimento lower} in the continuous case and can be inferred from the proof of \cite[Corollary 3.11]{AliCic} in the discrete case.  

\begin{proposition}\label{prop: compactness} Let $\{\ej\}_j$ be a positive sequence such that $\ej\to0$ as $j\to+\infty$, let $\{\F_{\ej}\}_j$ be as in \eqref{funzionali unconstrained} with $A=\Omega$, and let $u_j\in\A_{\mu_{\ej}}(\Omega;\rr^m), j\in\NN,$ be a sequence such that
\begin{equation*}
    \sup_j\,(\F_{\ej}(u_j)+\|u_j\|_{L^d(\Omega; \rr^m)})<+\infty.
\end{equation*}
Then, there exist a (not relabeled) subsequence $u_j\in\A_{\mu_{\ej}}(\Omega;\rr^m), j\in\NN,$ and a function $u\in W^{1,d}(\Omega;\rr^m)$ such that $u_j\to u$ in $L^d(\Omega; \rr^m)$ as $j\to+\infty$. 
\end{proposition}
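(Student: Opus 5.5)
The plan is to reduce the compactness statement to results already available for the unconstrained energies, treating the continuous and discrete cases separately, with the lower bound \eqref{eq: riferimento lower} as the common bridge. The hypothesis controls $\F_{\ej}(u_j)$, which involves all interactions, and this dominates the truncated energy: since $f\ge f^T$ pointwise, $\F^T_{\ej}(u_j)\le \F_{\ej}(u_j)$ for any fixed $T>2$. Combining this with \eqref{eq: riferimento lower} gives
\begin{equation*}
\sup_j\, G_{\ej}(u_j,\Omega)\le C\sup_j \F_{\ej}(u_j)<+\infty.
\end{equation*}
From here on we only need to work with the reference nearest-range energies $G_{\ej}$, together with the uniform $L^d$ bound.

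\emph{Continuous case.} We have $\mu_{\ej}=\ej^{-d}\mathcal L^d$, so $G_{\ej}(u_j,\Omega)$ is exactly the rescaled functional
\begin{equation*}
\int_{B(r_0)}\int_{\Omega\cap(\Omega-\ej\xi)}\Bigl|\frac{u_j(x+\ej\xi)-u_j(x)}{\ej}\Bigr|^d\,dx\,d\xi .
\end{equation*}
This is the energy with kernel the indicator of $B(r_0)$, which appears in the compactness theorem \cite[Theorem 4.2]{AABPT}. Since $\Omega$ is Lipschitz and the $L^d$ norms are bounded, that theorem yields a subsequence converging strongly in $L^d$ to some $u\in W^{1,d}(\Omega;\rr^m)$.

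\emph{Discrete case.} Here $G_{\ej}(u_j,\Omega)$ is the sum of nearest-neighbour $d$-th powers of difference quotients times $\ej^{d}$ (recall $\ej^{d-d}=1$). This is precisely the bound under which \cite[Corollary 3.11]{AliCic} argues. I would reproduce that argument as follows.
\begin{itemize}
\item[(1)] Construct piecewise-affine interpolations $\tilde u_j$ on a Kuhn triangulation of the cells contained in $\Omega$. These satisfy $\int_{\Omega'}|\nabla\tilde u_j|^d\,dx\le C G_{\ej}(u_j,\Omega)$ for every $\Omega'\Subset\Omega$.
\item[(2)] Show that $\|\tilde u_j-u_j\|_{L^d(\Omega')}^d\le C\ej^d G_{\ej}(u_j,\Omega)\to0$.
\item[(3)] Apply Rellich on each $\Omega'$ and a diagonal argument to get local strong convergence to some $u\in W^{1,d}_{\rm loc}$ with $\nabla u\in L^d(\Omega)$.
\end{itemize}

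The main obstacle is upgrading to strong convergence on all of $\Omega$ rather than on $\Omega'\Subset\Omega$, and getting $u\in W^{1,d}(\Omega)$. For this I would use the Lipschitz regularity of $\partial\Omega$ to obtain uniform equi-integrability near the boundary. One route is to cover a neighbourhood of $\partial\Omega$ by finitely many Lipschitz graph charts, and in each chart control $u_j$ on the thin boundary layer by its values on translates inward. The translates use only nearest-neighbour segments inside $\Omega$, hence are controlled by $G_{\ej}$ through a discrete Fréchet–Kolmogorov estimate $\|u_j(\cdot+h)-u_j\|^d_{L^d}\le C(|h|+\ej)^dG_{\ej}$. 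An equivalent route is to invoke Proposition \ref{prop: poincare-wirtinger} on cubes covering the layer. Either way this gives Fréchet–Kolmogorov precompactness in $L^d(\Omega)$. The limit lies in $W^{1,d}(\Omega)$ because a Lipschitz domain is a $W^{1,d}$-extension domain and the gradient bound is uniform.
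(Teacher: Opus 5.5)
Your proposal follows the paper's proof, which consists precisely of the bound $C\,G_{\ej}(u_j,\Omega)\le \F^T_{\ej}(u_j)\le \F_{\ej}(u_j)$ combined with \cite[Theorem 4.2]{AABPT} in the continuous case and with the argument of \cite[Corollary 3.11]{AliCic} in the discrete case, and your interpolation--Rellich--translation sketch reconstructs the latter (in the boundary step, translating inward by a length $s$ with $\eta\ll s\ll 1$ gives, up to the issue below, $\int_{\{x\in\Omega:\,\dist(x,\partial\Omega)<\eta\}}|u_j|^d\,dx\le C\frac{\eta}{s}\|u_j\|^d_{L^d(\Omega;\rr^m)}+C(s+\ej)^d G_{\ej}(u_j,\Omega)$, hence the needed uniform smallness). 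One point that neither your sketch nor the paper's citation addresses is that, with $\A_{\mu_{\ej}}(\Omega;\rr^m)$ defined as written, $u_j$ is unconstrained on the part of $\Omega$ not covered by the cells $\alpha+[0,\ej)^d$ with $\alpha\in\Omega\cap\ej\ZZ^d$, a region the energy never sees (for $\Omega=(0,1)^d$, functions vanishing at every lattice point of $\Omega$ but with unit $L^d$ norm on this $O(\ej)$-layer have zero energy and no $L^d$-convergent subsequence), so in the discrete case one must either tie those values to lattice values in $\Omega$ or assume an $L^\infty$ bound, the latter being what is available where the paper applies the proposition.
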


\subsection{Homogenization-type capacitary formula}

We illustrate some relevant properties of the density $\f$ appearing in the {\it strange term} in our main result Theorem \ref{thm: main discreto}. The following proposition, which coincides with \cite[Proposition 5.1]{Sig-cont} up to replacing spherical perforations with cubic ones, determines the asymptotics of non-linear capacities modelled on $d$-homogeneous densities.

\begin{proposition}\label{prop: limite log} Let $g: \rr^{m\times d}\to [0,+\infty)$ be a continuous $d$-homogeneous function and assume that there exists a positive constant $C$ such that,  for every $M\in\rr^{m\times d}$, it holds
\begin{equation*}
    \frac{1}{C}|M|^d\leq g(M)\leq C|M|^d.
\end{equation*}
Then, there exist
\begin{equation*}
    \psi(z):=\lim_{L\to+\infty}(\log L)^{d-1}\min\Bigl\{\int_{Q(L)}g(\nabla v)\, dx : v-z\in W^{1,d}_0(Q(L); \rr^m): v= 0 \text{ on } Q(1)\Bigr\},
\end{equation*}
and a positive constant $C$ such that,  for every $z\in\rr^{m}$, it holds
\begin{equation*}
    \frac{1}{C}|z|^d\leq \psi(z)\leq C|z|^d.
\end{equation*}
\end{proposition}

Our main observation is that the function $\psi$ in Proposition \ref{prop: limite log} is convex. To see this, we first prove the following lemma.

\begin{lemma}\label{lemma: psi**} Let $\psi:\rr^m\to[0,+\infty)$ be a coercive continuous function and let $\psi^{**}$ denote its convex envelope. For every $z\in\rr^m$ it holds
\begin{equation}\label{eq: psi**}
     \psi^{**}(z)=\lim_{S\to+\infty}\min\Bigl\{\frac{1}{S}\sum_{h=1}^S\psi(z_h): \frac{1}{S}\sum_{h=1}^Sz_h=z\Bigr\}.
 \end{equation}
\end{lemma}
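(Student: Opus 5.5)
Write $\Phi_S(z):=\min\{\frac1S\sum_{h=1}^S\psi(z_h):\frac1S\sum_h z_h=z\}$. The minimum exists: $\psi$ is continuous and coercive, so minimizing sequences have bounded $z_h$. The plan is to prove $\psi^{**}\le\liminf_S\Phi_S(z)$ and $\limsup_S\Phi_S(z)\le\psi^{**}(z)$.

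\textbf{Lower bound.} This is immediate from Jensen's inequality. Since $\psi^{**}\le\psi$ and $\psi^{**}$ is convex, any admissible $(z_h)$ satisfies
$$\frac1S\sum_h\psi(z_h)\ge\frac1S\sum_h\psi^{**}(z_h)\ge\psi^{**}\Bigl(\frac1S\sum_h z_h\Bigr)=\psi^{**}(z).$$
Hence $\Phi_S(z)\ge\psi^{**}(z)$ for every $S$.

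\textbf{Upper bound.} First I represent $\psi^{**}$ via Carathéodory. Because $\psi$ is continuous and coercive, it is bounded below and its convex envelope is the lower convex envelope
$$\psi^{**}(z)=\min\Bigl\{\sum_{k=1}^{m+1}\lambda_k\psi(w_k):\lambda_k\ge0,\ \sum_k\lambda_k=1,\ \sum_k\lambda_k w_k=z\Bigr\}.$$
Coercivity is what makes this hold with no closure issues: it confines the points $w_k$ carrying non-negligible weight to a compact set. Fix $\eta>0$ and choose $\lambda_k,w_k$ with $\sum_k\lambda_k\psi(w_k)\le\psi^{**}(z)+\eta$.

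Next, for large $S$ I approximate the weights by rationals with denominator $S$. Let $n_k:=\lfloor\lambda_k S\rfloor$ for $k\le m$, and let $n_{m+1}:=S-\sum_{k\le m}n_k$. Then $|n_k/S-\lambda_k|\le (m+1)/S$. Assign the value $w_k$ to $n_k$ of the indices $h$. The resulting average is $z_S:=\sum_k (n_k/S)w_k$, which satisfies $|z_S-z|\le C/S$ with $C$ depending on the fixed $w_k$. To enforce the exact constraint, shift every $z_h$ by $z-z_S$.

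It remains to estimate the energy of the shifted configuration. The $w_k$ are finitely many fixed points, so uniform continuity of $\psi$ on a compact neighbourhood of $\{w_k\}$ gives
$$\frac1S\sum_h\psi(z_h)\le\sum_k\frac{n_k}{S}\psi(w_k)+o(1)\le\sum_k\lambda_k\psi(w_k)+o(1)$$
as $S\to+\infty$. Therefore $\limsup_S\Phi_S(z)\le\psi^{**}(z)+\eta$. Letting $\eta\to0$ and combining with the lower bound yields \eqref{eq: psi**}; in particular the limit exists.

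\textbf{Main obstacle.} The only delicate point is justifying the Carathéodory representation of $\psi^{**}$ as an attained (or at least $\eta$-approximated) finite convex combination. This needs $\psi$ to be bounded below and coercive, so that the convexification of the epigraph is closed and no limiting procedure at infinity is required. Since approximation within $\eta$ suffices, I would only invoke Carathéodory's theorem for the convex hull of the epigraph together with coercivity. The rounding step itself is elementary.
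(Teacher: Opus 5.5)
Your proof is correct and follows essentially the paper's argument: Jensen for $\Phi_S\ge\psi^{**}$, then Carath\'eodory plus integer rounding of the weights for the upper bound. The only difference is cosmetic: you restore the exact constraint by translating all points by $z-z_S$, while the paper replaces each $z_i$ by $\frac{\lambda_iS}{\lfloor\lambda_iS\rfloor}z_i$ and pads with zeros.

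One correction to your justification. Coercivity alone does not make the Carath\'eodory infimum a minimum: for $m=1$ and $\psi(z)=\sqrt{|z|}$ one has $\psi^{**}\equiv0$, and this value is not a finite convex combination of values of $\psi$ at any $z\neq0$. The representation $\psi^{**}(z)=\inf\{\sum_k\lambda_k\psi(w_k):\sum_k\lambda_k w_k=z\}$ that you actually use holds for a different reason: the convex hull of a finite nonnegative function is finite and convex, hence continuous, hence equal to $\psi^{**}$. So your $\eta$-approximate version is the right formulation. It is also slightly more robust than the paper's exact attainment, which relies on the superlinear growth available in the application.
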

\begin{proof}
Let $\widetilde{\psi}(z)$ denote the right-hand side of \eqref{eq: psi**}. We only prove that $\widetilde{\psi}(z)\leq \psi^{**}(z)$ as the converse  inequality is immediately satisfied.

By Carathéodory's theorem (see \cite[Theorem 17.1]{R}), there exist $\{\lambda_1,\dots,\lambda_{m+1}\}\subset[0,1]$ and $\{z_1,\dots,z_{m+1}\}\subset\R^m$ such that
\begin{equation*}
    \sum_{i=1}^{m+1}\lambda_i\psi(z_i)=\psi^{**}(z),  \qquad \sum_{i=1}^{m+1}\lambda_iz_i=z, \qquad \text{ and } \qquad\sum_{i=1}^{m+1}\lambda_i=1.
\end{equation*}
Consider $S$ a positive integer and let
\begin{equation*}
(y_1,\dots,y_{S}):=\Bigl(\,\underbrace{\frac{\lambda_1S}{\lfloor\lambda_1S\rfloor}z_1,\dots,\frac{\lambda_1S}{\lfloor\lambda_1S\rfloor}z_1}_{\lfloor\lambda_1S\rfloor\text{ times} },\dots,\underbrace{\frac{\lambda_{m+1}S}{\lfloor\lambda_{m+1}S\rfloor}z_{m+1},\dots,\frac{\lambda_{m+1}S}{\lfloor\lambda_{m+1}S\rfloor}z_{m+1}}_{\lfloor\lambda_{m+1}S\rfloor\text{ times} }, \underbrace{0,\dots,0}_{S-\sum_{i=1}^{m+1}\lfloor\lambda_i S\rfloor \text{ times}}\hspace{-0.7cm}\Bigr).
\end{equation*}
We have that 
\begin{equation*}
    \frac{1}{S}\sum_{h=1}^Sy_h=  \frac{1}{S}\sum_{i=1}^{m+1}\lfloor\lambda_i S\rfloor\frac{\lambda_iS}{\lfloor\lambda_iS\rfloor}z_i= z
\end{equation*}
and 
\begin{equation*}
     \frac{1}{S}\sum_{h=1}^S\psi(y_h) =\frac{1}{S}\sum_{i=1}^{m+1}\lfloor\lambda_i S\rfloor\psi\Bigl(\frac{\lambda_iS}{\lfloor\lambda_iS\rfloor}z_i\Bigr)+\Bigl(1-\frac{1}{S}\sum_{i=1}^{m+1}\lfloor\lambda_iS\rfloor\Bigr)\psi(0);   
\end{equation*}
therefore, passing to the limit as $S\to+\infty$, we obtain
\begin{equation*}
    \widetilde{\psi}(z)\leq \sum_{i=1}^{m+1}\lambda_i\psi(z_i)= \psi^{**}(z),
\end{equation*}
which is the thesis.
\end{proof}

\begin{proposition}\label{prop: convexity} Let $g$ and $\psi$ be as in Proposition \ref{prop: limite log}. Then, $\psi$ is convex.    
\end{proposition}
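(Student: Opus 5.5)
The plan is to show $\psi\le\psi^{**}$; combined with $\psi^{**}\le\psi$, this gives $\psi=\psi^{**}$, which is convex. Fix $z\in\rr^m$ and $\eta>0$. By Lemma \ref{lemma: psi**}, which applies because $\psi$ is continuous and coercive (continuity follows from the $d$-homogeneity and the two-sided bounds once a local Lipschitz estimate is obtained as in Proposition \ref{prop_conv_fhom_uniforme}; alternatively the construction below can be run with $\psi^{**}$ realized through Carathéodory directly), choose $S\in\NN$ and $z_1,\dots,z_S$ with $\frac1S\sum_h z_h=z$ and $\frac1S\sum_h\psi(z_h)\le\psi^{**}(z)+\eta$. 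Set $w_k:=\frac1S\sum_{h\le k}z_h$, so that $w_0=0$, $w_S=z$, and the increments are $w_h-w_{h-1}=z_h/S$.

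Next I would build a competitor for the capacitary problem on $Q(L)$ with $L$ large, using the dyadic construction from the heuristic outline. Put $\ell:=L^{1/S}$ and split $Q(L)\setminus Q(1)$ into the $S$ frames $Q(\ell^h)\setminus Q(\ell^{h-1})$. On frame $h$, take $\ell^{h-1}$ times a rescaled near-minimizer $v_h$ of the problem on $Q(\ell)\setminus Q(1)$ with datum $z_h/S$, translated by $w_{h-1}$. Then $v=w_{h-1}$ on $\partial Q(\ell^{h-1})$ and $v=w_h$ on $\partial Q(\ell^h)$, and the pieces glue into a $W^{1,d}$ function with $v=0$ on $Q(1)$ and $v-z\in W^{1,d}_0(Q(L))$. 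Because the energy is scale invariant at the critical exponent and $g$ is $d$-homogeneous, this yields
\begin{equation*}
(\log L)^{d-1}\int_{Q(L)}g(\nabla v)\,dx\le (\log L)^{d-1}\sum_{h=1}^S S^{-d}m_\ell(z_h),
\end{equation*}
where $m_\ell(y)$ denotes the minimum in the definition of $\psi$ with $L$ replaced by $\ell$. Since $\log L=S\log\ell$, the right-hand side equals $\frac1S\sum_h(\log\ell)^{d-1}m_\ell(z_h)$. Letting $L\to+\infty$ with $S$ fixed forces $\ell\to+\infty$, and Proposition \ref{prop: limite log} then gives $\psi(z)\le\frac1S\sum_h\psi(z_h)\le\psi^{**}(z)+\eta$. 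Sending $\eta\to0$ concludes the argument.

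The main obstacle is the gluing. Minimizers on $Q(\ell)$ have boundary trace exactly equal to the constant, and the inner condition $v=0$ holds on all of $Q(1)$ rather than only on a boundary, so the frames must match continuously across $\partial Q(\ell^{h-1})$. This works because the rescaled function on frame $h$ equals $w_{h-1}$ on $Q(\ell^{h-1})$, and restricting to the frame it matches the constant inside. I therefore need to check that a function equal to $w_{h-1}$ on $\overline{Q}(\ell^{h-1})$ and to $w_h$ near $\partial Q(\ell^h)$, defined piecewise across the frames, belongs to $W^{1,d}$; this holds because the traces agree on each $\partial Q(\ell^{h})$. A minor remaining point is that the minimum is attained for each fixed $\ell$, which follows from quasiconvexity or from working with near-minimizers.
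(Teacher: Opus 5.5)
Your proposal is correct and is essentially the paper's proof. The paper also reduces to $\psi\le\psi^{**}$ via Lemma \ref{lemma: psi**} and $d$-homogeneity. It then glues $S$ rescaled minimizers, translated by partial sums, on the frames $Q(L^h)\setminus\overline{Q}(L^{h-1})$ of $Q(L^S)$, uses $(\log L^S)^{d-1}=S^{d-1}(\log L)^{d-1}$, and lets first $L\to+\infty$ and then $S\to+\infty$. The differences are only cosmetic: you fix the outer cube with frame ratio $L^{1/S}$, data $z_h/S$ and an $\eta$-optimal family, whereas the paper fixes the ratio $L$, takes an exact minimizing family $(z_h^*)$ with $\sum_h z_h^*=z$, and passes to the limit in $S$ at the end.
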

\begin{proof}
We prove that $\psi \leq \psi^{**}$, the other inequality being trivially satisfied. Let $z\in\rr^m$. By the $d$-homogeneity of $g$, we have that $\psi$ is $d$-homogeneous as well; hence, in light of Lemma \ref{lemma: psi**}, we have
 \begin{equation}\label{psi**}
     \psi^{**}(z)=\lim_{S\to+\infty}\min\Bigl\{\frac{1}{S}\sum_{h=1}^S\psi(z_h): \frac{1}{S}\sum_{h=1}^Sz_h=z\Bigr\}=\lim_{S\to+\infty}S^{d-1}\min\Bigl\{\sum_{h=1}^S\psi(z_h):\sum_{h=1}^Sz_h=z\Bigr\}.
 \end{equation}
Let $L,S$ be fixed positive integers. Consider $(z_1^*,\dots,z_S^*)$ such that $\sum_{h=1}^Sz^*_h=z$ and
\begin{equation}\label{scelta zh}
    \sum_{h=1}^S\psi(z^*_h)= \min\Bigl\{\sum_{h=1}^S\psi(z_h):\sum_{h=1}^Sz_h=z\Bigr\},
\end{equation}
and, for every $h\in\{1,\dots,S\}$, consider a function $v_h$ such that $v_h-z_h^*\in W^{1,d}_0(Q(L);\rr^m), v_h=0$ on $Q(1)$ and 
\begin{equation}\label{scelta vh}
    \min\Bigl\{\int_{Q(L)}g(\nabla v)\, dx : v-z_h^*\in W^{1,d}_0(Q(L); \rr^m): v=0 \text{ on } Q(1)\Bigr\}= \int_{Q(L)}g(\nabla v_h)\,dx.
\end{equation}
 Now we define the function $w:Q(L^S)\to\rr^m$ as
\begin{equation*}
    w(x):=\begin{cases}
    v_1(x) & \text{ if } x\in \Q(L), \\
        v_h\Bigl(\displaystyle\frac{x}{L^{h-1}}\Bigr)+\displaystyle\sum_{l=1}^{h-1}z_l^* & \text{ if } x\in Q(L^{h})\setminus \Q(L^{h-1}), h\in\{2,\dots,S\},
    \end{cases}
\end{equation*}
and we note that $w-z\in W^{1,d}_0(Q(L^S);\rr^m)$ and $w=0$ on $Q(1)$. Using the $d$-homogeneity of $g$ and \eqref{scelta vh}, we have
\begin{align*}
    \int_{Q(L^S)}g(\nabla w)\,dx & = \sum_{h=1}^S \int_{Q(L^{h})\setminus\Q(L^{h-1})} \frac{1}{(L^{h-1})^d}g\Bigl(\nabla v_h\Bigl(\frac{x}{L^{h-1}}\Bigr)\Bigr)\,dx \\
    & =  \sum_{h=1}^S \int_{Q(L)}g(\nabla v_h)\,dx \\
    & = \sum_{h=1}^S     \min\Bigl\{\int_{Q(L)}g(\nabla v)\, dx : v-z_h^*\in W^{1,d}_0(Q(L); \rr^m): v=0 \text{ on } Q(1)\Bigr\}.
\end{align*}
Therefore, multiplying by $(\log L^S)^{d-1}$ we obtain
\begin{multline*}
( \log L^S)^{d-1}\int_{Q(L^S)}g(\nabla w)\,dx \\ =    S^{d-1}\sum_{h=1}^S (\log L)^{d-1}    \min\Bigl\{\int_{Q(L)}g(\nabla v)\, dx : v-z_h^*\in W^{1,d}_0(Q(L); \rr^m): v=0 \text{ on } Q(1)\Bigr\},
\end{multline*}
and passing to the limit as $L\to+\infty$, we get
\begin{equation*}
    \psi(z)\leq S^{d-1}\sum_{h=1}^S\psi(z_h^*) =  S^{d-1}\min\Bigl\{\sum_{h=1}^S\psi(z_h):\sum_{h=1}^Sz_h=z\Bigr\}
\end{equation*}
in virtue of \eqref{scelta zh}. Recalling \eqref{psi**} and letting $S\to+\infty$ we have $\psi(z)\leq \psi^{**}(z)$, which concludes the proof. 
\end{proof}

\section{Approximating capacitary-type energy densities}
\label{Sec: Approximating capacitary-type energy densities}

In this section we introduce some auxiliary functions that, asymptotically, shall constitute our energy density of capacitary-type. 

We let $T, \ell, L,\s$ be positive parameters satisfying
\begin{equation*}
T\in\NN,\,\, T>2 ,\qquad \ell\leq 1, \qquad  L>4, \qquad  \s <1/T,  
\end{equation*} 
and recalling the definitions of $\F_\s^T$, $f^T_{\rm hom}$, and $f_{\rm hom}$ in \eqref{funz unco tronc loc}, \eqref{f^T_hom}, and \eqref{f_hom}, respectively, we let
\begin{align}
\label{def_minimi_approx}
    \f^T_{\ell,L,\sigma}(z) & :=
     \inf\{ \F^T_{\sigma}(v, Q(L)\setminus \overline{Q}(\ell)) : v\in \A_{\mu_\sigma}(Q(L); \rr^m), \\ \notag
     & \qquad \qquad v(x)= 0 \text{ for } \ms\text{-a.e. } x\in\overline{Q}(\ell+\sigma T),
     v(x)= z \text{ for } \ms\text{-a.e. } x\in Q(L-\sigma T)^c\}, \\ \notag
    \f^T_{L}(z) & := \min\Bigl\{ \int_{Q(L)}f^T_{\rm hom}(\nabla v)\, dx : v-z\in W^{1,d}_0(Q(L);\mathbb{R}^m), v= 0 \text{ on } Q(1)\Bigr\}, \\    
\label{limite_in_L}
    \f^T(z) & := \lim_{L\to+\infty} (\log L)^{d-1}\f^T_{L}(z), \\
\label{limit_denisty}
    \f(z) & := \lim_{L\to+\infty}(\log L)^{d-1}\min\Bigl\{ \int_{Q(L)}f_{\rm hom}(\nabla v)\, dx : v-z\in W^{1,d}_0(Q(L);\mathbb{R}^m), v= 0 \text{ on } Q(1)\Bigr\},
    \end{align}
for every $z\in\rr^m$. 

\begin{remark}\label{rmk: funzioni ausiliarie}
The following properties hold true:
\begin{itemize}
\item[(i)] recalling that the energy $\F^T_\s$ accounts for pairwise interactions with range at most $\s T$ only, for any function $v$ admissible for $\f^T_{\ell,L,\s}$ we have
\begin{equation*}
    \F^T_{\sigma}(v, Q(L)\setminus \overline{Q}(\ell))=\F^T_{\sigma}(v, Q(L))=\F^T_{\sigma}(v-z, \rr^d);
\end{equation*}
    \item[(ii)] functions $\f^T_L$ and $\f$ are well-defined as minimum problems since $f^T_{\rm hom}$ and $f_{\rm hom}$ are quasiconvex functions and coerciveness is ensured by the boundary conditions and Poincaré inequality; 
    \item[(iii)] taking into account \eqref{in rmk f_hom^T} and the fact that $f^T_{\rm hom}$ and $f_{\rm hom}$ are $d$-homogeneous, we have that $\f^T$ and $\f$ are well defined in light of  Proposition \ref{prop: limite log} and are convex by Proposition \ref{prop: convexity};
     \item[(iv)] by (H) and the $d$-homogeneity of $f_{\rm hom}^T$ and $f_{\rm hom}$, we have that the functions $\f_{\ell,L,\s}^T, \f^T_L, \f^T,$ and $\f$ are $d$-homogeneous;
     \item[(v)] it holds that
     \begin{equation*}
         \f^T_{\ell,L,\s}=\f^T_{c\ell,cL,c\s}
     \end{equation*}
     for every $c>0$. Note indeed that, given any function $v\in \A_{{\mu_\sigma}}(Q(L); \rr^m)$ satisfying $v=0$ $\ms$-a.e. on $\overline{Q}(\ell+\sigma T)$ and $v=z$ $\ms$-a.e. on $Q(L-\sigma T)^c$, we have that
     \begin{equation*}
         w(x):=v\Bigl(\frac{x}{c}\Bigr), \qquad x\in \rr^d,
     \end{equation*}
     is such that $w\in \A_{\ms}(Q(cL);\rr^m)$ with $w=0$ $\ms$-a.e. on $\overline{Q}(c\ell+c\sigma T)$ and $w=z$ $\ms$-a.e. on $Q(cL-c\sigma T)^c$ and, by (H),
     \begin{align*} \notag
        \F^T_{c\s}&(w, Q(cL)\setminus \overline{Q}(c\ell)) \\ \notag
        &=\int_{\mathbb{R}^{d}}\int_{(Q(cL)\setminus \overline{Q}(c\ell))_{\mu_{c\s}}(\xi)}f^T(\xi, w(x+c\s\xi)-w(x))\, d\mu_{c\s}(x)\, d\mu_1(\xi)\\ \notag
        &=\int_{\mathbb{R}^{d}}\int_{(Q(L)\setminus \overline{Q}(\ell))_{\mu_{\s}}(\xi)}f^T(\xi, v(x+\s\xi)-v(x))\, d\mu_{\s}(x)\, d\mu_1(\xi)=\F^T_{\s}(v, Q(L)\setminus \overline{Q}(\ell)) ;
\end{align*}
      \item[(vi)] for $\ell\leq\ell'$ and $L\leq L'$ we have
    \begin{equation*}
        \f^T_{\ell,L,\s}\leq\f^T_{\ell',L,\s}, \qquad \f^T_{\ell,L',\s}\leq\f^T_{\ell,L,\s}, \qquad \text{ and } \qquad \f^T_{L'}\leq\f^T_{L}.
    \end{equation*}
\end{itemize}
\end{remark}
We establish uniform bounds for the functions in \eqref{def_minimi_approx}.

\begin{lemma} Let $\f^T_{\ell,L,\sigma}$ be as in \eqref{def_minimi_approx}, then there exists a positive constant $C$ such that
\begin{equation}
\label{crescita_minimi_discreti}
    (\log L)^{d-1}\f_{\ell,L,\s}^T(z)\leq C|z|^d
\end{equation}
and
\begin{equation}
\label{equilip_minimi_discreti}
    |(\log L)^{d-1}\f_{\ell,L,\s}^T(z)-(\log L)^{d-1}\f_{\ell,L,\s}^T(z')|\leq C(|z|^{d-1}+|z'|^{d-1})|z-z'|
\end{equation}
for every $T\in\NN,T>2, \ell\leq1,L>4$ and $\s < 1/T$, and for every $z,z'\in \rr^m$. 
\end{lemma}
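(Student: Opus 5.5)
For the growth bound \eqref{crescita_minimi_discreti} I will build an explicit competitor. By $d$-homogeneity (Remark \ref{rmk: funzioni ausiliarie}(iv)) it suffices to take $|z|=1$. By the monotonicity in Remark \ref{rmk: funzioni ausiliarie}(vi) I may also enlarge $\ell$ to $1$, since $\f^T_{\ell,L,\s}\le\f^T_{1,L,\s}$.

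The competitor is $v(x)=z\,\eta(x)$, where $\eta$ is a scalar logarithmic cut-off:
\begin{equation*}
\eta(x)=\min\Bigl\{1,\max\Bigl\{0,\frac{\log(2|x|_\infty/a)}{\log(b/a)}\Bigr\}\Bigr\},
\end{equation*}
with $a=1+\s T\le 2$ and $b=L-\s T\ge L-1$. This function vanishes on $\overline Q(1+\s T)$ and equals $z$ outside $Q(L-\s T)$. Since $\s<1/T$ and $L>4$, we have $\log(b/a)\ge c\log L$. The function $\eta$ is Lipschitz, with $|\nabla\eta(x)|\le C/(|x|_\infty\log L)$ on the annulus where it is not constant.

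To estimate the energy I will use $f^T\le M$ together with (G1). Because $|\xi|\le T$ and $\s T<1$, every interaction $x\mapsto x+\s\xi$ has length less than $1$. Two cases then arise.
\begin{itemize}
\item[(a)] If the segment $[x,x+\s\xi]$ stays in the annulus $\{a/2\le|y|_\infty\le b/2\}$ (enlarged by $1$), then $|v(x+\s\xi)-v(x)|\le C\s|\xi|/(|x|_\infty\log L)$. This holds because $|y|_\infty\ge|x|_\infty/2$ along the segment once $|x|_\infty\ge 1/2$.
\item[(b)] Otherwise the difference is $0$.
\end{itemize}
Summing over these contributions gives
\begin{equation*}
\F^T_\s(v,Q(L)\setminus\overline Q(1))\le C\int M(\xi)|\xi|^d\,d\mu_1(\xi)\cdot\frac{1}{(\log L)^d}\int_{\{1/4\le|x|_\infty\le L\}}\frac{d\mu_\s(x)}{\s^{-d}|x|_\infty^d}.
\end{equation*}
The measure $\mu_\s$ carries the factor $\s^{-d}$, which is exactly compensated by the $\s^d$ coming from the difference quotient. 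In the discrete case I will compare the sum with the integral; this works because the lattice spacing $\s$ is below $1/T<1$ and $|x|_\infty\ge1/4$, so $|x|_\infty^{-d}$ varies by a bounded factor on each cell. The remaining integral is $\int_{1/4}^{L}\rho^{-1}\,d\rho\sim\log L$. Altogether the energy is at most $C(\log L)^{1-d}$, which gives \eqref{crescita_minimi_discreti}, with $C$ independent of $T,\ell,L,\s$.

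For the Lipschitz estimate \eqref{equilip_minimi_discreti}, fix $z,z'$ and $\eta>0$, and take $v$ that is $\eta$-optimal for $\f^T_{\ell,L,\s}(z)$. I will set $w:=v+(z'-z)\zeta$, where $\zeta$ is the scalar cut-off from the previous step. Then $w$ is admissible for $z'$: $\zeta$ vanishes on $\overline Q(1+\s T)\supseteq\overline Q(\ell+\s T)$ and equals $1$ outside $Q(L-\s T)$. Writing $\Delta_\xi u=u(x+\s\xi)-u(x)$, assumption (L) gives
\begin{equation*}
|f^T(\xi,\Delta_\xi w)-f^T(\xi,\Delta_\xi v)|\le CM(\xi)\bigl(|\Delta_\xi v|^{d-1}+|\Delta_\xi w|^{d-1}\bigr)|z-z'||\Delta_\xi\zeta|.
\end{equation*}
Integrating and applying Hölder with exponents $d/(d-1)$ and $d$ yields
\begin{equation*}
\f(z')-\f(z)-\eta\le C|z-z'|\bigl(A_v+A_w\bigr)^{(d-1)/d}B^{1/d},
\end{equation*}
where $B:=\int\!\int M|\Delta_\xi\zeta|^d$ and $A_v,A_w$ are the analogous $M$-weighted integrals of $|\Delta_\xi v|^d$ and $|\Delta_\xi w|^d$. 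By the computation above, $B\le C(\log L)^{1-d}$.

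The main obstacle is bounding $A_v$ and $A_w$. The weight is $M$ rather than $f^T$, so these quantities cannot be read off from the energy of $v$ directly. To handle this I will use Lemma \ref{lemma: controllo long range on frames} on $\rr^d$, via \eqref{long-range Rd}, applied to $v-z$ as in Remark \ref{rmk: funzioni ausiliarie}(i). This gives $\int|\Delta_\xi v|^d\,d\mu_\s\le C(1+|\xi|^d)G_\s(v-z,\rr^d)$. Combining \eqref{eq: riferimento lower} with (G1) then yields $A_v\le C\,\F^T_\s(v)\le C(\f(z)+\eta)\le C|z|^d(\log L)^{1-d}$, and $A_w\le C(A_v+|z-z'|^dB)$. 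Multiplying through by $(\log L)^{d-1}$ gives a bound of the form $C|z-z'|(|z|+|z'|)^{d-1}$. Exchanging $z$ and $z'$ and letting $\eta\to0$ concludes the proof.
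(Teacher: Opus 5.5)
Your proof is correct, but it takes a different route from the paper in both halves.

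For \eqref{crescita_minimi_discreti}, the paper never writes down an explicit competitor. It takes an arbitrary smooth $u$ with $u=0$ on $\overline{Q}(\ell+\s(T+1))$ and $u-z$ compactly supported in $Q(L-\s(T+1))$, and transfers it to the lattice by cell averages in the discrete case. It then bounds $G_\s(\hat u-z,\rr^d)$ by $C\int|\nabla u|^d$ via Jensen, controls all interaction ranges through \eqref{long-range Rd} and (G1), and minimizes over $u$. This gives $\f^T_{\ell,L,\s}(z)\le C\,{\rm Cap}_d\bigl(Q(\ell+\s(T+1)),Q(L-\s(T+1))\bigr)|z|^d$, which is estimated by \eqref{cap balls}.

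Your logarithmic profile in $|x|_\infty$ is more elementary. It bypasses Lemma \ref{lemma: controllo long range on frames} and capacities altogether, using only (G1) and the fact that $\s|\xi|\le\s T<1$ is small compared with the hole. The paper's route, in return, gives a comparison with the continuum $d$-capacity that is valid for any smooth competitor.

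One claim in your case (a) needs repair. The inequality $|y|_\infty\ge|x|_\infty/2$ along $[x,x+\s\xi]$ is false in general, because a segment of length close to $1$ may pass through the hole. You only need a bound where $\nabla\eta\neq0$, i.e.\ where $|y|_\infty\ge a/2\ge1/2$. There one has $|y|_\infty\ge|x|_\infty/4$: use $|y|_\infty\ge1/2$ if $|x|_\infty\le2$, and $|y|_\infty\ge|x|_\infty-1$ otherwise. The same observation covers the Lipschitz part, where you keep the original $\ell$: the extra points with $|x|_\infty<1/2$ contribute only $O((\log L)^{-d})$ to $B$.

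For \eqref{equilip_minimi_discreti}, the paper composes an almost minimizer $v_z$ with the linear map $\Theta=\frac{|z'|}{|z|}\mathcal{R}^{z'}_{z}$. This map fixes the boundary value $0$, sends $z$ to $z'$, and satisfies $|(\Theta-I)y|\le C\frac{|z-z'|}{|z|}|y|$ for every $y\in\rr^m$. Assumption (L) then bounds the energy difference pointwise by a multiple of the energy of $v_z$, with no H\"older inequality and no cut-off. The cost is that $z=0$ or $z'=0$ must be handled separately.

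Your additive correction $v+(z'-z)\zeta$ treats all $z,z'$ uniformly and reuses the cut-off from the first part. The price is the H\"older step and the bound on $B$. Your control of $A_v$ uses the same chain \eqref{long-range Rd}, \eqref{eq: riferimento lower}, Remark \ref{rmk: funzioni ausiliarie}(i) as the paper.
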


\begin{proof}
Let $z \in \mathbb{R}^m$ and $u \in C^1(\mathbb{R}^d;\mathbb{R}^m)$ such that $u-z \in C^1_c(Q(L-\sigma( T+1));\mathbb{R}^m)$ and $u=0$ in $\overline{Q}(\ell+\sigma (T+1))$. We show that there exist a function $\hat{u}$ admissible for the minimum problem \eqref{def_minimi_approx} and a constant $C$ depending on $r_0$ and $d$ such that   \begin{equation}\label{claim sui bound}
        G_{\sigma}(\hat{u}-z,\mathbb{R}^d)\leq C\int_{Q(L-\s(T+1))}|\nabla u|^d\, dx,
    \end{equation}
    where $G_{\sigma}(\hat{u}-z,\mathbb{R}^d)$ is defined as in \eqref{Funzionali ausiliari G} with $A=\mathbb{R}^d$. To this end, we treat separately the continuous and the discrete case.
    
    If $\ms=\s^{-d}\mathcal{L}^d$, we let $\hat{u}=u$. Recalling that in this case $G_{\sigma}(u-z,\mathbb{R}^d)$ corresponds to
    \begin{equation*}
        \int_{B(r_0)}\int_{\rr^d}
\Bigl|\frac{u(x+\s\xi)-u(x)}{\s}\Bigr|^d\,dx \, d\xi,
    \end{equation*}
    and using that
    \begin{equation}\label{eq: identià integrale}
        \frac{u(x+\sigma \xi)-u(x)}{\sigma}=\int_{0}^{1}\partial_{\xi}u(x+t\sigma \xi )\, dt,
    \end{equation}
    by Jensen's inequality and Fubini's Theorem we obtain
    \begin{equation*}
        \begin{split}
            G_{\sigma}(u-z,\mathbb{R}^d)&=\int_{B(r_0)}\int_{\rr^d} \Bigl|\int_{0}^{1}\partial_{\xi}u(x+t\sigma \xi )\, dt \Bigr|^d \, dx\,d\xi\\
            &\leq \int_{B(r_0)}\int_{\rr^d} \int_0^1|\partial_{\xi}u(x+t\sigma  \xi)|^d\, dt\, dx\,d\xi\\
            &\leq\int_{B(r_0)}|\xi|^d\int_0^1\int_{\rr^d}|\nabla u(x)|^d\, dx\, dt\,d\xi
            \leq C \int_{Q(L-\s(T+1))}|\nabla u |^d\, dx.
        \end{split}
    \end{equation*}

 If $\ms=\sum_{\alpha\in \s\ZZ^d}\delta_\alpha$, we define the discrete function
    \begin{equation*}
        \hat{u}(\alpha):=\frac{1}{\sigma^d}\int_{\alpha+[0,\sigma)^d}u(x)\, dx, \qquad \alpha\in \s\ZZ^d.
    \end{equation*}
Recalling that now
    \begin{equation*}
        G_\s(\hat{u}-z, \rr^d)= \sum_{k=1}^d\sum_{\alpha\in \s\ZZ^d} |\hat{u}(\alpha+\s e_k)-\hat{u}(\alpha)|^d,
    \end{equation*}
    we apply \eqref{eq: identià integrale} with $\xi\in\{e_1,\dots, e_d\}$ and combine Jensen's inequality with Fubini's Theorem to obtain 
    \begin{equation*}
        \begin{split}
            G_{\sigma}(\hat{u}-z,\mathbb{R}^d)
            &=\sum_{k=1}^d\sum_{\alpha\in \sigma\mathbb{Z}^d}\Bigl|\frac{1}{\sigma^d}\int_{\alpha+[0,\sigma)^d}u(x+ \sigma e_k)-u(x)\, dx\Bigr|^d\\
            & = \sum_{k=1}^d\sum_{\alpha\in \sigma\mathbb{Z}^d}\Bigl|\frac{1}{\sigma^d}\int_{\alpha+[0,\sigma)^d}\s\int_{0}^{1}\partial_{e_k}u(x+t\sigma e_k )\, dt\, dx\Bigr|^d\\   
            &\leq \sum_{k=1}^d\sum_{\alpha\in \sigma\mathbb{Z}^d}\int_{\alpha+[0,\sigma)^d}\int_0^1\left|\partial_{e_k}u(x+t\sigma  e_k)\right|^d\, dt\, dx\\
            &\leq C\sum_{\alpha\in \sigma\mathbb{Z}^d}\int_0^1\int_{\alpha+[0,2\sigma)^d}|\nabla u(x)|^d\, dx\, dt
            \leq C \int_{Q(L-\s(T+1))}|\nabla u |^d\, dx,
            \end{split}
    \end{equation*}
which proves \eqref{claim sui bound}.

Taking into account Remark \ref{rmk: bound uniforme}(i) and Remark \ref{rmk: funzioni ausiliarie}(i), we apply (G1), \eqref{long-range Rd}, and \eqref{claim sui bound} to get 
    \begin{align*}
    \mathcal{F}^T_\sigma(\hat{u},Q(L)\setminus\overline{Q}(\ell))
    & = \mathcal{F}^T_\sigma(\hat{u}-z,\rr^d) \\
    & \leq \int_{\rr^d}M(\xi)\int_{\rr^d}|\hat{u}(x+\s\xi)-\hat{u}(x)|^d\,d\ms(x)\,d\mu_1(\xi) \\
    & \leq C\Bigl(\int_{\rr^d}M(\xi)(1+|\xi|^d)\,d\mu_1(\xi)\Bigr) G_\s(\hat{u}-z,\rr^d) \leq C\int_{Q(L-\s(T+1))}|\nabla u|^d\, dx,
    \end{align*}
    and then, by the arbitrariness of $u$, we conclude
\begin{align*} \notag
    &(\log L)^{d-1}\f^T_{\ell,L,\sigma}(z) \\ \notag
    & \leq C(\log L)^{d-1}\inf\Bigl\{\int_{Q(L-\sigma (T+1))}|\nabla v|^d\, dx: v-z\in C^1_c(Q(L-\sigma (T+1);\mathbb{R}^m),  v=0  \text{ on } Q(\ell+\sigma (T+1))\Bigr\}\\ \notag
    &=C (\log L)^{d-1} {\rm Cap}\displaystyle_{d}\bigl(Q(\ell+\sigma (T+1)),Q(L-\sigma(T+1))\bigr)|z|^d\\
\notag     &\leq C(\log L)^{d-1}\Bigl(\log\Bigl(\frac{L-\sigma (T+1)}{\ell+\sigma(T+1)}\Bigr)\Bigr)^{1-d}|z|^d\\ 
    &\leq C(\log L)^{d-1}\Bigl(\log\Bigl(\frac{L-3/2}{5/2}\Bigr)\Bigr)^{1-d}|z|^d \leq  C|z|^d,
\end{align*}
which proves \eqref{crescita_minimi_discreti}.
    
Now, we prove \eqref{equilip_minimi_discreti} directly addressing both cases. We first observe that, if $z=0$ or $z'=0$, the inequality follows  by \eqref{crescita_minimi_discreti}. Then, we can suppose $z$ and $z'$ both not null and consider the map $\Theta: \mathbb{R}^{m}\rightarrow \mathbb{R}^{m}$ defined by
        \begin{equation*}
            \Theta(\zeta)=\frac{|z'|}{|z|}\mathcal{R}^{z'}_{z}(\zeta), \qquad \zeta\in \rr^m,
        \end{equation*}
        where $\mathcal{R}^{z'}_{z}$ is a rotation that maps $z$ into $\frac{|z|}{|z'|}z'$. Note that $\Theta(0)=0$, $\Theta(z)=z'$ and 
        \begin{equation}
            |\Theta(\zeta)|= \frac{|z'|}{|z|}|\zeta|, \qquad | (\Theta-I)(\zeta)|\leq C \frac{|z-z'|}{|z|}|\zeta|
            \label{31}
        \end{equation}
        for every $\zeta\in\rr^m$, where we let $I$ denote the identity matrix of $\rr^{m\times m}$. 
        
        Fix $\eta>0$ and let $v_{z}$ be (almost) optimal for \eqref{def_minimi_approx}; i.e., $\mathcal{F}^T_{\s}(v_{z},Q(L)\setminus \overline{Q}(\ell)) \leq(1+\eta)\f^T_{\ell,L,\s}(z)$.
 We set $v_{z'}:=\Theta \circ v_z$ and we note that $v_{z'}$ is admissible for the minimum problem $\f^T_{\ell,L,\s}(z')$. 
 Then,
 \begin{align}
\notag \f^T_{\ell,L,\s}(z')&\leq  \mathcal{F}^T_{\s}(v_{z'},Q(L)\setminus \overline{Q}(\ell))\\ \label{stima_funzione_ottimizzante}
     & \leq (1+\eta)\f^T_{\ell,L,\s}(z)+\mathcal{F}^T_{\s}(v_{z'},Q(L)\setminus \overline{Q}(\ell))-\mathcal{F}^T_{\s}(v_{z},Q(L)\setminus \overline{Q}(\ell)).
 \end{align}
 We set $\Delta_\s^\xi w(x):=w(x+\s\xi)-w(x)$. By assumption (L) and \eqref{31} we get
 \begin{align*}
& |f^T(\xi,\Delta_\s^\xi v_{z'}(x))-f^T(\xi,\Delta_\s^\xi v_{z}(x))|    \\      &\leq CM(\xi)(|\Delta_\s^\xi v_{z'}(x)|^{d-1}+|\Delta_\s^\xi v_{z}(x)|^{d-1})|\Delta_\s^\xi v_{z'}(x)-\Delta_\s^\xi v_{z}(x)|\\ \notag
                &=CM(\xi)(|\Theta(\Delta_\s^\xi v_{z}(x))|^{d-1}+|\Delta_\s^\xi v_{z}(x)|^{d-1})|(\Theta-I)(\Delta_\s^\xi v_{z}(x))|\\ \notag
                &\leq CM(\xi)\Bigl[\Bigl(\frac{|z'|}{|z|}\Bigr)^{d-1}|\Delta_\s^\xi v_{z}(x)|^{d-1}+|\Delta^{\xi}_{\s}v_{z}(x)|^{d-1}\Bigr] \frac{|z-z'|}{|z|}|\Delta^{\xi}_{\s}v_{z}(x)|\\
                &\leq CM(\xi)\frac{|z|^{d-1}+|z'|^{d-1}}{|z|^{d}}|z-z'||\Delta^{\xi}_{\s}v_{z}(x)|^{d}
        \end{align*}
        for every $x\in \rr^d$. Then, we apply \eqref{long-range Rd} to get
        \begin{align*}  \notag   
       |\mathcal{F}^T_{\s}&(v_{z'},Q(L)\setminus \overline{Q}(\ell))-\mathcal{F}^T_{\s}(v_{z},Q(L)\setminus \overline{Q}(\ell))|\\ \notag
       &\leq \int_{\rr^d}\int_{\rr^d}|f^T(\xi,\Delta^\xi_\sigma v_{z'}(x))-f^T(\xi,\Delta^\xi_\sigma v_z(x))|\, d\ms(x)\,d\mu_1(\xi)\\ \notag
       &\leq C\Bigl(\int_{\mathbb{R}^{d}}M(\xi)\int_{\rr^d} |v_{z}(x+\s\xi)-v_{z}(x)|^{d}\,d\ms(x)\, d\mu_1(\xi)\Bigr)\frac{|z|^{d-1}+|z'|^{d-1}}{|z|^{d}}|z-z'|\\ 
       &\leq C\Bigl(\int_{\mathbb{R}^{d}}M(\xi)\,(1+|\xi|^d) \,d\mu_1(\xi)\Bigr) G_\sigma(v_z-z,\mathbb{R}^d)\frac{|z|^{d-1}+|z'|^{d-1}}{|z|^{d}}|z-z'|.
   \end{align*}
Since $T>2>r_0$, inequality \eqref{eq: riferimento lower} holds, and by (G1) and Remark \ref{rmk: funzioni ausiliarie}(i) we obtain
        \begin{align*}\notag
        |\mathcal{F}^T_{\s}(v_{z'},Q(L)\setminus \overline{Q}(\ell))-\mathcal{F}^T_{\s}(v_{z},Q(L)\setminus \overline{Q}(\ell))|      
       & \leq C\mathcal{F}^{T}_{\s}(v_{z}-z,\rr^d)\frac{|z|^{d-1}+|z'|^{d-1}}{|z|^{d}}|z-z'| \\ 
       & \leq C(1+\eta)\f^{T}_{\ell,L,\s}(z) \frac{|z|^{d-1}+|z'|^{d-1}}{|z|^{d}}|z-z'|.
        \end{align*}
Multiplying by $(\log L)^{d-1}$ on both sides of \eqref{stima_funzione_ottimizzante} and using \eqref{crescita_minimi_discreti} we infer
   \begin{align*}
       (\log L)^{d-1}\f^T_{\ell,L,\s}(z') &\leq (1+\eta)(\log L)^{d-1}\f^T_{\ell,L,\s}(z)\\
       & \quad +(\log L)^{d-1}|\mathcal{F}^T_{\s}(v_{z'},Q(L)\setminus \overline{Q}(\ell))-\mathcal{F}^T_{\s}(v_{z},Q(L)\setminus \overline{Q}(\ell))|\\
       & \leq (1+\eta)\Bigl[(\log L)^{d-1}\f^T_{\ell,L,\s}(z)+C(\log L)^{d-1}\f^{T}_{\ell,L,\s}(z) \frac{|z|^{d-1}+|z'|^{d-1}}{|z|^{d}}|z-z'|\Bigr] \\
       & \leq (1+\eta)\Bigl[(\log L)^{d-1}\f^T_{\ell,L,\s}(z)+C(|z|^{d-1}+|z'|^{d-1})|z-z'|\Bigr].
   \end{align*}
Letting $\eta\to0$ and reversing the roles of $z$ and $z'$ we obtain \eqref{equilip_minimi_discreti}, which concludes the proof.
\end{proof}

The following result illustrates the asymptotics of the energy densities introduced at the beginning of this section.

\begin{proposition}\label{proposition: psi limite} The following hold:
\begin{itemize}
  \item[(i)] let $T\in \NN, T>2, \ell\leq1,$ and $L>4$, and assume that 
  \begin{equation*}
  \ms=\frac{1}{\s^d}\mathcal{L}^d \qquad \text{ or } \qquad \ms=\sum_{\alpha\in \s\ZZ^d}\delta_\alpha    
  \end{equation*}
   for every $\s\in(0,1/T)$. We have
\begin{equation*}
    \lim_{\s\to0}\frac{\f^T_{\ell,L,\s}}{\f^T_{L/\ell}}=1
\end{equation*}
uniformly on $\rr^m\setminus\{0\}$;
\item[(ii)] for every $T\in\NN,T>2$ we have
\begin{equation*}
   \lim_{L\to+\infty} \frac{(\log L)^{d-1}\f^T_{L}}{\f^T}=1
\end{equation*}
uniformly on $\rr^m\setminus\{0\}$;
\item[(iii)] we have that $\{\f^T(z)\}_T$ is an increasing sequence for every $z\in \rr^m$ and
\begin{equation*}
   \lim_{T\to+\infty}\f^T= \f
\end{equation*}
uniformly on compact subsets of $\rr^m$.
\end{itemize}
\end{proposition}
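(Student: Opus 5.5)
By $d$-homogeneity of all the functions involved (Remark \ref{rmk: funzioni ausiliarie}(iv)), uniform convergence of the ratios on $\rr^m\setminus\{0\}$ is equivalent to uniform convergence on $\mathbb{S}^{m-1}$. All three limit objects satisfy $\frac1C\le(\log L)^{d-1}\cdot\le C$ on the sphere: for $\f^T_L$ and $\f^T$ this follows from \eqref{in rmk f_hom^T}, Proposition \ref{prop: limite log} and \eqref{stima uniforme capacita}. Hence in each part it suffices to prove pointwise convergence on $\mathbb{S}^{m-1}$ together with equicontinuity there. Equicontinuity is given by \eqref{equilip_minimi_discreti} for the discrete problems, and by the analogous Lipschitz estimate for $(\log L)^{d-1}\f^T_L$. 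The latter is obtained by the same rotation–dilation argument $v\mapsto\Theta\circ v$, using \eqref{fhom-loc-lip} in place of (L).

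\textbf{Part (i).} By Remark \ref{rmk: funzioni ausiliarie}(v) with $c=1/\ell$, we have $\f^T_{\ell,L,\s}=\f^T_{1,L/\ell,\s/\ell}$, so we may take $\ell=1$ and $A=Q(L)\setminus\overline{Q}(1)$. For fixed $z$ this is precisely the convergence of minima with affine (here piecewise constant, but Lipschitz-extendable) boundary data in Theorem \ref{theorem: Gamma unconstrained}, applied to $f^T$, whose homogenized density is $f^T_{\rm hom}$. Let $g$ be Lipschitz with $g=0$ near $\overline{Q}(1)$ and $g=z$ near $\partial Q(L)$. The constraint sets $\overline{Q}(1+\s T)$ and $Q(L-\s T)^c$ correspond to the strip $\dist_\infty(x,A^c)\le\s T$ in \eqref{A^sT}, up to harmless rounding in the discrete case. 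Therefore
\[
\f^T_{1,L,\s}(z)\to\min\Bigl\{\int_A f^T_{\rm hom}(\nabla v)\,dx: v-g\in W^{1,d}_0(A;\rr^m)\Bigr\}=\f^T_L(z).
\]
Combining this pointwise limit with equicontinuity on the compact sphere and the positive lower bound gives uniform convergence of the ratio.

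\textbf{Part (ii).} The existence of $\f^T$ and the pointwise convergence of $(\log L)^{d-1}\f^T_L$ come from Proposition \ref{prop: limite log} applied to $g=f^T_{\rm hom}$. Uniformity again follows from equicontinuity in $L$ of $(\log L)^{d-1}\f^T_L$ on $\mathbb{S}^{m-1}$ together with the lower bound $\f^T\ge 1/C$ there.

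\textbf{Part (iii).} Monotonicity: for $T<T'$ we have $f^T\le f^{T'}$, hence $f^T_{\rm hom}\le f^{T'}_{\rm hom}$, which passes to $\f^T_L$ and then to $\f^T$. Also $\f^T\le\f$ since $f^T_{\rm hom}\le f_{\rm hom}$.

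For the reverse inequality I would use the uniform convergence $f^T_{\rm hom}\to f_{\rm hom}$ on compacts (Proposition \ref{prop_conv_fhom_uniforme}) and $d$-homogeneity. Given $\eta>0$, for $T$ large we have $f_{\rm hom}\le(1+\eta)f^T_{\rm hom}$ on the unit sphere of $\rr^{m\times d}$, hence everywhere by homogeneity. Comparing the minimum problems then gives $\f_L$-type quantities $\le(1+\eta)\f^T_L$ for every $L$. Multiplying by $(\log L)^{d-1}$ and letting $L\to\infty$ yields $\f\le(1+\eta)\f^T$.

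Uniform convergence on compacts then follows from the uniform bound $\f\le C|z|^d$. The main subtlety I expect is this step: the interchange is legitimate only because the comparison is multiplicative and uniform in $L$, which homogeneity provides. A secondary point is checking the boundary-layer matching with \eqref{A^sT} in the discrete setting of part (i).
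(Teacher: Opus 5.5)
Your proposal is correct and follows essentially the paper's route. In (i) you combine the convergence of minima from Theorem \ref{theorem: Gamma unconstrained} with the equi-Lipschitz bound \eqref{equilip_minimi_discreti} and the capacitary lower bound; in (ii) you use Proposition \ref{prop: limite log} plus equicontinuity; in (iii) you use uniform closeness of $f^T_{\rm hom}$ to $f_{\rm hom}$ with a comparison of minima that is uniform in $L$. The differences are only cosmetic: you prove the Lipschitz estimate for $(\log L)^{d-1}\f^T_L$ directly via $\Theta\circ v$ and \eqref{fhom-loc-lip}, whereas the paper lets $\s\to0$ in \eqref{equilip_minimi_discreti} using (i), and in (iii) your multiplicative comparison replaces the paper's additive one, which is equivalent thanks to the uniform lower bound in \eqref{in rmk f_hom^T}.
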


\begin{proof}
To prove $(i)$, we first show that
\begin{equation}
\label{convergenza_discreto_continuo}
    \lim_{\s\to0}\f^T_{\ell,L,\s} =\f^T_{L/\ell}
\end{equation}
uniformly on compact subsets of $\rr^m$.
By \eqref{equilip_minimi_discreti} it is enough to prove the pointwise convergence. Recalling \eqref{A^sT}, we have that
    \begin{equation*}
        \begin{split}
            \{ v&\in \A_{{\mu_\sigma}}(Q(L); \rr^m):  v(x)= 0 \text{ for } \ms\text{-a.e. } x\in\overline{Q}(\ell+\sigma T),
     v(x)= z \text{ for } \ms\text{-a.e. } x\in Q(L-\sigma T)^c \}\\
     &=\{ v\in \A_{\mu_\s}(Q(L); \rr^m) : v(x)=g(x) \text{ for } \ms\text{-a.e. } x\in Q(L)\setminus \Q(\ell), \text{\rm dist}_\infty(x,[Q(L)\setminus \Q(\ell)]^c)\leq \s T\} \\
     & = \A_{\ms}^{\s T,g}(Q(L)\setminus \Q(\ell)),
        \end{split}
    \end{equation*}
    where $g: \rr^d\to \rr^m$ is a Lipschitz function such that
    \begin{equation*}
 g(x)=
        \begin{cases}
            0 &\text{ if } x \in \overline{Q}(\ell+1), \\
            z &\text{ if } x\in Q(L-1)^c.
        \end{cases}
    \end{equation*}    
    Then, we can rewrite $\f^T_{\ell,L,\sigma}(z)$ as
\begin{equation*}
        \inf\{ \F^T_{\sigma}(v, Q(L)\setminus \overline{Q}(\ell)) : v\in \A_{\ms}^{\s T,g}(Q(L)\setminus \Q(\ell))\}
\end{equation*}
and, applying the second part of Theorem \ref{theorem: Gamma unconstrained}, we obtain
\begin{equation*}
    \lim_{\s\to0}\f^T_{\ell,L,\sigma}(z)  = \min\Bigl\{ \int_{Q(L)\setminus\Q(\ell)}f^T_{\rm hom}(\nabla v)\, dx : v-g\in W^{1,d}_0(Q(L)\setminus\Q(\ell);\mathbb{R}^m)\Bigr\}.
\end{equation*}
Upon setting $v=0$ in $Q(\ell)$, since $f^T_{\rm hom}(0)=0$, we get
\begin{align*}
     \lim_{\s\to0}\f^T_{\ell,L,\sigma}(z) 
     & = \min\Bigl\{ \int_{Q(L)}f^T_{\rm hom}(\nabla v)\, dx : v-z\in W^{1,d}_0(Q(L);\mathbb{R}^m), v= 0 \text{ on } Q(\ell)\Bigr\} \\
     & =\min\Bigl\{ \int_{Q(L/\ell)}f^T_{\rm hom}(\nabla v)\, dx : v-z\in W^{1,d}_0(Q(L/\ell);\mathbb{R}^m), v= 0 \text{ on } Q(1)\Bigr\}=\f^T_{L/\ell}(z),
\end{align*}
where the second equality follows by a change of variables and the $d$-homogeneity of $f^T_{\rm hom}$, and this proves \eqref{convergenza_discreto_continuo}.

Now, we observe that there exists a positive constant $C$ such that
    \begin{equation}
    \label{crescitaaa}
        \frac{1}{C}|z|^d\leq (\log (L/\ell))^{d-1}\f^T_{L/\ell}(z) \leq C|z|^d
    \end{equation}
    for every $T,\ell, L$ and for every $z \in \mathbb{R}^m$. The upper bound readily follows passing to the limit in \eqref{crescita_minimi_discreti} by \eqref{convergenza_discreto_continuo}. For the lower bound, it suffices to note that $f_{\rm hom}^T(M)\geq |M|^d/C$ by \eqref{in rmk f_hom^T}, and the estimate follows by \eqref{stima uniforme capacita}. 
    
    We conclude the proof of $(i)$. By Remark \ref{rmk: funzioni ausiliarie}(iv), the function $\f^T_{\ell,L,\s}/ \f^T_{L/\ell}$ is $0$-homogeneous for every $\s\in(0, 1/T)$. Letting then $\zeta:=z/|z|$ for every $z\in\rr^m\setminus\{0\}$ and using the lower bound in \eqref{crescitaaa}, we have
    \begin{equation*}
        \biggl|\frac{\f_{\ell,L,\s}^T(z)}{\f^T_{L/\ell}(z)}-1\biggr|=\biggl|\frac{\f_{\ell,L,\s}^T(\zeta)}{\f^T_{L/\ell}(\zeta)}-1\biggr|= \biggl|\frac{\f_{\ell,L,\s}^T(\zeta)-\f_{L/\ell}^T(\zeta)}{\f_{L/\ell}^T(\zeta)}\biggr|\leq C(\log (L/\ell))^{d-1}|\f_{\ell,L,\s}^T(\zeta)-\f_{L/\ell}^T(\zeta)|,
    \end{equation*}
and the conclusion follows since the convergence in \eqref{convergenza_discreto_continuo} is uniform on $\mathbb{S}^{m-1}$.
   
    The proof of $(ii)$ follows the same line as the proof of $(i)$ upon observing that we have
\begin{equation*}
   \lim_{L\to+\infty} (\log L)^{d-1}\f^T_{L} =  \f^T
\end{equation*}
uniformly on compact subsets of $\rr^m$ for every $T>2$.  To see this, we apply \eqref{convergenza_discreto_continuo} with $\ell=1$ in order to pass to the limit in \eqref{equilip_minimi_discreti} as $\s\to0$ and get
\begin{equation}
\label{equilip1}
    |(\log L)^{d-1}\f_{L}^T(z)-(\log L)^{d-1}\f_{L}^T(z')|\leq C(|z|^{d-1}+|z'|^{d-1})|z-z'|
\end{equation}
for every $T>2,L>4$, and for every $z,z'\in \rr^m$. As a consequence, $\{(\log L)^{d-1}\f^T_L\}_L$ is a family of locally equi-lipschitz continuous functions on $\rr^m$, and since the pointwise convergence follows by Proposition \ref{prop: limite log}, the uniform convergence on compact subsets of $\mathbb{R}^m$ is proved.  

    Finally, we prove $(iii)$.  Let $z\in\rr^m$. Recalling the definitions of $\f^T(z)$ and $\f(z)$ in \eqref{limite_in_L} and \eqref{limit_denisty}, respectively, our aim is to prove that
    \begin{equation*}
    \begin{split}
        \lim_{T\to +\infty}&\biggl[\lim_{L\to +\infty}(\log L)^{d-1}\min\Bigl\{ \int_{Q(L)}f^T_{\rm hom}(\nabla v)\, dx : v-z\in W^{1,d}_0(Q(L);\mathbb{R}^m), v= 0 \text{ on } Q( 1)\Bigr\}\biggr]\\
        &=\lim_{L\to +\infty}(\log L)^{d-1}\min\Bigl\{ \int_{Q(L)}f_{\rm hom}(\nabla v)\, dx : v-z\in W^{1,d}_0(Q(L);\mathbb{R}^m), v= 0 \text{ on } Q(1)\Bigr\}
    \end{split}
    \end{equation*}
      uniformly on compact subsets of $\rr^m$.
      
    We claim that there exists a positive constant $C$ with the property that for every $\eta>0$ there exists $T_\eta>0$ such that
    \begin{multline}
    \label{claim_1}
       (\log L)^{d-1}\Bigl|\min\Bigl\{ \int_{Q(L)}f_{\rm hom}(\nabla v)\, dx : v-z\in W^{1,d}_0(Q(L);\mathbb{R}^m), v=0 \text{ on } Q(1)\Bigr\} \\
       -  \min\Bigl\{ \int_{Q(L)}f^T_{\rm hom}(\nabla v)\, dx : v-z\in W^{1,d}_0(Q(L);\mathbb{R}^m), v=0 \text{ on } Q(1)\Bigr\}\Bigr|\leq C\eta|z|^d
    \end{multline}
for every $L>4, T>T_\eta,$ and $z\in\rr^m$.

Let $\eta>0$. By Proposition \ref{prop_conv_fhom_uniforme}, there exists $T_\eta$ such that
\begin{equation}
\label{fhom_vicine_a_piacere}
    |f_{\hom}(M)-f^T_{\hom}(M)|\leq \eta
\end{equation}
for every $M \in \mathbb{R}^{m\times d}$ with $|M|=1$ and for every $T>T_\eta$. Let now $
\overline{v}$ be a minimizer for $\f^T_L(z)$; i.e., for
\begin{equation*}
    \min\Bigl\{ \int_{Q(L)}f^T_{\rm hom}(\nabla v)\, dx : v-z\in W^{1,d}_0(Q(L);\mathbb{R}^m), v= 0 \text{ on } Q(1)\Bigr\}.
\end{equation*}
Recalling that $f^T_{\hom}$ and $f_{\hom}$ are $d$-homogeneous and that $f^T_{\hom}(0)=f_{\hom}(0)=0$, we use \eqref{in rmk f_hom^T}, the upper bound in \eqref{crescitaaa} with $\ell=1$, and  \eqref{fhom_vicine_a_piacere} to get
\begin{equation*}
\begin{split}
   (\log L)^{d-1}&\Bigl[\min\Bigl\{ \int_{Q(L)}f_{\rm hom}(\nabla v)\, dx : v-z\in W^{1,d}_0(Q(L);\mathbb{R}^m), v=0 \text{ on } Q(1)\Bigr\}- \f^T_L(z)\Bigr]\\
   &\leq (\log L)^{d-1}\Bigl[\int_{Q(L)}f_{\rm hom}(\nabla \overline{v})\, dx -\int_{Q(L)}f^T_{\rm hom}(\nabla \overline{v})\, dx\Bigr]\\
   &=(\log L)^{d-1}\int_{Q(L)\setminus\{\nabla \overline{v}=0\}}\Bigl[f_{\rm hom}\Bigl(\frac{\nabla \overline{v}}{|\nabla \overline{v}|}\Bigr)-f^T_{\rm hom}\Bigl(\frac{\nabla \overline{v}}{|\nabla \overline{v}|}\Bigr)\Bigr]|\nabla \overline{v}|^d\, dx\\
   &\leq (\log L)^{d-1}\eta\int_{Q(L)}|\nabla \overline{v}|^d\, dx\\
   &  \leq C(\log L)^{d-1}\eta\int_{Q(L)}f^T_{\rm hom}(\nabla \overline{v})\, dx\\
   &= C\eta(\log L)^{d-1} \f^T_L(z)\leq C\eta |z|^d.
\end{split}
\end{equation*}
At this point we can repeat the same argument choosing $\overline{v}$ minimizer for $$\min\Bigl\{ \int_{Q(L)}f_{\rm hom}(\nabla v)\, dx : v-z\in W^{1,d}_0(Q(L);\mathbb{R}^m), v=0\text{ on } Q(1)\Bigr\},$$ which proves the other inequality in order to get our claim. Finally, passing to the limit as $L\to +\infty$ in \eqref{claim_1} we get
\begin{equation*}
|\f(z)-\f^T(z)|\leq C \eta|z|^d
\end{equation*}
for every $T>T_\eta$ and $z\in\rr^m$; then the conclusion follows by the arbitrariness of $\eta$. Since, by definition, $\f^T\leq\f^{T+1}$ for every $T>2$, the convergence is monotone and the proof is concluded.
\end{proof}

\begin{remark}
By Proposition \ref{proposition: psi limite}$(ii)$, letting $L\to+\infty$ in \eqref{equilip1} we get 
    \begin{equation}\label{equilip2}
     |\f^T(z)-\f^T(z')|\leq C(|z|^{d-1}+|z'|^{d-1})|z-z'|  
    \end{equation}  
for every $T>2$, and for every $z,z'\in \rr^m$, and then $\{\f^T\}_T$ are locally equi-lipschitz continuous functions on $\rr^m$.
\end{remark}

\section{Asymptotic analysis}
\label{Sec: asymptotic analysis}

This section is devoted to the proof of Theorem \ref{thm: main discreto}. We fix $\{\e_j\}_j, \{\dej\}_j, \{\rj\}_j$ positive sequences tending to $0$ as $j\to+\infty$ and such that
\begin{equation}\label{scelta parametri}
    \lim_{j\to+\infty} \frac{\ej}{\rj}=0, \qquad \lim_{j\to+\infty}\frac{\exp(-(\gamma\dej^d)^{\frac{1}{1-d}})}{\rj}=1
\end{equation}
for some $\gamma>0$.
We immediately observe that \eqref{scelta parametri} implies
\begin{equation*}
    \lim_{j\to+\infty}\frac{|\log \rj|^{1-d}}{\dej^d}=\gamma \qquad \text{ and } \qquad \lim_{j\to+\infty}\frac{\bigl(\log\bigl(\frac{\dej}{2\rj}\bigr)\bigr)}{|\log \rj|}=1,
\end{equation*}
which, in turn, yields \begin{equation}\label{dej^d}
\lim_{j\to+\infty}\frac{\bigl(\log\bigl(\frac{\dej}{2\rj}\bigr)\bigr)^{1-d}}{\dej^d}=\gamma.
\end{equation}

\subsection{Auxiliary results}

The main ingredient of our proof is the following `joining lemma on varying domains', which is a variant of that originally devised in \cite{AnsBraJMPA}; see also, e.g., \cite{Sig-dis}. This is used to modify a sequence with equi-bounded energy so that constant Dirichlet boundary conditions are imposed on several concentric dyadic `frames' surrounding the perforations. To make the construction more transparent and consistent with the discrete case, we consider frames with side-lengths that are multiples of $\ej$, that corresponds to the spacing of the cubic lattice in the discrete setting.

\begin{lemma}\label{lemma: joining}
Let $T,k,N$ be fixed integers such that $T>2$, $2\leq k\leq N-3$, and let $\{S_j\}_j$ be a sequence of positive integers such that
\begin{equation}\label{joining geo hyp}
\rj2^{NS_j}< \frac{\dej}{2} 
\end{equation}
for every $j\in\NN$. Let $u_j\in \mathcal{A}_{\varepsilon_j}(\Omega;\mathbb{R}^{m}),j\in\NN,$ be a sequence such that $u_j\rightarrow u$ in $L^{d}(\Omega;\mathbb{R}^{m})$ for some $u \in W^{1,d}(\Omega;\mathbb{R}^{m})$ as $j\to+\infty$ and let
\begin{equation}\label{insieme di indici}
    Z_{j}(\Omega):=\Bigl\{i \in \mathbb{Z}^{d}\cap \frac{1}{\delta_j}\Omega: \text{\rm dist}_\infty(i\delta_{j},\Omega^c)>\delta_j\Bigr\}
\end{equation}
for every $j\in\NN$. Assume that 
\begin{equation}\label{hp: e/r}
    \lim_{j\to+\infty}\frac{\ej}{\rj}=0
\end{equation}
and 
\begin{equation}\label{joining equibdd hyp}
\sup_{j}\F^T_{\ej}(u_j)<+\infty.
\end{equation}
Then, there exists a sequence $w_{j}\in \mathcal{A}_{\varepsilon_{j}}(\Omega;\mathbb{R}^{m}), j\in\NN,$ converging to $u$ in $L^{d}(\Omega;\mathbb{R}^{m})$ as $j\to+\infty$ with the property that for every $j$ large enough, for every $i \in Z_{j}(\Omega)$, and for every $h\in\{1,\dots,S_j\}$ there exists $k_{i,h} \in \{0,\dots, k-1\}$ such that, having set  
\begin{equation}
\label{def_cornici}
C^{i, h}_{j}:=Q\Bigl(i\delta_{j},\Bigl\lfloor\frac{\rj}{\varepsilon_{j}}2^{Nh-k_{i, h}}\Bigr\rfloor\varepsilon_{j}\Bigr)\setminus   \Q\Bigl(i\delta_{j},\Bigl\lfloor\frac{\rj}{\varepsilon_{j}}2^{Nh-k_{i,h}-1}\Bigr\rfloor\varepsilon_{j}\Bigr),
\end{equation}
\begin{equation*}
    \rho^{i, h}_{j}:=\Bigl\lfloor\frac{3}{4}\frac{\rj}{\varepsilon_{j}}2^{Nh-k_{i,h}}\Bigr\rfloor\varepsilon_{j},
\end{equation*}
\begin{equation*}
     u_{j}^{i, h}:=\frac{1}{\mu_{\ej} (C^{i,h}_j)}\int_{C^{i,h}_j} u_{j}\, d\mu_{\ej},
\end{equation*}
the following hold:
\begin{equation}\label{outcornici}
    w_{j}=u_{j}\qquad \mu_{\ej}  \text{-a.e. on } \Omega\setminus \Bigl[ \bigcup_{\substack{i \in Z_{j}(\Omega)\\ h\in\{1,\dots, S_j\}}}C_{j}^{i,h}\Bigr],
\end{equation}
\begin{equation}\label{bordointerno}
    w_{j}=u_{j}^{i, h}\qquad \mu_{\ej} \text{-a.e. on } \Q(i\delta_j,\rho^{i,h}_{j}+\varepsilon_j T)\setminus Q(i\delta_j,\rho^{i, h}_{j}-\varepsilon_j T),
\end{equation}
\begin{equation}\label{diffenergia}
    |\F^T_{\ej}(u_j)-\F^T_{\ej}(w_j)|\leq \frac{C}{k}
\end{equation}
for some $C>0$ independent of $j$.  
\end{lemma}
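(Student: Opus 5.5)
We follow the Ansini--Braides joining scheme: pigeonhole over $k$ candidate dyadic annuli to find a low-energy one, then cut off there with the Poincaré--Wirtinger inequality. For fixed $i\in Z_j(\Omega)$ and $h\in\{1,\dots,S_j\}$, consider the $k$ disjoint candidate annuli
\begin{equation*}
A^{i,h}_{j,n}:=Q\Bigl(i\delta_j,\Bigl\lfloor\frac{\rj}{\ej}2^{Nh-n}\Bigr\rfloor\ej\Bigr)\setminus \Q\Bigl(i\delta_j,\Bigl\lfloor\frac{\rj}{\ej}2^{Nh-n-1}\Bigr\rfloor\ej\Bigr),\qquad n\in\{0,\dots,k-1\}.
\end{equation*}
Each is slightly enlarged by a layer of width $\sim\ej T$ on both sides, and we write $\widetilde A^{i,h}_{j,n}$ for the enlargement. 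Since $k\le N-3$, the annuli with fixed $h$ are pairwise disjoint and lie in $Q(i\delta_j,\rj 2^{Nh})\setminus Q(i\delta_j,\rj 2^{N(h-1)+3})$. Hence annuli with different $h$ are disjoint as well, and their enlargements are also disjoint, because $\ej T\ll \rj$ by \eqref{hp: e/r}. By \eqref{joining geo hyp}, everything stays inside $Q(i\delta_j,\dej)$, and the cubes $Q(i\delta_j,\dej)$ are disjoint for different $i$ and lie in $\Omega$ by the definition \eqref{insieme di indici} of $Z_j(\Omega)$. Using \eqref{eq: riferimento lower} and \eqref{joining equibdd hyp}, we get
\begin{equation*}
\sum_{i,h}\sum_{n=0}^{k-1}G_{\ej}(u_j,\widetilde A^{i,h}_{j,n})\le C\F^T_{\ej}(u_j)\le C,
\end{equation*}
so for each $(i,h)$ we can pick $k_{i,h}$ such that
\begin{equation*}
G_{\ej}(u_j,\widetilde A^{i,h}_{j,k_{i,h}})\le \frac1k\sum_n G_{\ej}(u_j,\widetilde A^{i,h}_{j,n}).
\end{equation*}
We define $C^{i,h}_j$ accordingly, as in \eqref{def_cornici}.

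Next we fix a cut-off $\f^{i,h}_j$ whose discretization is compatible with $\ej\ZZ^d$. It equals $1$ on the frame $\Q(i\delta_j,\rho^{i,h}_j+\ej T)\setminus Q(i\delta_j,\rho^{i,h}_j-\ej T)$ and vanishes outside a smaller annulus strictly inside $C^{i,h}_j$ at distance $\gtrsim\ej T$ from its boundary. Its transition region has width comparable to $\rho^{i,h}_j$, so $|\nabla\f^{i,h}_j|\le C/\rho^{i,h}_j$. We set
\begin{equation*}
w_j:=\f^{i,h}_j u^{i,h}_j+(1-\f^{i,h}_j)u_j \ \text{ on } C^{i,h}_j, \qquad w_j:=u_j \ \text{ elsewhere},
\end{equation*}
so \eqref{outcornici} and \eqref{bordointerno} hold by construction. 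For the energy, $\F^T_{\ej}$ sees only interactions of range $\le\ej T$. The energies of $u_j$ and $w_j$ can therefore differ only on interactions with an endpoint in the transition region, all of which stay inside $\widetilde C^{i,h}_j$. On that set, (L) together with the discrete product rule $\Delta^\xi_{\ej}(\f v)=\f(x+\ej\xi)\Delta^\xi_{\ej}v+v(x)\Delta^\xi_{\ej}\f$ gives
\begin{equation*}
\F^T_{\ej}(w_j,\widetilde C^{i,h}_j)\le C\Bigl(G_{\ej}(u_j,\widetilde C^{i,h}_j)+\Bigl(\frac{\ej}{\rho^{i,h}_j}\Bigr)^d\int_{C^{i,h}_j}|u_j-u^{i,h}_j|^d\,d\mu_{\ej}\Bigr).
\end{equation*}
Here Lemma \ref{lemma: controllo long range on frames} converts interactions of length up to $T$ into nearest-neighbour ones, at a cost of a constant depending on $T$.

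The key step is bounding the mean-oscillation term uniformly in the scale. We apply Proposition \ref{prop: poincare-wirtinger} to $A=Q(1)\setminus\Q(1/2)$ with $A'=A$, $x_0=i\delta_j$ and $\tau\approx \rj2^{Nh-k_{i,h}}$; this requires $\ej<\s_0\tau$, which holds for $j$ large by \eqref{hp: e/r}. The integer parts make $C^{i,h}_j$ only approximately a dilate of $A$, so we handle it by comparing with slightly larger and smaller dilates, or by using the union-of-rectangles version in the appendix. This yields
\begin{equation*}
\int_{C^{i,h}_j}|u_j-u^{i,h}_j|^d\,d\mu_{\ej}\le C\Bigl(\frac{\tau}{\ej}\Bigr)^d G_{\ej}(u_j,\widetilde C^{i,h}_j),
\end{equation*}
and the factor $(\tau/\ej)^d$ cancels exactly with $(\ej/\rho^{i,h}_j)^d$. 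This cancellation is the scale invariance at the critical exponent, and it is what makes the estimate uniform in $h$. Summing over $(i,h)$ and using the choice of $k_{i,h}$ gives \eqref{diffenergia} with the bound $C/k$.

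Finally, $L^d$ convergence. By Proposition \ref{prop: poincare-wirtinger} and the same estimates,
\begin{equation*}
\|w_j-u_j\|^d_{L^d}\le C\sum_{i,h}\int_{C^{i,h}_j}|u_j-u^{i,h}_j|^d\,d\mu_{\ej}\le C\sum_{i,h}(\rj2^{Nh})^d\,\ej^{-d}\,G_{\ej}(u_j,\widetilde C^{i,h}_j)\cdot\ej^d .
\end{equation*}
In the continuous normalization $\mu_{\ej}=\ej^{-d}\mathcal L^d$, the Lebesgue norm carries an extra factor $\ej^d$, and analogously in the discrete case. The bound is then $\le C(\dej)^d\sup_j G_{\ej}(u_j,\Omega)\to0$, since $\rj2^{Nh}\le\dej$. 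Hence $w_j\to u$. The main obstacle is the bookkeeping in the energy estimate: checking in the discrete setting that all interactions of range $\le T$ touching the transition region stay inside the enlarged frame $\widetilde C^{i,h}_j$, and that Proposition \ref{prop: poincare-wirtinger} applies with constants uniform despite the floor functions.
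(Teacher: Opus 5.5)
Your proposal is correct and follows the paper's proof in all essentials: choosing among $k$ dyadic annuli by averaging, a cut-off with gradient of order $1/\rho^{i,h}_j$, the cancellation of $(\varepsilon_j/\rho^{i,h}_j)^d$ against the Poincar\'e--Wirtinger factor, summation over the disjoint cells $Q(i\delta_j,\delta_j)$, and $L^d$ convergence through the factor $\delta_j^d$. The floor-function issue you flag is handled in the paper by applying Proposition~\ref{prop: poincare-wirtinger} on the fixed rescaled set $i\delta_j+\tau\bigl(Q(4)\setminus\overline{Q}(1/2)\bigr)$, with $\tau=\lfloor \frac{r_j}{\varepsilon_j}2^{Nh-l-1}\rfloor\varepsilon_j$ and $A'$ the actual annulus (the mass ratio is at most $2^{d+1}$), so the averaging runs over these larger sets, which overlap at most four times as $l$ varies, rather than over $O(\varepsilon_j T)$-enlargements of the annuli.
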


\begin{proof}
For every $j\in\NN, i\in Z_j(\Omega)$, $h\in\{1,\dots,S_j\}$, and $l\in\{0,\dots,k-1\}$,  consider
\begin{equation*}
C^{i, h}_{j,l}:= Q\Bigl(i\delta_{j},\Bigl\lfloor\frac{\rj}{\varepsilon_{j}}2^{Nh-l}\Bigr\rfloor\varepsilon_{j}\Bigr)\setminus   \Q\Bigl(i\delta_{j},\Bigl\lfloor\frac{\rj}{\varepsilon_{j}}2^{Nh-l-1}\Bigr\rfloor\varepsilon_{j}\Bigr),
\end{equation*}
\begin{equation*}
    \rho^{ h}_{j,l}:=\Bigl\lfloor\frac{3}{4}\frac{\rj}{\varepsilon_{j}}2^{Nh-l}\Bigr\rfloor\varepsilon_{j},
\end{equation*}
\begin{equation*}
     u_{j,l}^{i, h}:=\frac{1}{\mu_{\ej}(C_{j,l}^{i,h})}\int_{C^{i, h}_{j,l}} u_{j}\, d\mu_{\ej}.
\end{equation*}
We define the open sets $C^{i, h}_{j,l,T}$ as
\begin{equation*}
C^{i, h}_{j,l,T}:=Q\Bigl(i\delta_{j},\Bigl\lfloor\frac{\rj}{\varepsilon_{j}}2^{Nh-l}\Bigr\rfloor\varepsilon_{j}-\ej T\Bigr)\setminus   \Q\Bigl(i\delta_{j},\Bigl\lfloor\frac{\rj}{\varepsilon_{j}}2^{Nh-l-1}\Bigr\rfloor\varepsilon_{j}+\ej T\Bigr),
\end{equation*}
and cut-off functions $\chi^{i,h}_{j,l}\in C^\infty_c(C^{i, h}_{j,l,T})$ such that
\begin{equation*}
    \begin{cases}
        0\leq \chi^{i,h}_{j,l} \leq 1 & \text{ on } \rr^d, \\[5pt]
        \chi^{i,h}_{j,l}= 1 & \text{ on } \Q(i\dej,\rho^{h}_{j,l}+\ej T)\setminus Q(i\dej,\rho^{h}_{j,l}-\ej T), \\[5pt]
        |\nabla\chi^{i,h}_{j,l}| \leq \frac{C}{\rho^{h}_{j,l}} & \text{ on } \rr^d.
    \end{cases}
\end{equation*}
We remark that, if we assume that $j$ is large enough, then $C^{i,h}_{j,l,T}$ and $\chi^{i,h}_{j,l}$ are well defined for every $i\in Z_j(\Omega), h\in\{1,\dots,S_j\},$ and $l\in\{0,\dots,k-1\}$. It suffices to note that, by a straightforward computation, the inequalities
\begin{equation*}
   \Bigl\lfloor \frac{\rj}{\ej}2^{Nh-l}\Bigr\rfloor\ej-\ej T>  \rho_{j,l}^{h}+\ej T \qquad \text{ and } \qquad \rho_{j,l}^{h}-\ej T>  \Bigl\lfloor \frac{\rj}{\ej}2^{Nh-l-1}\Bigr\rfloor\ej+\ej T
\end{equation*}
are satisfied for every $h,l$ provided that $\rj/\ej>4(2T+1)$, which holds for $j$ sufficiently large in light of the assumption \eqref{hp: e/r}. Similarly, the estimate on $|\nabla\chi^{i,h}_{j,l}|$ follows by 
\begin{equation*}
    |\nabla\chi^{i,h}_{j,l}|\leq \frac{C}{\Bigl(\Bigl\lfloor\frac{\rj}{\varepsilon_{j}}2^{Nh-l}\Bigr\rfloor\varepsilon_{j}-\ej T\Bigr)-(\rho^{h}_{j,l}+\ej T)} \leq \frac{C}{\rho^{h}_{j,l}}.
\end{equation*}

Let then
\begin{equation*}
    w_{j,l}^{i,h}(x):=\chi^{i,h}_{j,l}(x)u_{j,l}^{i,h}+(1-\chi^{i,h}_{j,l}(x))u_j(x) \qquad \text{ for } \mu_{\ej}\text{-a.e. } x\in\Omega.
\end{equation*}
We aim at estimating $\F^T_{\ej}(w_{j,l}^{i,h},C^{i, h}_{j,l})$. Using Remark \ref{rmk: bound uniforme}(i), we get
\begin{align}\notag
    \F^T_{\ej}(w_{j,l}^{i,h},C^{i, h}_{j,l}) & = \int_{\mathbb{R}^d}\int_{(C^{i, h}_{j,l})_{\mu_{\ej}}(\xi)}f^T(\xi,w^{i,h}_{j,l}(x+\ej\xi)-w^{i,h}_{j,l}(x)) \,d\mu_{\ej}(x)\, d\mu_1(\xi)\\ \label{joining 0}
    & \leq \int_{B(T)}M(\xi)\int_{(C^{i, h}_{j,l})_{\mu_{\ej}}(\xi)}|w^{i,h}_{j,l}(x+\ej\xi)-w^{i,h}_{j,l}(x)|^d\,d\mu_{\ej}(x)\, d\mu_1(\xi).
\end{align}
By definition, we have
\begin{equation*}
   w^{i,h}_{j,l}(x+\ej\xi)-w^{i,h}_{j,l}(x)=(u_{j,l}^{i,h}-u_j(x+\ej\xi)) (\chi^{i,h}_{j,l}(x+\ej\xi)-\chi^{i,h}_{j,l}(x))+(1-\chi^{i,h}_{j,l}(x))(u_{j}(x+\ej\xi)-u_j(x)),
\end{equation*}
and then, by the estimate on $\nabla \chi^{i,h}_{j,l}$, it holds that
\begin{align*}
   | w^{i,h}_{j,l}(x+\ej\xi)-w^{i,h}_{j,l}(x)|^d \leq C |u_{j,l}^{i,h}-u_j(x+\ej\xi)|^d\Bigl|    \frac{\ej}{\rho^{h}_{j,l}}\xi\Bigr|^{d} +  C|u_{j}(x+\ej\xi)-u_{j}(x)|^d. 
\end{align*}
Substituting in \eqref{joining 0} and recalling (G1), we obtain
\begin{align} \notag
     \F^T_{\ej}(w_{j,l}^{i,h},C^{i, h}_{j,l}) & \leq C\Bigl(\frac{\ej}{\rho^{h}_{j,l} }\Bigr)^d\int_{B(T)}M(\xi)|\xi|^d\int_{(C^{i, h}_{j,l})_{\mu_{\ej}}(\xi)}|u_{j,l}^{i,h}-u_j(x+\ej\xi)|^d\, d\mu_{\ej}(x)\, d\mu_1(\xi) \\ \notag
     & \quad+  C\int_{ B(T)}M(\xi)\int_{(C^{i, h}_{j,l})_{\mu_{\ej}}(\xi)}|u_{j}(x+\ej\xi)-u_{j}(x)|^d\, d\mu_{\ej}(x)\, d\mu_1(\xi) \\ \notag
     &  \leq C\Bigl(\frac{\ej}{\rho^{h}_{j,l} }\Bigr)^d\Bigl(\int_{B(T)}M(\xi)|\xi|^dd\mu_1(\xi)\Bigr)\int_{ C^{i, h}_{j,l}}|u_{j,l}^{i,h}-u_j(x)|^d\,d\mu_{\ej}(x) \\ \label{joining 1} 
     & \quad+  C\int_{ B(T)}M(\xi)\int_{C^{i, h}_{j,l}}|u_{j}(x+\ej\xi)-u_{j}(x)|^d\, d\mu_{\ej}(x)\, d\mu_1(\xi),
     \end{align}
where in the last inequality we used that, upon assuming $j$ large enough,
\begin{equation}\label{distanza per long-range}
    \text{\rm dist}
_\infty(C_{j,l}^{i,h}, \Omega^c)>\frac{\dej}{2}>(T+3\sqrt{d})\ej
\end{equation}
in light of \eqref{joining geo hyp}, \eqref{insieme di indici}, and the fact that $\ej/\dej\to0$; and therefore $C^{i,h}_{j,l}+\ej\xi\subset \Omega$ for every $\xi\in B(T), i\in Z_j(\Omega), h\in\{1,\dots,S_j\},$ and $l\in\{0,\dots,k-1\}$. 

For the sake of convenience, we introduce the sets
\begin{equation}\label{Ejlih}
    E^{i, h}_{j,l}:=Q\Bigl(i\delta_{j},4\Bigl\lfloor\frac{\rj}{\varepsilon_{j}}2^{Nh-l-1}\Bigr\rfloor\varepsilon_{j}\Bigr)\setminus   \Q\Bigl(i\delta_{j},\frac{1}{2}\Bigl\lfloor\frac{\rj}{\varepsilon_{j}}2^{Nh-l-1}\Bigr\rfloor\varepsilon_{j}\Bigr),
\end{equation}
and we note that, in light of \eqref{hp: e/r}, it holds that
\begin{equation}\label{eq: per usare lemma long range}
    \frac{\mu_{\ej}(E^{i,h}_{j,l})}{\mu_{\ej}(C^{i,h}_{j,l})}\leq 2^{d+1}
\end{equation}
and that
\begin{align}\notag
& C^{i, h}_{j,l}+Q((T+3\sqrt{d})\ej)\\ \notag
& = Q\Bigl(i\delta_{j},\Bigl\lfloor\frac{\rj}{\varepsilon_{j}}2^{Nh-l}\Bigr\rfloor\varepsilon_{j}+(T+3\sqrt{d})\ej\Bigr)\setminus   \Q\Bigl(i\delta_{j},\Bigl\lfloor\frac{\rj}{\varepsilon_{j}}2^{Nh-l-1}\Bigr\rfloor\varepsilon_{j}-(T+3\sqrt{d})\ej\Bigr)\\ \label{eq: per usare lemma long range 2}
&\subseteq E^{i, h}_{j,l}\subset \Omega
\end{align} 
for every $j$ large enough and for every $i,h,l$. 

To estimate the first term in \eqref{joining 1}, we apply the rescaled version of Poincaré-Wirtinger inequality in Proposition \ref{prop: poincare-wirtinger}, with
\begin{equation*}
\begin{split}
    \sigma &=\ej,  \qquad \tau =\Bigl\lfloor\frac{\rj}{\varepsilon_{j}}2^{Nh-l-1}\Bigr\rfloor\ej, \qquad x_0=i\dej, \\A=\frac{1}{\Bigl\lfloor\frac{\rj}{\varepsilon_{j}}2^{Nh-l-1}\Bigr\rfloor\ej}(E^{i,h}_{j,l}-i\dej)&=Q(4)\setminus \Q(1/2), \qquad A' = \frac{1}{\Bigl\lfloor\frac{\rj}{\varepsilon_{j}}2^{Nh-l-1}\Bigr\rfloor\ej}(C^{i,h}_{j,l}-i\dej).
\end{split}
\end{equation*}
Note indeed that, given $\s_0$ as in Proposition \ref{prop: poincare-wirtinger}, for $j$ large enough we have $\ej<\sigma_0\bigl\lfloor\frac{\rj}{\varepsilon_{j}}2^{Nh-l-1}\bigr\rfloor\ej$ for every $h,l$ by \eqref{hp: e/r}. Upon choosing $j$ large enough, it also holds that $\rj2^{Nh-l-1}\leq 2\rho^{h}_{j,l}$ for every $h,l$; then, combining this observation with \eqref{eq: riferimento lower} and \eqref{eq: per usare lemma long range}, we infer that
\begin{align}\notag
        \int_{ C^{i, h}_{j,l}}|u_{j,l}^{i,h}-u_j(x)|^d\,d\mu_{\ej}(x) & \leq  \int_{ E^{i, h}_{j,l}}|u_{j,l}^{i,h}-u_j(x)|^d\,d\mu_{\ej}(x) \\ \notag
        & \leq C \frac{\mu_{\ej}(E^{i,h}_{j,l})}{\mu_{\ej}(C^{i,h}_{j,l})}\Bigl\lfloor\frac{\rj}{\varepsilon_{j}}2^{Nh-l-1}\Bigr\rfloor^dG_{\ej}(u_j, E^{i, h}_{j,l}) \\ \label{joining poincare bis}
        & \leq C \Bigl(\frac{\rho^h_{j,l}}{\ej}\Bigr)^dG_{\ej}(u_j, E^{i, h}_{j,l})\leq C\Bigl(\frac{\rho^h_{j,l}}{\ej}\Bigr)^d\F^T_{\ej}(u_j,E^{i, h}_{j,l}),
        \end{align}
where $C$ depends on the Poincaré-Wirtinger constant of $Q(4)\setminus \Q(1/2)$ and is independent of all the involved indexes.

Now, we estimate the second term in \eqref{joining 1}. By \eqref{distanza per long-range}, we are in position to apply Lemma \ref{lemma: controllo long range on frames} with 
\begin{equation*}
    \sigma =\ej,  \qquad A=C^{i,h}_{j,l},
\end{equation*}
and this, combined with (G1), \eqref{eq: riferimento lower}, and \eqref{eq: per usare lemma long range 2}, yields
\begin{align}\notag
  & \int_{ B_T}M(\xi)\int_{C^{i, h}_{j,l}}|u_{j}(x+\ej\xi)-u_{j}(x)|^d\, d\mu_{\ej}(x)\, d\mu_1(\xi) \\ \notag
   &\leq C\Bigl(\int_{\rr^d}M(\xi)(1+|\xi|^{d})\, d\mu_1(\xi)\Bigr)G_{\ej}(u_j, E^{i, h}_{j,l})\\ \label{joining 1.5}
   &\leq C\F^T_{\ej}(u_j,E^{i, h}_{j,l}),
\end{align}
where, also in this case, the constant $C$ does not depend on the involved indexes.

We substitute \eqref{joining poincare bis} and \eqref{joining 1.5} in \eqref{joining 1} to obtain
        \begin{equation*}
     \F^T_{\ej}(w_{j,l}^{i,h},C^{i, h}_{j,l}) \leq  C\F^T_{\ej}(u_j,E^{i, h}_{j,l}),
     \end{equation*}
and we deduce
     \begin{equation}\label{joining 1.5bis}
          |\F^T_{\ej}(u_j,C_{j,l}^{i,h})-\F^T_{\ej}(w_{j,l}^{i,h},C_{j,l}^{i,h})|\leq C \F^T_{\ej}(u_j,E_{j,l}^{i,h}).
     \end{equation}
Since the sets $C_{j,l}^{i,h}$ are pairwise disjoint, by construction it follows that the sets $E^{i,h}_{j,l}$ mutually intersect at most four at a time when $i,h$ are fixed and $l$ ranges in $\{0,\dots,k-1\}$, and  
     \begin{equation*}
      \bigcup_{l=0}^{k-1}E^{i,h}_{j,l}\subseteq Q(i\dej, \rj2^{Nh+1})\setminus \Q(i\dej, \rj2^{Nh-k-1}-\ej/2)    =: E^{i,h}_j.
     \end{equation*}
     Hence, summing over $l\in\{0,\dots,k-1\}$ in \eqref{joining 1.5bis}, we obtain
     \begin{equation*}
         \sum_{l=0}^{k-1} |\F^T_{\ej}(u_j,C_{j,l}^{i,h})-\F^T_{\ej}(w_{j,l}^{i,h},C_{j,l}^{i,h})| \leq C\F^T_{\ej}(u_j,E_j^{i,h}),
     \end{equation*}
and therefore, there exists $k_{i,h}\in\{0,\dots,k-1\}$ such that
\begin{equation}\label{joining 2}
     |\F^T_{\ej}(u_j,C_{j,k_{i,h}}^{i,h})-\F^T_{\ej}(w_{j,k_{i,h}}^{i,h},C_{j,k_{i,h}}^{i,h})| \leq \frac{C}{k}\F^T_{\ej}(u_j,E_j^{i,h}).
\end{equation}

We obtain that conditions \eqref{outcornici}, \eqref{bordointerno}, and \eqref{diffenergia} are satisfied choosing
\begin{equation*}
    C^{i,h}_j:=C^{i,h}_{j,k_{i,h}}, \qquad u_j^{i,h}:=u_{j,k_{i,h}}^{i,h}, \qquad \rho^{i,h}_j:=\rho^{h}_{j, k_{i,h}},
\end{equation*}
     and
     \begin{equation*}
         w_j(x):= \begin{cases}
             w^{i,h}_{j, k_{i,h}}(x) &  \text{ for } \mu_{\ej}\text{-a.e. } x\in C^{i,h}_{j},\,  i\in Z_j(\Omega), h\in\{1,\dots,S_j\}, \\
             u_j(x) & \text{ otherwise}.
         \end{cases}
     \end{equation*}
     Indeed, \eqref{outcornici} is satisfied by definition and, since $\chi_{j, k_{i,h}}^{i,h}=1$ on $\Q(i\dej,\rho^{i,h}_{j}+\ej T)\setminus Q(i\dej,\rho^{i,h}_{j}-\ej T)$, \eqref{bordointerno} follows. Since $k\leq N-3$ by assumption, we have that
     \begin{equation*}
         \rj2^{Nh-k-1}-\ej/2>\rj 2^{N(h-1)+1}
     \end{equation*}
     for every $h$ provided that $\ej/\rj<1$; hence, for $j$ large and for fixed $i\in Z_j(\Omega)$, the sets $E_j^{i,h}, h\in\{1,\dots,S_j\},$ are pairwise disjoint. Moreover, by assumption \eqref{joining geo hyp}, we have that $E^{i,h}_j\subseteq Q(i\dej,\dej)$ for every $j,i,h$, and therefore,
     the sets $E_j^{i,h}$ are pairwise disjoint also when $i$ ranges in $Z_j(\Omega)$. Resorting to \eqref{outcornici} and \eqref{joining 2}, in virtue of the assumption \eqref{joining equibdd hyp}, we obtain
     \begin{align*}
          |\F^T_{\ej}(u_j)-\F^T_{\ej}(w_j)|& \leq \sum_{i\in Z_j(\Omega)}\sum_{h=1}^{S_j}  |\F^T_{\ej}(u_j,C_{j}^{i,h})-\F^T_{\ej}(w_{j},C_{j}^{i,h})| \\
         &  \leq \frac{C}{k}\sum_{i\in Z_j(\Omega)}\sum_{h=1}^{S_j}\F^T_{\ej}(u_j,E_j^{i,h}), \\
         & \leq \frac{C}{k} \F^T_{\ej}(u_j) \leq \frac{C}{k},
     \end{align*}
 which proves \eqref{diffenergia}.

Finally, we prove that $w_j\to u$ in $L^d(\Omega;\rr^m)$. Since $u_j\to u$, we prove that $u_j-w_j\to 0$. By \eqref{outcornici} and the definition of $w_j$, we have
\begin{align}\notag
 \lim_{j\to+\infty} \int_\Omega |u_j-w_j|^d\,dx & \leq \lim_{j\to+\infty} \sum_{i\in Z_j(\Omega)} \sum_{h=1}^{S_j} \int_{C_j^{i,h}} |u_j-u_j^{i,h}|^d\,dx \\ \label{joining 3}
  & = \lim_{j\to+\infty} \sum_{i\in Z_j(\Omega)} \sum_{h=1}^{S_j} \ej^d\int_{C_j^{i,h}} |u_j-u^{i,h}_j|^d\, d\mu_{\ej},
\end{align}
where, in the discrete case, the last equality follows identifying the function $u_j$ with a piecewise constant function and using that $\rj/\ej\to+\infty$.
Using \eqref{joining geo hyp} and \eqref{joining poincare bis}, we obtain
\begin{equation*}
     \ej^d\int_{C_j^{i,h}} |u_j-u_j^{i,h}|^d\,d \mu_{\ej}\leq C(\rho^{h}_{j,k_{i,h}})^d \F^T_{\ej}(u_j, \widetilde{E}_j^{i,h})  \leq C \dej^d \F^T_{\ej}(u_j, \widetilde{E}_j^{i,h}),
\end{equation*}
and then, summing over $i\in Z_j(\Omega)$ and $h\in\{1,\dots,S_j\}$ and substituting in \eqref{joining 3}, we infer
\begin{equation*}
     \lim_{j\to+\infty} \int_\Omega |u_j-w_j|^d\,dx \leq C \lim_{j\to+\infty}  \dej^d \F^T_{\ej}(u_j)=0
\end{equation*}
by \eqref{joining equibdd hyp}. This concludes the proof.    
\end{proof}

The following lemma allows us to obtain our {\it strange term} as the sum of the contributions to the total energy occurring near the perforations.

\begin{lemma}\label{lemma: integrale} Let $u_j\in \mathcal{A}_{\varepsilon_j}(\Omega;\mathbb{R}^{m}),j\in\NN,$ be a sequence such that $u_j\rightarrow u$ in $L^{d}(\Omega;\mathbb{R}^{m})$ as $j\to+\infty$ with $u \in W^{1,d}(\Omega;\mathbb{R}^{m})$, and let $T,k,N,\{S_j\}_j$, and $\{u_j^{i,S_j}\}_j$ be as in Lemma \ref{lemma: joining} for every $j$ large enough and for every $i\in Z_j(\Omega)$. Assume that
\begin{equation}\label{stima S_j}
   \rj2^{NS_j}<  \frac{\dej}{2}\leq\rj2^{N(S_j+1)}
\end{equation}
for every $j\in\NN$, that
\begin{equation}\label{hp: e/r bis}
    \lim_{j\to+\infty}\frac{\ej}{\rj}=0,
\end{equation}
and 
\begin{equation}
\label{bd_assumption}
\sup_{j}(\F^T_{\ej}(u_j)+\|u_j\|_{L^{\infty}(\Omega;\rr^m)})<+\infty.
\end{equation}
Then, letting $\f^T$ be defined by \eqref{limite_in_L}, it holds that 
    \begin{equation*}
    \lim_{j\to+\infty} \sum_{i\in Z_j(\Omega)} \dej^d \f^T(u_j^{i,S_j}) = \int_\Omega \f^T(u)\, dx.
\end{equation*}
\end{lemma}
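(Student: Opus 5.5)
The plan is to compare the Riemann-type sum $\sum_{i}\dej^d\f^T(u_j^{i,S_j})$ with $\sum_i\dej^d\f^T(\bar u_j^i)$, where $\bar u_j^i$ denotes the $\mu_{\ej}$-average of $u_j$ over the whole cell $Q(i\dej,\dej)$. Then I would identify the latter with $\int_\Omega \f^T(u_j^\delta)\,dx$, where $u_j^\delta$ is the piecewise-constant function equal to $\bar u_j^i$ on $Q(i\dej,\dej)$ for $i\in Z_j(\Omega)$ and $0$ elsewhere. Since $u_j\to u$ in $L^d$, the averages $u_j^\delta\to u$ in $L^d(\Omega;\rr^m)$: averaging over cells of size $\dej\to0$ converges for the fixed limit $u$, and averaging is an $L^d$-contraction, so $\|u_j^\delta-(u)^\delta_j\|\le\|u_j-u\|$. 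The cells not indexed by $Z_j(\Omega)$ lie within distance $2\sqrt d\,\dej$ of $\partial\Omega$, so their measure is negligible because $\Omega$ is Lipschitz. The functions $\f^T$ are locally Lipschitz by \eqref{equilip2}, and everything is bounded in $L^\infty$ by \eqref{bd_assumption}. Hence $\f^T(u_j^\delta)\to\f^T(u)$ in $L^1$, which gives $\int_\Omega\f^T(u_j^\delta)\,dx\to\int_\Omega\f^T(u)\,dx$.

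The core step is the estimate
\begin{equation*}
\sum_{i\in Z_j(\Omega)}\dej^d|\f^T(u_j^{i,S_j})-\f^T(\bar u_j^i)|\le C\sum_{i}\dej^d|u_j^{i,S_j}-\bar u_j^i|,
\end{equation*}
where the constant comes from \eqref{equilip2} and the uniform $L^\infty$ bound. The right-hand side is at most $C\bigl(\sum_i\dej^d|u_j^{i,S_j}-\bar u_j^i|^d\bigr)^{1/d}|\Omega|^{(d-1)/d}$ by H\"older's inequality. For each $i$, I would bound $|u_j^{i,S_j}-\bar u_j^i|^d$ by the mean of $|u_j-\bar u_j^i|^d$ over $C_j^{i,S_j}$. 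By \eqref{stima S_j}, the frame $C_j^{i,S_j}$ has side comparable to $\dej$ (between roughly $\dej 2^{-N-k-1}$ and $\dej/2$), so its $\mu_{\ej}$-measure is at least $c_{N,k}\,\mu_{\ej}(Q(i\dej,\dej))$. Therefore
\begin{equation*}
|u_j^{i,S_j}-\bar u_j^i|^d\le \frac{C_{N,k}}{\mu_{\ej}(Q(i\dej,\dej))}\int_{Q(i\dej,\dej)}|u_j-\bar u_j^i|^d\,d\mu_{\ej}.
\end{equation*}
Next I apply Proposition \ref{prop: poincare-wirtinger} with $A=A'=Q(1)$, $\tau=\dej$ and $\s=\ej$; this is admissible since $\ej/\dej\to0$. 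It gives an integral bound of $C(\dej/\ej)^dG_{\ej}(u_j,Q(i\dej,\dej))$. Dividing by $\mu_{\ej}(Q(i\dej,\dej))\sim(\dej/\ej)^d$ and multiplying by $\dej^d$ yields
\begin{equation*}
\dej^d|u_j^{i,S_j}-\bar u_j^i|^d\le C_{N,k}\,\dej^dG_{\ej}(u_j,Q(i\dej,\dej)).
\end{equation*}
Summing over the disjoint cells and using \eqref{eq: riferimento lower} bounds the total by $C\dej^d\F^T_{\ej}(u_j)\to0$. This vanishes because of \eqref{bd_assumption}, and $N,k$ are fixed.

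The main obstacle I expect is the bookkeeping that makes this rigorous. In the discrete case I need $\mu_{\ej}(C_j^{i,S_j})\ge c\,\mu_{\ej}(Q(i\dej,\dej))$, which requires that the floors in \eqref{def_cornici} do not spoil the comparability; this follows from \eqref{hp: e/r bis}. I also need the cubes $Q(i\dej,\dej)$ to lie in $\Omega$, which holds by the definition \eqref{insieme di indici} of $Z_j(\Omega)$, so the Poincar\'e--Wirtinger inequality and the restriction of $G_{\ej}$ to these cubes are legitimate. Finally, the interpolated function $u_j$ has to be identified with its piecewise-constant version when passing from $\mu_{\ej}$-integrals to Lebesgue integrals. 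The remaining steps are routine.
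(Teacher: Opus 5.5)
Your argument is correct. Its core estimate matches the paper's treatment of the term $A^1_j$: Poincaré--Wirtinger at scale $\dej$ on $Q(i\dej,\dej)$, the fact from \eqref{stima S_j} that the frame $C_j^{i,S_j}$ fills a fraction of that cell depending only on $N,k$, then Hölder and superadditivity of $G_{\ej}$. Both arguments get a total error of order $\dej\bigl(\F^T_{\ej}(u_j)\bigr)^{1/d}$.

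The difference is the intermediate object used in the triangle inequality. The paper inserts $\f^T(u_j(x))$ rather than cell averages, so the error splits as
$C\sum_i\int_{Q(i\dej,\dej)}|u_j^{i,S_j}-u_j|\,dx+C\|u_j-u\|_{L^1(\Omega;\rr^m)}+\int_{\Omega\setminus\bigcup_i Q(i\dej,\dej)}\f^T(u)\,dx$.
It treats the first term by applying Proposition \ref{prop: poincare-wirtinger} directly with $A'$ equal to the rescaled frame; that version of the inequality already contains your Jensen step comparing frame average and cell average. The second term is then immediate, and the paper never has to show that piecewise-constant cell averages converge to $u$.

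Your route keeps the core estimate at the level of numbers, namely $\mu_{\ej}$-averages. It therefore avoids passing from the Lebesgue integral $\int_{Q(i\dej,\dej)}|u_j^{i,S_j}-u_j|\,dx$ to $\ej^d\int\cdot\,d\mu_{\ej}$, which the paper justifies only through the piecewise-constant identification. Your identity $\int_\Omega\f^T(u_j^\delta)\,dx=\sum_i\dej^d\f^T(\bar u_j^i)$ is exact, using $\f^T(0)=0$. The price is the extra step $u_j^\delta\to u$ in $L^d$.

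On that step, in the discrete case $\bar u_j^i$ is the Lebesgue average of the interpolation over the union of lattice cells $\alpha+[0,\ej)^d$ with $\alpha\in Q(i\dej,\dej)\cap\ej\ZZ^d$. That union is not exactly $Q(i\dej,\dej)$. So your averaging operator is uniformly bounded on $L^d$, with norm tending to $1$, but not a strict contraction. Also, $(u)_j^\delta$ has to be defined through the same Lebesgue averages, since $u$ has no lattice values. Both points are harmless.
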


\begin{proof}
   To simplify the notation we set $Q^i_j:=Q(i\dej,\delta_j)$. Using \eqref{equilip2} and \eqref{bd_assumption}, the following estimate holds:
    \begin{equation*}
        \begin{split}
           & \Bigl|\sum_{i \in Z_j(\Omega)}\delta_j^d\f^T(u^{i,S_j}_j)-\int_{\Omega}\varphi^T(u)\, dx\Bigr|\\
            &=\Bigl|\sum_{i \in Z_j(\Omega)}\int_{Q^i_j}(\f^T(u^{i,S_j}_j)-\f^T(u))\, dx-\int_{\Omega\setminus\bigcup_{i \in Z_j(\Omega)}Q^i_j}\f^T(u)\, dx\Bigr|\\
            &\leq\sum_{i \in Z_j(\Omega)}\int_{Q_j^i}|\f^T(u^{i,S_j}_j)-\f^T(u_j)|\, dx + \sum_{i \in Z_j(\Omega)}\int_{Q_j^i}| \f^T(u_j)-\f^T(u)|\, dx \\
            &\quad + \int_{\Omega\setminus\bigcup_{i \in Z_j(\Omega)}Q_j^i}\varphi^T(u)\, dx \\
            &\leq C\sum_{i \in Z_j(\Omega)}\int_{Q_j^i}|u^{i,S_j}_j-u_j|\, dx + C \sum_{i \in Z_j(\Omega)}\int_{Q_j^i}|u_j-u|\, dx \\
            &\quad + \int_{\Omega\setminus\bigcup_{i \in Z_j(\Omega)}Q_j^i}\varphi^T(u)\, dx  =: A^1_j+A^2_j+A^3_j.
        \end{split}
    \end{equation*}
We have 
    \begin{equation*}
        A_j^2\leq C\|u_j-u\|_{L^1(\Omega; \rr^m)},
    \end{equation*}
    which tends to $0$ as $j \to +\infty$ by assumption. Moreover, using \eqref{bd_assumption} and the fact that 
    \begin{equation*}
        \Bigl|\Omega\setminus \bigcup_{i \in Z_j(\Omega)} Q^i_j\Bigr|\to 0
    \end{equation*}
    as $j \to +\infty$, we infer that $A^3_j\to 0$ as well. 
    
    To establish the claim, it remains to prove that $A^1_j\to 0$ as $j \to +\infty$. Arguing as in the last part of the proof of Lemma \ref{lemma: joining}, we have \begin{equation*}
      \lim_{j\to+\infty} \sum_{i \in Z_j(\Omega)}\int_{Q_j^i}|u^{i,S_j}_j-u_j|\, dx =   \lim_{j\to+\infty}\sum_{i \in Z_j(\Omega)}\ej^d\int_{Q_j^i}|u^{i,S_j}_j-u_j|\, d\mu_{\ej}.
    \end{equation*}
    By H\"older's inequality we get
    \begin{align} \notag
            \sum_{i \in Z_j(\Omega)} \ej^d\int_{Q_j^i}|u^{i,S_j}_j-u_j|\, d\mu_{\ej} & \leq \varepsilon_j^d \sum_{i \in Z_j(\Omega)}(\mu_{\ej}(Q^i_j))^{1-\frac{1}{d}}\Bigl(\int_{Q_j^i}|u^{i,S_j}_j-u_j|^d\, d\mu_{\ej}\Bigr)^{\frac{1}{d}}\\ \notag
            & \leq \varepsilon_j^d\Bigl(\frac{\delta_j}{\varepsilon_j}\Bigr)^{d-1} \sum_{i \in Z_j(\Omega)} \Bigl(\int_{Q_j^i}|u^{i,S_j}_j-u_j|^d\, d\mu_{\ej}\Bigr)^{\frac{1}{d}}\\ \label{integrale 0}
            &=\delta_j^{d-1}\sum_{i \in Z_j(\Omega)} \Bigl(\ej^d\int_{Q^i_j}|u^{i,S_j}_j-u_j|^d\, d\mu_{\ej}\Bigr)^{\frac{1}{d}}.
    \end{align}
By the first inequality in \eqref{stima S_j} we have $C^{i,S_j}_j\subset Q_j^i$, where $C^{i,S_j}_j$ is as in \eqref{def_cornici}; then, recalling that
\begin{equation*}
    u_{j}^{i, S_j}= \frac{1}{\mu_{\ej}(C^{i,S_j}_j)}\int_{C^{i,S_j}_j} u_{j}\, d\mu_{\ej},
\end{equation*}
we apply the rescaled version of Poincaré-Wirtinger inequality in Proposition \ref{prop: poincare-wirtinger} with
\begin{equation*}
   \sigma=\ej, \qquad  \tau=\dej, \qquad x_0=i\dej, \qquad A=\frac{1}{\dej}(Q^i_j-i\dej)=Q(1),  \qquad A'=\frac{1}{\dej}(C^{i,S_j}_j-i\dej).
\end{equation*}
Note indeed that, given $\s_0$ as in Proposition \ref{prop: poincare-wirtinger}, we have $\ej<\sigma_0\dej$ for $j$ large enough since $\ej/\dej\to0$ as $j\to+\infty$ in light of \eqref{hp: e/r bis}. We obtain that
    \begin{align*}
      \ej^d\int_{Q^i_j}|u^{i,S_j}_j-u_j|^d\, d\mu_{\ej} & \leq C\delta_j^d \frac{\mu_{\ej}(Q^i_j)}{\mu_{\ej}(C^{i,S_j}_j)} G_{\varepsilon_j}(u_j,Q^i_j) \\
      & \leq C \dej^d \frac{\dej^d}{(2^d-1)(r_j 2^{NS_j-k_{i,S_j}-1})^d} G_{\varepsilon_j}(u_j,Q^i_j)\\
     & \leq C 2^{(2N-2)d}\dej^d G_{\varepsilon_j}(u_j,Q^i_j),
    \end{align*}
    where $C$ depends on $d$ and on the Poincaré-Wirtinger constant of $Q(1)$ but it is independent of $j$ and $i$, and we used that
    \begin{equation*}
        r_j 2^{NS_j-k_{i,S_j}-1}\geq \dej 2^{-2N+2}
    \end{equation*}
    in light of the second inequality in \eqref{stima S_j} and the fact that $0\leq k_{i,S_j}\leq N-4$. Hence, we obtain
    \begin{equation*}
        \ej^d\int_{Q^i_j}|u^{i,S_j}_j-u_j|^d\, d\mu_{\ej}\leq C\delta_j^d G_{\varepsilon_j}(u_j,Q^i_j),
    \end{equation*}
    with the constant $C$ independent of $j$. Substituting this inequality in \eqref{integrale 0} and using the concavity of the function $x \mapsto x^{\frac{1}{d}}$, \eqref{eq: riferimento lower} and \eqref{bd_assumption}, we conclude that
    \begin{equation*}
        \begin{split}
              \sum_{i \in Z_j(\Omega)} \ej^d\int_{Q_j^i}|u^{i,S_j}_j-u_j|\, d\mu_{\ej} &\leq C\delta_j^{d-1}\sum_{i \in Z_j(\Omega)}\delta_j(G_{\varepsilon_j}(u_j,Q^i_j))^{\frac{1}{d}} \\
            &            \leq C\delta^d_j(\#Z_j(\Omega))^{1-1/d}\Bigl(\sum_{i \in Z_j(\Omega)}G_{\varepsilon_j}(u_j,Q^i_j)\Bigr)^{\frac{1}{d}}\\
            &\leq C\delta_j(G_{\varepsilon_j}(u_j,\Omega))^{\frac{1}{d}}\\
            &\leq C\delta_{j}(\mathcal{F}^{T}_{\varepsilon_{j}}(u_{j}))^{\frac{1}{d}} \leq C\delta_{j},
            \end{split}
            \end{equation*}
            and we conclude that $A_j^1\to 0$ as $\delta_j\rightarrow 0$.
    \end{proof}

Finally, we recall a lemma that asserts that the asymptotic analysis can be reduced to sequences of functions that are equi-bounded in $L^\infty$, see \cite[Lemma 6.2]{AGL}  in the continuous case and \cite[Lemma 7.2]{Sig-dis} in the discrete case.
 
    \begin{lemma}\label{lemma : siga}
    Let $T>2$ and $u_j \in \mathcal{A}_{\varepsilon_j}(\Omega;\mathbb{R}^{m}),j\in\NN,$ be a sequence such that 
    \begin{equation*}
        \sup_{j}(\mathcal{F}^T_{\varepsilon}(u_{j})+\|u_{j}\|_{L^d(\Omega;\mathbb{R}^{m})})<+\infty.
    \end{equation*} 
    Then, for every $\eta >0$ and $L \in \mathbb{N}$, there exist a subsequence (not relabeled), a constant $R_{L}>L$ and a $1$-Lipschitz function $t_{L}:\mathbb{R}^{m}\rightarrow \mathbb{R}^{m}$ such that $t_{L}(z)=z$ if $|z|<R_L$,  $t_{L}(z)=0$ if $|z|>2R_{L}$,
    and
    \begin{equation*}
        \liminf_{j\to+\infty}\mathcal{F}^T_{\varepsilon_{j}}(t_{L}(u_{j}))\leq \liminf_{j\to+\infty}\mathcal{F}^T_{\varepsilon_{j}}(u_{j})+\eta.
    \end{equation*}
\end{lemma}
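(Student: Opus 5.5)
The plan is the standard truncation argument at a level $R_L$ chosen by averaging over dyadic levels, which is how \cite[Lemma 6.2]{AGL} and \cite[Lemma 7.2]{Sig-dis} proceed. Fix $\eta>0$ and $L\in\NN$, and pass to a subsequence realizing the $\liminf$ of $\F^T_{\ej}(u_j)$. For a threshold $R>0$ let $t_R:\rr^m\to\rr^m$ be the $1$-Lipschitz map equal to the identity on $\{|z|<R\}$, vanishing on $\{|z|>2R\}$, and interpolating radially in between. One can take $t_R(z)=\max\{0,\min\{1,(2R-|z|)/|z|\}\}\,z$; its Lipschitz constant is at most $1$ after checking the radial and tangential derivatives, and otherwise one uses a slightly modified profile on $R\le|z|\le 2R$. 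Composition with a $1$-Lipschitz map does not increase any difference, so $|t_R(u_j)(x+\ej\xi)-t_R(u_j)(x)|\le|u_j(x+\ej\xi)-u_j(x)|$.

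The naive pointwise comparison $f^T(\xi,t_R(a)-t_R(b))\le f^T(\xi,a-b)$ fails, because $f$ is only $d$-homogeneous and not monotone in $|z|$. I would therefore split the pairs $(x,x+\ej\xi)$, with $|\xi|\le T$, into three classes.

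\textbf{Both values below $R$.} Then $t_R$ acts as the identity and the energy is unchanged.

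\textbf{Both values above $2R$.} The contribution is zero.

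\textbf{Remaining pairs.} At least one of $|u_j(x)|,|u_j(x+\ej\xi)|$ lies in $[R,2R]$, or the pair jumps across that annulus. For these pairs I bound the truncated energy by $M(\xi)|u_j(x+\ej\xi)-u_j(x)|^d$. Restricting (G1) and Lemma \ref{lemma: controllo long range on frames} to the set where $u_j$ takes values in a neighbourhood of the annulus, this is controlled by $C\,G_{\ej}$ localized there. The jumping pairs are genuinely nonlocal and need separate care. I would treat them by taking dyadic levels $R_k=2^kR_0$ and observing that a pair with $|u_j(x)|<R_k$ and $|u_j(x+\ej\xi)|>2R_k$ is counted for only boundedly many consecutive $k$, unless the jump spans many levels. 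In that case $|u_j(x+\ej\xi)-u_j(x)|$ is comparable to the larger value and the pair is still charged to at most $O(1)$ levels per dyadic scale of $|\Delta u_j|$.

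\textbf{Averaging step.} Summing over $k=1,\dots,K$, the bad contributions are bounded by $C\,\F^T_{\ej}(u_j)+C\int_{\{|u_j|\ge R_0\}}$-type terms, uniformly in $j$. Hence some $k\in\{1,\dots,K\}$ has bad contribution at most $C\sup_j\F^T_{\ej}(u_j)/K\le\eta$ once $K$ is large. This choice of $k$ depends on $j$. Since only finitely many levels are involved, a further subsequence freezes a single $k$ for all $j$, and I set $R_L:=R_k$ with $R_0>L$. This gives
\[
\liminf_j\F^T_{\ej}(t_{R_L}(u_j))\le\liminf_j\F^T_{\ej}(u_j)+\eta .
\]
The $L^d$ bound is used only to keep the sets $\{|u_j|>R_0\}$ of small measure, which is convenient but not essential for the energy estimate.

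\textbf{Main obstacle.} I expect the hardest step to be the bookkeeping of the long jumps across several levels in the discrete and continuous nonlocal setting. The fallback is to first reduce to nearest-neighbour or $B(r_0)$ interactions via Lemma \ref{lemma: controllo long range on frames}, and to count level crossings along discrete paths of nearest neighbours, where each unit step crosses levels in a controlled way.
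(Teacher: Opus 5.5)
Your overall plan (dyadic levels $R_k=2^kR_0$ with $R_0>L$, a pigeonhole choice of a good level for each $j$, then freezing it along a subsequence) is the standard argument behind \cite[Lemma 6.2]{AGL} and \cite[Lemma 7.2]{Sig-dis}, which the paper cites instead of proving the lemma. However, the step that makes the averaging work, namely the treatment of pairs jumping across the annulus, does not close as you wrote it.

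If every bad pair at level $k$ is charged $M(\xi)|u_j(x+\varepsilon_j\xi)-u_j(x)|^d$, then a pair with moduli $s\ll S$ is charged its full energy at roughly $\log_2(S/\max\{s,R_0\})$ levels. Your ``$O(1)$ levels per dyadic scale of $|\Delta u_j|$'' is exactly this logarithmic count, and the energy does not control it. A concrete example in the continuous case: let $u_j=S_jz_0$ on $B(x_0,\varepsilon_j/S_j)\subset\Omega$, with $|z_0|=1$ and $S_j\to+\infty$, and $u_j=0$ elsewhere. The energy stays of order one and is carried entirely by pairs jumping from $0$ to $S_jz_0$. With your bound, each of the $K$ levels receives a charge of order one, so the pigeonhole gives nothing, even though $t_{R_k}(u_j)\equiv0$.

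Three further points in your proposal do not rescue this:
\begin{itemize}
\item The ``$\int_{\{|u_j|\ge R_0\}}$-type terms'' cannot be absorbed through the $L^d$ bound, since in this scaling $\int|u_j|^d\,d\mu_{\varepsilon_j}=\varepsilon_j^{-d}\|u_j\|^d_{L^d(\Omega;\mathbb{R}^m)}$.
\item Lemma \ref{lemma: controllo long range on frames} localizes in physical space, not in the range of $u_j$. It gives no bound by $G_{\varepsilon_j}$ ``restricted to where $u_j$ is near the annulus''.
\item Your fallback via $B(r_0)$-interactions does not help, because in the example above such pairs jump in exactly the same way.
\end{itemize}

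The missing observation is that $|t_R|\le R$, hence $|t_R(a)-t_R(b)|\le\min\{|a-b|,2R\}$, and this should be used pair by pair for $|\xi|\le T$. Let $B_k$ be the contribution of the bad pairs to $\mathcal{F}^T_{\varepsilon_j}(t_{R_k}(u_j))$, so that $\mathcal{F}^T_{\varepsilon_j}(t_{R_k}(u_j))\le\mathcal{F}^T_{\varepsilon_j}(u_j)+B_k$. Fix a pair with $D=|a-b|$.
\begin{itemize}
\item The levels with $R_k<D$ contribute at most $M(\xi)\sum_{R_k<D}(2R_k)^d\le CM(\xi)D^d$, by the geometric series.
\item A level with $R_k\ge D$ at which the pair is bad forces $\max\{|a|,|b|\}\in[R_k,3R_k]$. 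This happens for at most two values of $k$, each charged at most $M(\xi)D^d$.
\end{itemize}
Therefore
\begin{equation*}
\sum_{k=1}^K B_k\le C\int_{B(T)}M(\xi)\int_{\Omega_{\mu_{\varepsilon_j}}(\xi)}|u_j(x+\varepsilon_j\xi)-u_j(x)|^d\,d\mu_{\varepsilon_j}(x)\,d\mu_1(\xi),
\end{equation*}
with $C$ independent of $K$. Only at this point should you apply the long-range control, once and globally, together with (G1) and (G0), to bound the right-hand side by $C\sup_j\mathcal{F}^T_{\varepsilon_j}(u_j)$. As in the cited references, this needs some care in a boundary layer of $\partial\Omega$, where Lemma \ref{lemma: controllo long range on frames} does not apply directly. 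Choosing $K\ge C\sup_j\mathcal{F}^T_{\varepsilon_j}(u_j)/\eta$ then gives your conclusion, without using the $L^d$ bound at all.
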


\subsection{Proof of Theorem \ref{thm: main discreto}}

For the sake of exposition, we divide the proof into three steps.

\smallskip

{\bf Step 1.} Let $T>2$ be a fixed natural. Let $u\in W^{1,d}(\Omega;\rr^m)$, $u_j\to u$ in $L^d(\Omega; \rr^m)$ and, without loss of generality, suppose that $\sup_j F^T_{\ej}(u_j)<+\infty$. We claim that
\begin{equation}\label{liminf T: claim}
    \liminf_{j\to+\infty} F_{\ej}^T(u_j)\geq \int_\Omega f_{\rm hom}^T(\nabla u)\, dx +\gamma \int_\Omega \f^T(u)\,dx,
\end{equation}
where $f_{\rm hom}^T$ and $\f^T$ are defined by \eqref{f^T_hom} and \eqref{limite_in_L}, respectively.

In a first instance, we further assume that 
\begin{equation}\label{ipotesi: succ limitata}
    \sup_j \|u_j\|_{L^\infty(\Omega;\rr^m)}<+\infty.
\end{equation}
Fix $k\geq2$ natural and $\eta\in(0,1)$. Applying Proposition \ref{proposition: psi limite}$(ii)$ with $L=2^{N+k}$, we have that
\begin{equation*}
    \frac{(\log 2^{N+k})^{d-1} \f^T_{2^{N+k}}}{\f^T}\to 1 \qquad \text{ as }N\to+\infty
\end{equation*}
uniformly on $\rr^m\setminus\{0\}$. Recalling that $\f^T_{2^{N+k}}(0)=\f^T(0)=0$, we deduce that there exists $N^*$ such that
\begin{equation}\label{scelta N liminf}
   (\log 2^N)^{d-1}\f^T_{2^{N+k}}(z)\geq(1-\eta)\f^T(z) \qquad \text{ if } N\geq N^*
\end{equation}
for every $z\in \rr^m$. Then, we fix $N>\max\{N^*,k+3\}$. Similarly, applying Proposition \ref{proposition: psi limite}$(i)$, with $\ell=1$ and $L=2^{N+k}$, we have
\begin{equation*}
    \frac{\f^T_{1,2^{N+k},\s}}{\f^T_{2^{N+k}}}\to1 \qquad \text{ as }\s\to0
\end{equation*}
uniformly on $\rr^m\setminus\{0\}$, and therefore there exists $\s^*<1/T$ such that
\begin{equation}\label{scelta j liminf}
    \f^T_{1,2^{N+k},\s}(z)\geq (1-\eta)\f^T_{2^{N+k}}(z) \qquad \text{ if } \s\leq \s^*
\end{equation}
for every $z\in \rr^m$. Since $k$ is fixed and $\ej/\rj\to0$, we assume that $2^k\ej/\rj<\s^*$ for every $j\in\NN$.

We apply Lemma \ref{lemma: joining} with $T,k,N$ fixed as above and a sequence of naturals $\{S_j\}_j$ such that \begin{equation}\label{scelta S liminf}
    \rj2^{NS_j}< \frac{\dej}{2}\leq\rj2^{N(S_j+1)}
\end{equation}
for every $j\in\NN$. Note that, upon considering $j$ large enough, condition \eqref{scelta S liminf} is compatible with the choice of the parameter $N$ since $\dej/\rj\to+\infty$ as $j\to+\infty$. We obtain functions $w_{j}\in \mathcal{A}_{\varepsilon_{j}}(\Omega;\mathbb{R}^{m}), j\in\NN,$  converging to $u$ in $L^d(\Omega; \rr^m)$ such that, for every $j$ large enough, we have
\begin{equation}\label{liminf-outcornici}
    w_{j}=u_{j} \qquad \mu_{\ej} \text{-a.e. on } \Omega\setminus \Bigl[ \bigcup_{\substack{i \in Z_{j}(\Omega)\\ h\in\{1,\dots, S_j\}}}C_{j}^{i,h}\Bigr],
\end{equation}
\begin{equation}\label{liminf-bordointerno}
    w_{j}=u_{j}^{i, h} \qquad \mu_{\ej} \text{-a.e. on }\overline{Q}(i\delta_j,\rho^{i,h}_{j}+\varepsilon_j T)\setminus Q(i\delta_j,\rho^{i, h}_{j}-\varepsilon_j T),
\end{equation}
\begin{equation}\label{liminf-diffenergia}
    F^T_{\ej}(u_j)\geq \F^T_{\ej}(w_j)- \frac{C}{k}
\end{equation}
for every $i\in Z_j(\Omega)$, $h\in\{1,\dots,S_j\}$, and some $C>0$ independent of $j$. 

We set 
\begin{equation*}
    E_j:= \bigcup_{i\in Z_j(\Omega)} Q(i\dej, \rho_j^{i, S_j})
\end{equation*}
and we first estimate from below the contribution to the energy of the region $\Omega\setminus E_j$; that is, far from the perforations that are strictly contained in $\Omega$.

\smallskip

\underline{{\it Analysis far from the perforations.}} We define $v_{j}\in \mathcal{A}_{\varepsilon_{j}}(\Omega;\mathbb{R}^{m}), j\in\NN,$ as
\begin{equation*}
    v_{j}(x):=
    \begin{cases}
     u^{i, S_j}_{j} & \text{ for } \mu_{\ej}\text{-a.e. } x\in Q(i\delta_{j},\rho^{i, S_j}_{j}), i \in Z_{j}(\Omega),\\
                w_{j}(x) &\text{otherwise},
            \end{cases}
        \end{equation*}
and we claim that $v_j\to u$ in $L^d(\Omega;\rr^m)$. To this end, we note that, by \eqref{liminf-bordointerno} and \eqref{liminf-diffenergia}, it holds \begin{equation*}
    \F^T_{\ej}(v_j)=\F^T_{\ej}(w_j,\Omega\setminus E_j)\leq F^T_{\ej}(u_j)+C,
\end{equation*}
so that $\sup_j \F^T_{\ej}(v_j)<+\infty$. Moreover, by \eqref{ipotesi: succ limitata} and the construction of Lemma \ref{lemma: joining}, we have
\begin{equation*}
    \sup_j\|v_j\|_{L^\infty(\Omega;\rr^m)}<+\infty;
\end{equation*}
and then, by applying Proposition \ref{prop: compactness}, we infer that a (not relabeled) subsequence of $\{v_j\}_j$ converges in $L^d(\Omega;\rr^m)$ to a certain $v$. Let $\chi$ denote the periodic extension of the characteristic function of the cube $Q(2/3)$. In virtue of \eqref{scelta S liminf}, we have
\begin{equation*}
\bigcup_{h=1}^{S_j}C^{i,h}_j\subset Q\Bigl(i\dej,\frac{\dej}{2}\Bigr)
\end{equation*}
for every $i\in Z_j(\Omega)$; hence, recalling \eqref{liminf-outcornici}, we get
\begin{equation}\label{near: identità}
    u_j(x)\Bigl(1-\chi\Bigl(\frac{x}{\dej}\Bigr)\Bigr)=v_j(x)\Bigl(1-\chi\Bigl(\frac{x}{\dej}\Bigr)\Bigr) \qquad \text{ for } \mathcal{L}^d\text{-a.e. } x\in\Omega,
\end{equation}
up to identifying $u_j$ and $v_j$ with piecewise constant functions in the discrete case. Since $\chi(\cdot/\dej)$ weakly$^*$-converges to the constant $(2/3)^{d}$ in $L^\infty(\Omega; \rr^m)$ and since $u_j\to u, v_j\to v$ in $L^d(\Omega; \rr^m)$, we pass to the limit in \eqref{near: identità} to infer that
$u=v$ $\mathcal{L}^d$-a.e. on $\Omega$.

Using now that $v_j\to u$ and resorting to Theorem \ref{theorem: Gamma unconstrained}, we obtain
        \begin{equation}\label{estimate far from the ps} 
\liminf_{j\to+\infty} \F^{T}_{\varepsilon_j}(w_j,\Omega\setminus E_j)=\liminf_{j\to+\infty}\mathcal{F}^{T}_{\varepsilon_{j}}(v_{j}) \geq \int_{\Omega}f^T_{\hom}(\nabla u)\, dx,
\end{equation}
which is the desired lower bound.
\smallskip 

\underline{{\it Analysis near the perforations.}} Now we perform the asymptotic analysis on $E_j$.

Fix $i\in Z_j(\Omega)$ and let \begin{equation*}
    w^{i,h}_j(x):=
    \begin{cases}
    0 & \text{ if }x \in \overline{Q}(\rho_j^{i,h}+\ej T), \\[2pt]
    w_j(x+i\dej)-u_j^{i,h} & \text{ if }  x \in Q(\rho^{i,h+1}_j-\ej T)\setminus \overline{Q}(\rho_j^{i,h}+\ej T), \\[3pt]
    u_j^{i,h+1}-u_j^{i,h} & \text{ if } x\in  Q(\rho^{i,h+1}_j-\ej T)^c,
    \end{cases}
\end{equation*}
for every $h\in\{1,\dots,S_j-1\}$, where the equality is valid for $\mu_{\ej}$-a.e. $x\in\rr^d$. To deal with the Dirichlet boundary condition imposed on the perforation, we let
\begin{equation*}
    w^{i,0}_j(x):=
    \begin{cases}
    0 & \text{ if } x\in \overline{Q}(\rj), \\[2pt]
    w_j(x+i\dej) & \text{ if }  x\in Q(\rho^{i,1}_j-\ej T)\setminus \overline{Q}(\rj), \\[3pt]
    u_j^{i,1} & \text{ if } x\in  Q(\rho^{i,1}_j-\ej T)^c,
    \end{cases}
\end{equation*}
 where the equality is valid for $\mu_{\ej}$-a.e. $x\in\rr^d$. We observe that $\rj2^{-k}+\ej T \leq\rj $ for $j$ large enough; hence, it also holds
\begin{equation*}
    w_j^{i,0}=
    \begin{cases}
    0 & \text{ for } \mu_{\ej}\text{-a.e. } x\in \overline{Q}(\rj2^{-k}+\ej T), \\[3pt]
    u_j^{i,1} & \text{ for } \mu_{\ej}\text{-a.e. }  x\in Q(\rho_j^{i,1}-\ej T)^c.
    \end{cases}
\end{equation*}
We have
\begin{align*} \notag
    \F^T_{\ej}(w_j, Q(i\delta_{j},\rho^{i, S_j}_{j}))& \geq  \F^T_{\ej}(w_j, Q(i\dej,\rho_j^{i,1})\setminus \overline{Q}(i\dej, \rj)) + \sum_{h=1}^{S_j-1}  \F^T_{\ej}(w_j, Q(i\dej,\rho_j^{i,h+1})\setminus \overline{Q}(i\dej, \rho_j^{i,h})) \\ \notag
    & = \F^T_{\ej}(w^{i,0}_j, Q(\rho_j^{i,1})\setminus \overline{Q}(\rj2^{-k})) + \sum_{h=1}^{S_j-1}  \F^T_{\ej}(w^{i,h}_j, Q(\rho_j^{i,h+1})\setminus \overline{Q}(\rho_j^{i,h})) \\ 
    & \geq \f^T_{\rj2^{-k}, \rho_j^{i,1},\ej}(u_j^{i,1}) + \sum_{h=1}^{S_j-1} \f^T_{\rho_j^{i,h}, \rho_j^{i,h+1},\ej}(u_j^{i,h+1}-u_j^{i,h}).
\end{align*} 
For every $j$ large enough, it holds
\begin{equation*}
    \rj 2^{Nh-k}\leq \rho^{i,h}_j\leq \rj2^{Nh}
\end{equation*}
for every $i\in Z_j(\Omega)$ and $h\in\{1,\dots, S_j\}$; hence, using Remark \ref{rmk: funzioni ausiliarie}(vi) we infer
\begin{equation*}
    \F^T_{\ej}(w_j, Q(i\delta_{j},\rho^{i, S_j}_{j}))\geq \sum_{h=0}^{S_j-1} \f^T_{\rj2^{Nh-k}, \rj2^{N(h+1)},\ej}(u_j^{i,h+1}-u_j^{i,h}),
\end{equation*}
where we let $u_j^{i,0}:=0$. Upon setting 
\begin{equation*}
    \sj^h:=\frac{\ej}{\rj}2^{-Nh+k}, \qquad j\in\NN, \ h\in\{0,\dots,S_j-1\},
\end{equation*}
we apply Remark \ref{rmk: funzioni ausiliarie}(v) to get 
\begin{equation*}
    \f^T_{\rj2^{Nh-k}, \rj2^{N(h+1)},\ej}(u_j^{i,h+1}-u_j^{i,h})= \f^T_{1, 2^{N+k},\sj^h}(u_j^{i,h+1}-u_j^{i,h})
\end{equation*}
for every $h\in\{0,\dots, S_j-1\}$. As a consequence, we obtain
\begin{align*}
      \F^T_{\ej}(w_j, Q(i\delta_{j},\rho^{i, S_j}_{j}))& \geq  \sum_{h=0}^{S_j-1} \f^T_{1, 2^{N+k},\sj^h}(u_j^{i,h+1}-u_j^{i,h}).
\end{align*}

We note that $\sj^h\leq 2^k\ej/\rj<\s^*$ for every $j\in\NN$ and $h\in\{0,\dots,S_j-1\}$. Then, using \eqref{scelta N liminf} and \eqref{scelta j liminf}, we get
\begin{align}\notag
 \F^T_{\ej}(w_j, Q(i\delta_{j},\rho^{i, S_j}_{j})) 
 & \geq  (1-\eta)\sum_{h=0}^{S_j-1} \f_{2^{N+k}}^T(u_j^{i,h+1}-u_j^{i,h})
\\ \label{liminf 1}
   & \geq (1-\eta)^2 (\log 2^N)^{1-d}\sum_{h=0}^{S_j-1}\f^T(u_j^{i,h+1}-u_j^{i,h}).
 \end{align}
Now, recalling that $u^{i,0}_j=0$, we observe that
\begin{equation*}
    \sum_{h=0}^{S_j-1} (u_j^{i,h+1}-u_j^{i,h})=u^{i,S_j}_j-u^{i,0}_j=u^{i,S_j}_j,
\end{equation*}
 and therefore, using that the function $\f^T$ is convex and $d$-homogeneous (see Remark \ref{rmk: funzioni ausiliarie}(iii),(iv)), we have
 \begin{equation}\label{liminf 2}
     \sum_{h=0}^{S_j-1}\f^T(u_j^{i,h+1}-u_j^{i,h}) \geq S_j \f^T\Bigl(\frac{u_j^{i,S_j}}{S_j}\Bigr)=S_j^{1-d}\f^T(u_j^{i,S_j}).
 \end{equation}
We substitute \eqref{liminf 2} in \eqref{liminf 1} and recall that $S_j\log 2^N\leq \log(\dej/(2\rj))$  by the first inequality in \eqref{scelta S liminf}. We obtain
 \begin{equation*}
 \F^T_{\ej}(w_j, Q(i\delta_{j},\rho^{i, S_j}_{j})) \geq (1-\eta)^2 (S_j\log 2^N)^{1-d}\f^T(u_j^{i,S_j}) \geq (1-\eta)^2 \Bigl(\log\Bigl(\frac{\dej}{2\rj}\Bigr)\Bigr)^{1-d}\f^T(u_j^{i,S_j}).
\end{equation*}

Summing over $i\in Z_j(\Omega)$ we get
\begin{align*}
\liminf_{j\to+\infty} \F^T_{\ej}(w_j, E_j) & = \liminf_{j\to+\infty} \sum_{i\in Z_j(\Omega)} \F^T_{\ej}(w_j, Q(i\delta_{j},\rho^{i, S_j}_{j})) \\
& \geq (1-\eta)^2\liminf_{j\to+\infty} \sum_{i\in Z_j(\Omega)}\Bigl(\log\Bigl(\frac{\dej}{2\rj}\Bigr)\Bigr)^{1-d}\f^T(u_j^{i,S_j}) \\
& =  (1-\eta)^2\liminf_{j\to+\infty} \sum_{i\in Z_j(\Omega)} \gamma\dej^d\f^T(u_j^{i,S_j}),
\end{align*}
where the last equality follows by \eqref{dej^d}. Since $\{u_j\}_j$ is bounded in $L^\infty(\Omega;\rr^m)$ by \eqref{ipotesi: succ limitata}, we apply Lemma \ref{lemma: integrale} and we infer
\begin{equation}\label{estimate vicino perf liminf}
    \liminf_{j\to+\infty} \F^T_{\ej}(w_j, E_j) \geq (1-\eta)^2\gamma \int_\Omega \f^T(u)\,dx.
\end{equation}

Substituting \eqref{estimate far from the ps} and \eqref{estimate vicino perf liminf} in \eqref{liminf-diffenergia}, we obtain
\begin{equation*}
    \liminf_{j\to+\infty} F_{\ej}^T(u_j)\geq \int_\Omega f^T_{\rm hom}(\nabla u)\, dx + (1-\eta)^2\gamma\int_\Omega \f^T(u)\,dx-\frac{C}{k}
\end{equation*}
and we infer \eqref{liminf T: claim} by letting first $\eta\to0$ and then $k\to+\infty$.

To conclude, we prove the claim removing the assumption \eqref{ipotesi: succ limitata}. Let $u\in W^{1,d}(\Omega;\rr^m)$, $u_j\to u$ in $L^d(\Omega; \rr^m)$ and suppose that $\sup_j \F^T_{\ej}(u_j)<+\infty$. By Lemma \ref{lemma : siga} and the lower bound we just proved for a sequence bounded in $L^\infty(\Omega;\rr^m)$, up to extracting a not relabeled subsequence we have
\begin{align*}
    \liminf_{j\to+\infty} \F^T_{\ej}(u_j) & \geq \liminf_{j\to+\infty} \F^T_{\ej}(t_L(u_j))-\eta \\
    & \geq \int_{\Omega} f^T_{\rm hom}(t_L(u))\,dx+\gamma\int_\Omega\f^T(t_L(u))\,dx-\eta, 
\end{align*}
 and then, letting $L\to+\infty$, we get
\begin{equation*}
    \liminf_{j\to+\infty} \F^T_{\ej}(u_j) \geq \int_{\Omega} f^T_{\rm hom}(u)\,dx+\gamma\int_\Omega\f^T(u)\,dx-\eta.
\end{equation*}
The conclusion follows by the arbitrariness of $\eta$.

\smallskip

{\bf Step 2.} Let $T>2$ be a fixed natural and let $u\in W^{1,d}(\Omega;\rr^m)$. We prove that there exists a sequence $u_j\to u$ in $L^d(\Omega; \rr^m)$ such that
\begin{equation}\label{claim limsup T}
     \limsup_{j\to+\infty} F^T_{\ej}(u_j)\leq \int_\Omega f^T_{\rm hom}(\nabla u)\,dx+\gamma\int_\Omega\f^T(u)\, dx.
\end{equation}

Fix $\widetilde{\Omega}$ a bounded open subset of $\rr^d$ with Lipschitz boundary such that $\overline{\Omega}\subset\widetilde{\Omega}$ and, by Theorem~\ref{theorem: Gamma unconstrained}, consider a sequence $\widetilde{u}_j\in \A_{\mu_{\ej}}(\widetilde{\Omega};\rr^m),j\in\NN,$ converging to $u$ in $L^d(\widetilde{\Omega};\rr^m)$ such that
\begin{equation}\label{recovery unconstrained}
    \lim_{j\to+\infty}\F^T_{\ej}(\widetilde{u}_j, \widetilde{\Omega})= \int_{\widetilde{\Omega}}f^T_{\rm hom}(\nabla u)\,dx.
\end{equation}
It is not restrictive to assume that $u$ is bounded and then, resorting to Lemma~\ref{lemma : siga}, we may also suppose that 
\begin{equation}\label{bound infty recovery}
\sup_j\|\widetilde{u}_j\|_{L^\infty(\widetilde{\Omega};\rr^m)}<+\infty.
\end{equation}

As in the first step, we fix $k\geq2$ natural and $\eta\in(0,1)$, and we use Proposition \ref{proposition: psi limite}$(ii)$ with $L=2^{N-k}$ to obtain that there exists $N^*$ such that
\begin{equation}\label{scelta N limsup}
   (\log 2^N)^{d-1}\f^T_{2^{N-k}}(z)\leq(1+\eta)\f^T(z) \qquad \text{ if } N\geq N^*
\end{equation}
for every $z\in \rr^m$. Then, we fix $N>\max\{N^*,k+3\}$. We apply  Proposition \ref{proposition: psi limite}$(i)$ with $\ell=1$ and $L=2^{N-k}$ to obtain that there exists $\s^*< 1/T$ such that
\begin{equation}\label{scelta j limsup}
    \f^T_{1,2^{N-k},\s}(z)\leq (1+\eta)\f^T_{2^{N-k}}(z) \qquad \text{ if } \s\leq \s^*
\end{equation}
for every $z\in \rr^m$, and we assume that $\ej/\rj<\s^*$ for every $j\in\NN$.

We apply Lemma \ref{lemma: joining} to the sequence $\{\widetilde{u}_j\}_j$ on $\widetilde{\Omega}$, with $T,k,N$ fixed as above and $\{S_j\}_j$ a sequence of naturals such that
\begin{equation}\label{scelta Sj limsup}
    \rj2^{NS_j}< \frac{\dej}{2}\leq \rj2^{N(S_j+1)}
\end{equation}
for every $j\in\NN$. We obtain a sequence $w_{j}\in \mathcal{A}_{\varepsilon_{j}}(\widetilde{\Omega};\mathbb{R}^{m}), j\in\NN,$ converging to $u$ in $L^{d}(\widetilde{\Omega};\mathbb{R}^{m})$ such that, for every $j$ large enough, the following hold:
\begin{equation*}
    w_{j}=\widetilde{u}_{j}  \qquad \mu_{\ej} \text{-a.e. on } \widetilde{\Omega} \setminus \Bigl[ \bigcup_{\substack{i \in Z_{j}(\widetilde{\Omega}) \\ h\in\{1,\dots, S_j\}}}C_{j}^{i, h}\Bigr],
\end{equation*}
\begin{equation}\label{bordointerno-limsup}
    w_{j}=\widetilde{u}_{j}^{i, S_j}  \qquad \mu_{\ej} \text{-a.e. on } \Q(i\delta_j,\rho^{i,S_j}_{j}+\varepsilon_j T)\setminus Q(i\delta_j,\rho^{i, S_j}_{j}-\varepsilon_j T),
\end{equation}
\begin{equation}\label{differenzaeneriga-limsup}
    \F_{\ej}^T(w_j,\widetilde{\Omega})\leq \F_{\ej}^T(\widetilde{u}_j,\widetilde{\Omega})+\frac{C}{k}
\end{equation}
for every $i\in Z_j(\widetilde{\Omega})$ and some $C>0$ independent of $j$.

In analogy with the first step, we let
\begin{equation*}
    E_j:=\bigcup_{i\in Z_j(\Omega)} Q(i\dej, \rho_j^{i, S_j})
\end{equation*}
and
\begin{equation*}
    \widetilde{E}_j:=\bigcup_{i\in Z_j(\widetilde{\Omega})} Q(i\dej, \rho_j^{i, S_j})
\end{equation*}
and we start by defining our recovery sequence on $\Omega\setminus \widetilde{E}_j$.

\smallskip

\underline{{\it Analysis far from the perforations.}} We set
\begin{equation*}
    u_j(x):=w_j(x) \qquad \text{ for }\mu_{\ej} \text{-a.e. } x\in\Omega\setminus \widetilde{E}_j,
\end{equation*}
and observe that by \eqref{differenzaeneriga-limsup} we have
\begin{equation*}
    \F^T_{\ej}(u_j, \Omega\setminus \widetilde{E}_j )\leq\F^T_{\ej}(w_j, \widetilde{\Omega}) \leq \F_{\ej}^T(\widetilde{u}_j, \widetilde{\Omega})+\frac{C}{k}.
\end{equation*}
Hence, by \eqref{recovery unconstrained} we obtain
\begin{equation}\label{claim limsup stima fuori}
\limsup_{j\to+\infty}  \F^T_{\ej}(u_j, \Omega\setminus \widetilde{E}_j) \leq \int_{\widetilde{\Omega}} f^T_{\rm hom}(\nabla u)\,dx + \frac{C}{k},
\end{equation}
which is the desired upper bound.

\smallskip

\underline{{\it Analysis near the perforations.}} Now we further modify the functions $\{w_j\}_j$ near the perforations that are far from $\partial \widetilde{\Omega}$. Recalling the definition of $Z_j(\widetilde{\Omega})$ in \eqref{insieme di indici}, upon assuming that $j$ is large enough, we have that
\begin{equation*}
\ZZ^d\cap \frac{1}{\dej}\Omega \subset Z_j(\widetilde{\Omega}),
\end{equation*} 
hence, this amounts to modifying the functions $\{w_j\}_j$ on all the perforations of $\Omega$, including those that intersect $\partial\Omega$.

For every $j\in\NN$, $i\in Z_j(\widetilde{\Omega})$, and $h\in\{0,\dots,S_j-2\}$, consider $\overline{v}^{i,h}_j\in\A_{\ej}(\rr^d;\rr^m)$ an admissible function for the minimum problem $\f^T_{\rj2^{Nh},\rj2^{N(h+1)}, \ej}(\widetilde{u}^{i,S_j}_j/S_j)$ such that
\begin{equation}\label{funzione ottimale limsup}
    \F_{\ej}^T(\overline{v}^{i,h}_j, Q(\rj 2^{N(h+1)})\setminus \Q(\rj 2^{Nh})) \leq (1+\eta)\f^T_{\rj2^{Nh},\rj2^{N(h+1)}, \ej}(\widetilde{u}^{i,S_j}_j/S_j).
\end{equation}
The case $h=S_j-1$ has to be treated separately taking into account that $\rho_j^{i,S_j}$ differs from $r_j2^{NS_j}$. In such a case, we consider $\overline{v}^{i,S_j-1}_j\in\A_{\ej}(\rr^d;\rr^m)$ an admissible function for the minimum problem $\f^T_{\rj2^{N(S_j-1)},\rho^{i,S_j}_j,\ej}(\widetilde{u}^{i,S_j}_j/S_j)$ such that
\begin{equation}\label{funzione ottimale limsup bis}
    \F_{\ej}^T(\overline{v}^{i,S_j-1}_j, Q(\rho_j^{i,S_j})\setminus \Q(\rj 2^{N(S_j-1)})) \leq (1+\eta)\f^T_{\rj2^{N(S_j-1)},\rho^{i,S_j}_j,\ej}(\widetilde{u}^{i,S_j}_j/S_j).
\end{equation}
For every $h\in\{0,\dots,S_j-1\}$ we let 
\begin{equation*}
v^{i,h}_j(x):=\overline{v}_j^{i,h}(x-i\dej)+\frac{h}{S_j}\widetilde{u}^{i,S_j}_j  \qquad \text{ for } \mu_{\ej} \text{-a.e. }x\in \rr^d,
\end{equation*}
and we note that, by the admissibility of $\overline{v}_j^{i,h}$, we have
\begin{equation*}
v^{i,h}_j=
     \begin{cases}
  \displaystyle\frac{h}{S_j}\widetilde{u}^{i,S_j}_j &  \mu_{\ej} \text{-a.e. on }\Q(i\dej,\rj 2^{Nh}+\ej T),\\[10pt]
   \displaystyle\frac{h+1}{S_j}\widetilde{u}^{i,S_j}_j & \mu_{\ej} \text{-a.e. on } [Q(i\dej, \rj2^{N(h+1)}-\ej T)]^c,
    \end{cases}
\end{equation*}
for every $h\in\{0,\dots,S_j-2\}$ and 
\begin{equation*}
v^{i,S_j-1}_j=
     \begin{cases}
    \displaystyle\frac{S_j-1}{S_j}\widetilde{u}^{i,S_j}_j & \mu_{\ej} \text{-a.e. on }\Q(i\dej,\rj 2^{N(S_j-1)}+\ej T), \\[10pt]
\widetilde{u}_j^{i,S_j} & \mu_{\ej} \text{-a.e. on } [Q(i\dej, \rho_j^{i,S_j}-\ej T)]^c.
    \end{cases}
\end{equation*}
For every $i\in Z_j(\widetilde{\Omega})$, we set
\begin{equation*}
    u_j(x):=\begin{cases}
    v_j^{i,0}(x) & \text{ if } x\in \Q(i\dej, \rj 2^N), \\[3pt]
    v_j^{i,h}(x) & \text{ if } x\in Q(i\dej,\rj 2^{N(h+1)})\setminus \Q(i\dej, \rj2^{Nh}),  h\in\{1,\dots,S_j-2\}, \\[3pt]
     v_j^{i,S_j-1}(x) & \text{ if } x\in Q(i\dej,\rho_j^{i,S_j})\setminus Q(i\dej, \rj2^{N(S_j-1)}),
    \end{cases}
\end{equation*}
where the equality is valid for $\mu_{\ej}$-a.e. $x\in Q(i\dej, \rho_j^{i,S_j})$, and we observe that
\begin{equation}\label{limsup DBC dentro}
    u_j(x)=   v^{i,0}_j(x)=0, \qquad  \text{ for } \mu_{\ej}\text{-a.e.} \ x\in \Q(i\dej, \rj).
\end{equation}

We now estimate the energy near the perforations that are far from $\partial \widetilde{\Omega}$. Recalling \eqref{funzione ottimale limsup} and \eqref{funzione ottimale limsup bis}, we get
\begin{align} \notag
    \F^T_{\ej}(u_j, Q(i\dej,\rho_j^{i,S_j})) & =  \sum_{h=0}^{S_j-2} \F^T_{\ej}(v_j^{i,h},  Q(i\dej,\rj 2^{N(h+1)})\setminus Q(i\dej,\rj 2^{Nh})) \\ \notag 
    & \quad+ \F^T_{\ej}(v_j^{i,S_j-1},  Q(i\dej,\rho_j^{i,S_j})\setminus Q(i\dej, \rj2^{N(S_j-1)})) \\ \notag
    & =  \sum_{h=0}^{S_j-2} \F_{\ej}^T(\overline{v}^{i,h}_j,  Q(\rj 2^{N(h+1)})\setminus Q(\rj 2^{Nh})) \\ \notag
    & \quad +  \F_{\ej}^T(\overline{v}^{i,S_j-1}_j, Q(\rho_j^{i,S_j})\setminus Q(\rj2^{N(S_j-1)}))\\ \notag
    &  \leq \sum_{h=0}^{S_j-2} (1+\eta)\f^T_{\rj2^{Nh},\rj2^{N(h+1)}, \ej}(\widetilde{u}^{i,S_j}_j/S_j) + (1+\eta)\f^T_{\rj2^{N(S_j-1)},\rho^{i,S_j}_j, \ej}(\widetilde{u}^{i,S_j}_j/S_j) \\ \label{stima limsup no riscalamento}
    & \leq (1+\eta)\sum_{h=0}^{S_j-1} \f^T_{\rj2^{Nh},r_j2^{N(h+1)-k}, \ej}(\widetilde{u}^{i,S_j}_j/S_j),
\end{align}
where the last inequality follows by Remark \ref{rmk: funzioni ausiliarie}(vi) and by the fact that, for $j$ large enough, 
\begin{equation*}
    \rho_j^{i,S_j} = \Bigl\lfloor\frac{3}{4}\frac{\rj}{\varepsilon_{j}}2^{NS_j-k_{i,S_j}}\Bigr\rfloor\varepsilon_{j} \geq \rj2^{NS_j-k}
\end{equation*}
for every $i$ since $k_{i,S_j}\leq k-1$. Upon setting 
\begin{equation*}
    \sj^h:=\frac{\ej}{\rj}2^{-Nh}, \qquad j\in\NN, h\in\{0,\dots S_j-1\},
\end{equation*}
Remark \ref{rmk: funzioni ausiliarie}(v) yields
\begin{equation*}
    \f^T_{\rj2^{Nh},\rj2^{N(h+1)-k}, \ej}(\widetilde{u}^{i,S_j}_j/S_j)=\f^T_{1,2^{N-k}, \sj^h}(\widetilde{u}^{i,S_j}_j/S_j),
\end{equation*}
and then \eqref{stima limsup no riscalamento} can be rewritten as
\begin{equation*}
    \F^T_{\ej}(u_j, Q(i\dej,\rho_j^{i,S_j})) \leq (1+\eta)\sum_{h=0}^{S_j-1}\f^T_{1,2^{N-k}, \sj^h}(\widetilde{u}^{i,S_j}_j/S_j).
\end{equation*}
As $\sj^h\leq\ej/\rj<\s^*$ for every $j\in\NN$ and $h\in\{0,\dots,S_j-1\}$, we use \eqref{scelta N limsup} and \eqref{scelta j limsup} and we employ the $d$-homogeneity of $\f^T$ to get 
\begin{align*}
    \F^T_{\ej}(u_j, Q(i\dej,\rho_j^{i,S_j})) & \leq (1+\eta)^2S_j \f^T_{2^{N-k}}(\widetilde{u}^{i,S_j}_j/S_j) \\
    & \leq  (1+\eta)^3S_j(\log 2^N)^{1-d}\f^T(\widetilde{u}^{i,S_j}_j/S_j) = (1+\eta)^3(S_j\log 2^N)^{1-d}\f^T(\widetilde{u}^{i,S_j}_j).
\end{align*}
By the second
inequality in \eqref{scelta Sj limsup}, we have
\begin{equation*}
    S_j\log 2^N \geq \log\Bigl(\frac{\dej}{2\rj}\Bigr)-\log 2^N
\end{equation*}
for every $j\in\NN$; hence, taking into account that $\dej/\rj\to+\infty$ as $j\to+\infty$ and that $N$ and $\eta$ are fixed, it is not restrictive to suppose that
\begin{equation*}
 S_j\log 2^N \geq (1+\eta)^{\frac{1}{1-d}}\log\Bigl(\frac{\dej}{2\rj}\Bigr)
\end{equation*}
for every $j\in\NN$. We obtain that
\begin{equation*}
    \F^T_{\ej}(u_j, Q(i\dej,\rho_j^{i,S_j})) \leq (1+\eta)^4\Bigr(\log\Bigl(\frac{\dej}{2\rj}\Bigr)\Bigl)^{1-d}\f^T(\widetilde{u}^{i,S_j}_j).
\end{equation*}

Finally, we sum over $i\in Z_j(\widetilde{\Omega})$. By \eqref{bound infty recovery}, we are in position to apply Lemma \ref{lemma: integrale} and then, also using \eqref{dej^d}, we get
\begin{align} \notag
    \limsup_{j\to+\infty}\F^T_{\ej}(u_j, \widetilde{E}_j) & = \limsup_{j\to+\infty}\sum_{i\in Z_j(\widetilde{\Omega})}\F^T_{\ej}(u_j, Q(i\dej,\rho_j^{i,S_j})) \\ \notag
    & \leq (1+\eta)^4\limsup_{j\to+\infty}\sum_{i\in Z_j(\widetilde{\Omega})} \Bigl(\log\Bigl(\frac{\dej}{2\rj}\Bigr)\Bigr)^{1-d}\f^T(\widetilde{u}^{i,S_j}_j) \\
\notag    & = (1+\eta)^4\limsup_{j\to+\infty} \sum_{i\in Z_j(\widetilde{\Omega})}\gamma\dej^d\f^T(\widetilde{u}^{i,S_j}_j)  \\ \label{claim limsup stima dentro}
    & = (1+\eta)^4 \gamma\int_{\widetilde{\Omega}}\f^T(u)\,dx.
    \end{align}    

Using \eqref{bordointerno-limsup} and combining \eqref{claim limsup stima fuori} and \eqref{claim limsup stima dentro}, we conclude that
\begin{align*}    \limsup_{j\to+\infty}\F^T_{\ej}(u_j) & = \limsup_{j\to+\infty}\, (\F^T_{\ej}(u_j, \Omega\setminus \widetilde{E}_j)+\F^T_{\ej}(u_j, \widetilde{E}_j))  \\
    &  \leq \int_{\widetilde{\Omega}} f^T_{\rm hom}(\nabla u)\, dx + (1+\eta)^4\gamma\int_{\widetilde{\Omega}}\f^T(u)\,dx+\frac{C}{k},
    \end{align*}
and \eqref{claim limsup T} follows by the arbitrariness of $\eta,k$ and $\widetilde{\Omega}$ and the fact that, in virtue of \eqref{limsup DBC dentro}, $\{u_j\}_j$ is an admissible sequence for $\{F^T_{\ej}\}_j$.

It remains to verify that $u_j\to u$ in $L^d(\Omega;\rr^m)$. By construction, we have
\begin{align*}
         \lim_{j\to+\infty}   \int_{\widetilde{\Omega}}|u_j-w_j|^{d}\,dx & = \lim_{j\to+\infty}   \int_{\widetilde{E}_j}|u_j-w_j|^{d}\,dx \\
                    &= \lim_{j\to+\infty} \sum_{i \in Z_j(\widetilde{\Omega})} \int_{Q(i\dej, \rho_j^{i,S_j})}|u_{j}-w_j|^{d}\,dx \\
                    & = \lim_{j\to+\infty} \sum_{i \in Z_j(\widetilde{\Omega})} \ej^d\int_{Q(i\dej, \rho_j^{i,S_j})}|u_{j}-w_j|^{d}\,d\mu_{\ej}.
\end{align*}
By \eqref{bordointerno-limsup} we have that $u_j-w_j=0\ \mu_{\ej}$-a.e. on $Q(i\dej,\rho_j^{i,S_j})\setminus Q(i\dej,\rho_j^{i,S_j}-\ej T)$ for every $i\in Z_j(\widetilde{\Omega})$. Therefore, we apply a rescaled version of Poincaré inequality in Proposition \ref{prop: pcre} with
\begin{equation*}
   \sigma=\ej, \qquad  \tau=\rho_j^{i,S_j}, \qquad x_0=i\dej, \qquad A=\frac{1}{\rho_j^{i,S_j}}(Q(i\delta_j,\rho_j^{i,S_j})-i\dej)=Q(1),
\end{equation*}
and we obtain
\begin{align*}
&\hspace{-0.8cm}\lim_{j\to+\infty}  \sum_{i \in Z_j(\widetilde{\Omega})}  \ej^d\int_{Q(i\dej, \rho_j^{i,S_j})}|u_{j}-w_j|^{d}\,d\mu_{\ej} \\
& \leq C\lim_{j\to+\infty}   \sum_{i \in Z_j(\widetilde{\Omega})}(\rho_j^{i,S_j})^dG_{\ej}(u_j-w_j, Q(i\dej, \rho_j^{i,S_j})) \\
    & \leq  C\lim_{j\to+\infty} \dej^d  \sum_{i \in Z_j(\widetilde{\Omega})}\bigl(G_{\ej}(u_j, Q(i\dej, \rho_j^{i,S_j}))+G_{\ej}(w_j, Q(i\dej, \rho_j^{i,S_j}))\bigr) \\
    & \leq  C \lim_{j\to+\infty}  \dej^d  (\F^T_{\ej}(u_j, \widetilde{E}_j)+\F^T_{\ej}(w_j, \widetilde{\Omega})) =0
\end{align*}
in light of \eqref{recovery unconstrained}, \eqref{differenzaeneriga-limsup}, and \eqref{claim limsup stima dentro}. Taking into account that $w_j\to u$ by Lemma \ref{lemma: joining}, we infer that $u_j\to u$.

\smallskip

{\bf Step 3.} In the first two steps we proved that, for every $T>2$ natural, it holds
\begin{equation*}
    \Gamma\text{-}\lim_{j\to+\infty} F^T_{\ej}(u) = \int_\Omega f^T_{\rm hom}(\nabla u)\,dx+\gamma\int_\Omega\f^T(u)\, dx, \qquad u\in W^{1,d}(\Omega;\rr^m).
\end{equation*}
Hence, resorting to Proposition \ref{prop_conv_fhom_uniforme},  Proposition \ref{proposition: gamma_troncamento}, and Proposition \ref{proposition: psi limite}$(iii)$, we obtain
\begin{equation*}
       \Gamma\text{-}\lim_{j\to+\infty} F_{\ej}(u) = \int_\Omega f_{\rm hom}(\nabla u)\,dx+\gamma\int_\Omega\f(u)\, dx,
\end{equation*}
which concludes the proof of Theorem \ref{thm: main discreto}.

\appendix

\section{Appendix}
\label{appendix}

In this appendix we prove the Poincaré-Wirtinger and the Poincaré inequality corresponding to Propositions \ref{prop: poincare-wirtinger} and \ref{prop: pcre}, with a particular focus on the discrete case. For the sake of completeness, we consider a slightly more general setting. For every $\s>0$, $A \subseteq \mathbb{R}^{d}$ a $\ms$-measurable set, and $p\in[1,+\infty)$, we let
\begin{equation*}
    \mathcal{A}_{\ms}(A;\mathbb{R}^{m}):=\{u: \mathbb{R}^d\rightarrow \mathbb{R}^{m}: u(x)=u(\alpha) \text{ if } x\in \alpha +[0,\s)^{d},\alpha \in A\cap \s\ZZ^d \}\cap L^p(A;\rr^m)
\end{equation*}
in the discrete case and 
\begin{equation*}
    \mathcal{A}_{\ms}(A;\mathbb{R}^{m}):=\{u:\rr^d\to\rr^m: u \text{ is }\mathcal{L}^d\text{-measurable}\}\cap L^p(A;\rr^m)
\end{equation*}
in the continuous case. We consider reference energies
\begin{equation*}
G^{p}_\s(u,A):=\int_{B(r_0)}\int_{A_{\ms}(\xi)}
|u(x+\s\xi)-u(x)|^p\,d\ms(x) \, d\mu_1(\xi),
\end{equation*} 
with $r_0\in(1,2)$ as in (G0) and $u \in \A_{\ms}(A ;\mathbb{R}^{m})$.  

In the discrete case, the following lemma allows one to control interactions of pairs of points in a possibly non-convex domain by means of interactions of pairs of points that are connected by a segment that lies in the domain.   

\begin{lemma}\label{lemma: cammini}
Let $A\subset \mathbb{R}^{d}$ be a connected set which is finite union of open rectangles $A=\cup_{h=1}^NA_h$ with $N\geq2$ and such that
\begin{equation*}
    A_h\cap A_{h+1}\cap \s\ZZ^d\neq \emptyset \qquad \text{ for every } h\in\{1,\dots,N-1\}.
\end{equation*}
There exist positive constants $\s_{0}$ and $C$ depending on $A_1,\dots,A_N$ such that it holds 
\begin{align}\label{eq: archi}
    \sum_{1\leq j<\ell\leq N}\sum_{\alpha\in (A_j\setminus A_\ell) \cap\s\ZZ^d}\sum_{\beta \in (A_\ell\setminus A_j)\cap\s\ZZ^d}|u(\alpha)-u(\beta)|^p & \leq CN^{p+1} \sum_{h=1}^{N}\sum_{\alpha\in A_h\cap \s \ZZ^d}\sum_{\beta\in A_h\cap \s \ZZ^d}|u(\alpha)-u(\beta)|^p
\end{align}
    for every $\s<\s_0$ and $u:A\cap\s \ZZ^d\rightarrow \mathbb{R}^{m}$.
\end{lemma}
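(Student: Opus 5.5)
The plan is a chain-of-rectangles argument. Fix $1\le j<\ell\le N$ and $\alpha\in (A_j\setminus A_\ell)\cap\s\ZZ^d$, $\beta\in (A_\ell\setminus A_j)\cap\s\ZZ^d$. Since the labelling is a chain in which consecutive rectangles share lattice points, I would choose for every $h\in\{1,\dots,N-1\}$ a fixed point $\gamma_h\in A_h\cap A_{h+1}\cap\s\ZZ^d$. The triangle inequality along the path $\alpha\to\gamma_j\to\gamma_{j+1}\to\dots\to\gamma_{\ell-1}\to\beta$, together with convexity of $t\mapsto t^p$, then gives
\begin{equation*}
|u(\alpha)-u(\beta)|^p\le (\ell-j+1)^{p-1}\Bigl(|u(\alpha)-u(\gamma_j)|^p+\sum_{h=j+1}^{\ell-1}|u(\gamma_{h-1})-u(\gamma_h)|^p+|u(\gamma_{\ell-1})-u(\beta)|^p\Bigr).
\end{equation*}
Each pair in this sum lies in a single rectangle: $\alpha,\gamma_j\in A_j$; $\gamma_{h-1},\gamma_h\in A_h$; and $\gamma_{\ell-1},\beta\in A_\ell$.

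A single pair $(\gamma_{h-1},\gamma_h)$ is a poor bound by itself, since a fixed pair may carry a large difference. So I would not fix the $\gamma_h$ but average over them. Replace $\gamma_h$ by a variable point ranging over the lattice points of a fixed subrectangle $R_h\subset A_h\cap A_{h+1}$, chosen with nonempty interior. This is possible because $A_h\cap A_{h+1}$ is a nonempty open rectangle, taking $\s<\s_0$ small. Then average. Writing $n_h:=\#(R_h\cap\s\ZZ^d)\ge c\,\s^{-d}$, I average the above inequality over all choices $(\gamma_j,\dots,\gamma_{\ell-1})\in\prod_h R_h\cap\s\ZZ^d$. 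A term involving only $\gamma_{h-1},\gamma_h$ becomes
\begin{equation*}
\frac{1}{n_{h-1}n_h}\sum_{\gamma\in A_h}\sum_{\gamma'\in A_h}|u(\gamma)-u(\gamma')|^p ,
\end{equation*}
and a term $|u(\alpha)-u(\gamma_j)|$ becomes an average over $\gamma_j$.

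Next sum over $\alpha\in A_j$ and $\beta\in A_\ell$. The number of $\alpha$ is at most $\#(A_j\cap\s\ZZ^d)\le C\s^{-d}$, and similarly for $\beta$. Consider first the contribution of a middle term. It carries the factor $\#\alpha\cdot\#\beta/(n_{h-1}n_h)\le C$, where $C$ depends on the ratios of the volumes of the $A_h$ to those of the $R_h$. The endpoint term $|u(\alpha)-u(\gamma_j)|^p$, summed over $\alpha$ and averaged over $\gamma_j$, then summed over $\beta$, gives $\#\beta/n_j\le C$ times the double sum over $A_j$. Hence every term is bounded by $C(\ell-j+1)^{p-1}$ times the right-hand inner double sums over single rectangles.

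Finally I sum over the pairs $j<\ell$. Each rectangle $A_h$ appears, with the prefactor $N^{p-1}$, in at most $N^2$ pairs. This gives a crude total of $CN^{p+1}$, matching \eqref{eq: archi}; if needed, any count of the form $N^{p-1}\cdot N^2$ suffices.

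The main obstacle is the uniformity of the lattice-point counts. I need $n_h\ge c\s^{-d}$ and $\#(A_h\cap\s\ZZ^d)\le C\s^{-d}$ with constants independent of $\s$, and this is exactly what fixes $\s_0$ in terms of the inradii of $A_h\cap A_{h+1}$. There is also bookkeeping at the two endpoints: the hypothesis $\alpha\notin A_\ell$ is not needed, since the chain is used anyway. If $\ell=j+1$, the path is just $\alpha\to\gamma_j\to\beta$.
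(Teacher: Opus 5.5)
Your proof is correct, but it takes a different route from the paper's.

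\textbf{The paper's route.} It assigns to each pair $(\alpha,\beta)$ one deterministic path through lattice points of equal-sized cubes $Q_h\subseteq A_h\cap A_{h+1}$, all containing the same number $C(\s)$ of lattice points. These points are labelled through surjections $I_h\colon A_h\cap\s\ZZ^d\to\{1,\dots,C(\s)\}$ with uniformly bounded fibres. The intermediate nodes alternate between the label of $\beta$ and that of $\alpha$, with an extra node when $\ell-j$ is odd. As a result, any segment of a path determines $(\alpha,\beta)$ up to these bounded fibres, and the core of the argument is the multiplicity bound $\#S(\lambda,\eta)\le CN^2$.

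\textbf{Your route.} You average the chain inequality over all choices of intermediate points in $\prod_h (R_h\cap\s\ZZ^d)$, so each pair is routed fractionally rather than along a single path. The multiplicity count is then replaced by the trivial estimate $\#(A_j\cap\s\ZZ^d)\,\#(A_\ell\cap\s\ZZ^d)/(n_{h-1}n_h)\le C$. You need no equal-count cubes, no balanced labellings, and no parity case distinction.

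\textbf{Comparison.} Both arguments give the same factor $N^{p-1}\cdot N^2$. Both constants are governed by a squared volume ratio, $\bigl(\max_h|A_h|/\min_h|R_h|\bigr)^2$ in your case, and are therefore invariant under homotheties, as the subsequent remark requires. Your argument is shorter and more transparent. The paper's produces an explicit polygonal path for every pair, but that extra structure is not needed for the inequality.

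One small point: the lemma as stated does not assume the rectangles are axis-parallel, so $A_h\cap A_{h+1}$ need not be a rectangle. This does not affect your argument, because you only need $A_h\cap A_{h+1}$ to be a nonempty open set containing a fixed cube $R_h$, and it is.
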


\begin{proof}
We consider $Q_1,\dots, Q_{N-1}$ open cubes with sides parallel to the coordinate axes with the property that $Q_h\subseteq A_h\cap A_{h+1}$ for every $h\in\{1,\dots, N-1\}$,
\begin{equation*}
   |Q_1|=\dots= |Q_{N-1}|\geq \frac{1}{2}\min\{|A_h\cap A_{h+1}| :  h\in\{1,\dots, N-1\}\}, 
\end{equation*}
and 
\begin{equation*}
   C(\s):= \#(Q_1\cap \s\ZZ^d)=\dots = \#(Q_{N-1}\cap \s\ZZ^d)
\end{equation*}
for every $\s$ small enough. We label the elements of each set $Q_h\cap \s\ZZ^d$ as $\{q_h^1,\dots, q_h^{C(\s)}\}$.

We define surjective functions
\begin{equation*}
   I_{h}: A_h\cap\s\mathbb Z^d \to \{1,\dots, C(\s)\}, \qquad h\in\{1,\dots,N\},
\end{equation*}
such that
\begin{equation*}
    \#(I_h^{-1}(\{k\})) \leq \Bigl\lceil\frac{\#(A_{h}\cap \s \mathbb Z^d)}{C(\s)}\Bigr\rceil
\end{equation*}
for every $h\in\{1,\dots,N\}$ and $k\in\{1,\dots, C(\s)\}$, where we let $\lceil x\rceil$ denote the upper integer part of $x$. Upon assuming that $\s$ is sufficiently small, we can then suppose that
\begin{equation}\label{stima cardinalità}
    \#(I_h^{-1}(\{k\})) \leq 8|A_h|/\min\{|A_h\cap A_{h+1}| :  h\in\{1,\dots, N-1\}\} 
\end{equation}
for every $h\in\{1,\dots,N\}$ and $k\in\{1,\dots, C(\s)\}$. 

Let $1\leq j<\ell\leq N$ and consider points $\alpha\in (A_j\setminus A_\ell)\cap\s\ZZ^d$ and $\beta \in (A_\ell\setminus A_j)\cap\s\ZZ^d$. We define the intermediate nodes that shall allow us to connect these points. We set
 \begin{equation*}
     p_{j+k}:=\begin{cases}
         q_{j+k}^{I_\ell(\beta)} & \text{ if } k \text{ is even,}\\[5pt]
         q_{j+k}^{I_j(\alpha)} & \text{ if } k \text{ is odd,}
     \end{cases}
     \qquad k\in\{0,\dots,\ell-j-1\}.
 \end{equation*}
For sake of notation, it is convenient to set 
 \begin{equation*}
n(j,\ell):= \begin{cases}
        \ell & \text{ if } \ell-j \text{ is even,} \\
     \ell+1 & \text{ if } \ell-j \text{ is odd,}
    \end{cases}   
\end{equation*}
and 
\begin{equation*}
    p_{j-1}:=\alpha, \qquad p_{n(j,\ell)}:=\beta.
\end{equation*}
Now we distinguish two cases:
\begin{itemize}
 \item if $\ell-j$ is even, we have that $p_{\ell-1}=q_{\ell-1}^{I_j(\alpha)}$ and we define
 \begin{equation*}
     \mathcal P(\alpha,\beta):=
        \{\alpha=p_{j-1}, p_j, \dots, p_{\ell-1}, p_\ell=p_{n(j,\ell)}=\beta\}, 
 \end{equation*}
\item if $\ell-j$ is odd, we have that $p_{\ell-1}=q_{\ell-1}^{I_\ell(\beta)}$, then we set $p_\ell:=q_{\ell-1}^{I_j(\alpha)}$ and we define
\begin{equation*}
    \mathcal P(\alpha,\beta):=\{\alpha=p_{j-1}, p_j, \dots, p_{\ell}, p_{\ell+1}=p_{n(j,\ell)}=\beta\} .
\end{equation*}   
 \end{itemize}
Finally, we connect $\alpha$ and $\beta$ using the path obtained connecting through segments the points of $\mathcal P(\alpha,\beta)$ in the established order.

If $\ell-j$ is even, the path associated with $\mathcal P(\alpha,\beta)$ is made up of at most $\ell-j+1$ segments and 
\begin{equation*}
 [p_h,p_{h+1}]\subset A_{h+1} \quad \text{ for every } h\in\{j-1,\dots, \ell-1\}.
\end{equation*}

If $\ell-j$ is odd, the path associated with $\mathcal P(\alpha,\beta)$ is made up of at most $\ell-j+2$ segments and 
\begin{equation*}
  [p_h,p_{h+1}]\subset A_{h+1} \quad\text{ for every } h\in\{j-1,\dots, \ell-1\}, \qquad [p_{\ell},p_{\ell+1}]\subset A_\ell.
\end{equation*}

Let us consider a segment $[\lambda,\eta]\subset A_{h}$ for some $h\in\{1,\dots,N\}$. We let $S(\lambda,\eta)$ denote the set of pairs $(\alpha,\beta)\in(A_j\setminus A_\ell)\cap\s\ZZ^d\times (A_\ell\setminus A_j)\cap\s\ZZ^d, 1\leq j<\ell\leq N,$ such that $\lambda$ and $\eta$ are two consecutive points belonging to the path associated with $\mathcal{P}(\alpha,\beta)$. 

To estimate the cardinality of $S(\lambda,\eta)$, let us assume that $\ell-j$ is even and consider $(\alpha,\beta)\in S(\lambda,\eta)$.  Upon exchanging the roles of $\lambda$ and $\eta$, we have the following possibilities:
\begin{itemize}
    \item[(i)] $\lambda=\alpha=p_{j-1}, \eta=p_j$,
     \item[(ii)] $  \lambda=p_{\ell-1}, \eta=\beta=p_\ell$,
      \item[(iii)] $\lambda=p_m,  \eta=p_{m+1}$ for some $m\in\{j,\dots,\ell-2\}$.
\end{itemize}
In the first case $\alpha$ is uniquely determined and $\eta\in Q_j$, so that $\eta=q_j^{k}$ for a certain $k\in\{1,\dots, C(\s)\}$. Hence, we have that 
\begin{equation*}
q_j^k=\eta=p_j=q^{I_\ell(\beta)}_j.    
\end{equation*}
Therefore, $\beta\in I_\ell^{-1}(\{k\})$ and, by \eqref{stima cardinalità}, the number of possible pairs $(\alpha,\beta)$ is less than or equal to 
\begin{equation*}
    \#(I_\ell^{-1}(\{k\})) \leq 8|A_\ell|/\min\{|A_h\cap A_{h+1}| :  h\in\{1,\dots, N-1\}\}.
\end{equation*}
In the second case $\beta$ is uniquely determined and $\lambda\in Q_{\ell-1}$, so that $\lambda=q_{\ell-1}^{k}$ for a certain $k\in\{1,\dots, C(\s)\}$. Hence, we have that 
\begin{equation*}
    q_{\ell-1}^{k}= \lambda=p_{\ell-1}=q^{I_j(\alpha)}_{\ell-1}.
\end{equation*}
Therefore, $\alpha\in I_j^{-1}(\{k\})$ and the number of possible pairs $(\alpha,\beta)$ is less than or equal to 
\begin{equation*}
    \#(I_j^{-1}(\{k\})) \leq  8|A_j|/\min\{|A_h\cap A_{h+1}| :  h\in\{1,\dots, N-1\}\}.
\end{equation*}
In the third case, since $\lambda\in Q_m$ and $\eta\in Q_{m+1}$, we have that $\lambda=q_m^k$ and $\eta=q_{m+1}^{k'}$ for some $k,k'\in\{1,\dots, C(\s)\}$, and then
\begin{equation*}
  \begin{cases}
      q_m^k=q_m^{I_\ell(\beta)}, \\[3pt]
      q_{m+1}^{k'}=q_{m+1}^{I_j(\alpha)},
  \end{cases}\qquad \text{ or } \qquad  
  \begin{cases}
q_m^k=q_{m}^{I_j(\alpha)}, \\[3pt]
q_{m+1}^{k'}=q_{m+1}^{I_\ell(\beta)}.
  \end{cases}
\end{equation*}
Therefore,
\begin{equation*}
\begin{cases}
   \alpha\in I_j^{-1}(\{k'\}), \\
   \beta \in I_\ell^{-1}(\{k\}), 
\end{cases}
    \qquad \text{ or } \qquad
    \begin{cases}
        \alpha\in I_j^{-1}(\{k\}),\\ \beta \in I_\ell^{-1}(\{k'\}),
    \end{cases} 
\end{equation*}
and the number of possible pairs $(\alpha,\beta)$ in this case is less than or equal to 
\begin{align*}
   & \max\{\#(I_j^{-1}(\{k'\}))\times \#(I_\ell^{-1}(\{k\})), \#(I_j^{-1}(\{k\}))\times \#(I_\ell^{-1}(\{k'\}))\} \\ &\leq64|A_j||A_\ell|/\min\{|A_h\cap A_{h+1}| :  h\in\{1,\dots, N-1\}\}^2.
\end{align*}

The case $\ell-j$ odd is analogous, provided that the additional segment $[p_{\ell-1},p_\ell]$ is also considered.
For this segment, upon exchanging the roles of $\lambda$ and $\eta$, one has
\[
\lambda=p_{\ell-1}=q_{\ell-1}^{I_\ell(\beta)},\qquad
\eta=p_\ell=q_{\ell-1}^{I_j(\alpha)}.
\]
Therefore, the number of admissible pairs $(\alpha,\beta)$ is bounded by
\[
\#I_\ell^{-1}(\{k\})\times\# I_j^{-1}(\{k'\})
\leq
64|A_j||A_\ell|/
\min\{|A_h\cap A_{h+1}|:h \in \{1,\dots,N-1\}\}^2,
\]
which is the same estimate obtained for (iii) in the case $\ell-j$ is even. All other segments are treated as in the even case.

Letting $j,\ell$ vary in $\{1,\dots, N\}$, we obtain that
\begin{align} \notag
    \#S(\lambda,\eta) & \leq 2(64+8) \sum_{1\leq j<\ell\leq N} |A_j||A_\ell|/\min\{|A_h\cap A_{h+1}| :  h\in\{1,\dots, N-1\}\}^2 \\ \label{stima archi}
    & \leq N^2 144 \Bigl(\frac{\max\{|A_h|: h\in\{1,\dots, N\}\}}{\min\{|A_h\cap A_{h+1}| :  h\in\{1,\dots, N-1\}\}}\Bigr)^2=:N^2C.
\end{align}

By Jensen's inequality we have 
\begin{align*}
    |u(\alpha)-u(\beta)|^p &= \Bigl|\sum_{h=j-1}^{n(j,\ell)-1}u(p_{h+1})-u(p_h)\Bigr|^p \\
    &\leq (n(j,\ell)-j+1)^{p-1} \sum_{h=j-1}^{n(j,\ell)-1}|u(p_{h+1})-u(p_h)|^p \leq (2N)^{p-1}\sum_{h=j-1}^{n(j,\ell)-1}|u(p_{h+1})-u(p_h)|^p.
\end{align*}
Recalling that $[p_h,p_{h+1}]\subset A_{h+1}$ for every $h\in\{j-1,\dots, \ell-1\}$ and $[p_{n(j,\ell)-1},p_{n(j,\ell)}]\subset A_\ell$, and using \eqref{stima archi}, we sum over $\alpha$ and $\beta$ and $1\leq j<\ell\leq N$ to get 
\begin{multline*}
     \sum_{1\leq j<\ell\leq N}\sum_{\alpha\in (A_j\setminus A_\ell) \cap\s\ZZ^d}\sum_{\beta \in (A_\ell\setminus A_j)\cap\s\ZZ^d}|u(\alpha)-u(\beta)|^p \\ \leq 2^{p-1}N^{p+1}C \sum_{h=1}^{N}\sum_{\lambda\in A_h\cap \s \ZZ^d}\sum_{\eta\in A_h\cap \s \ZZ^d}|u(\lambda)-u(\eta)|^p,
\end{multline*}
which concludes the proof.
\end{proof}

\begin{remark}
    The constant in \eqref{eq: archi} is far from optimal as the paths connecting pairs of points are not constructed following the geodesics inside $A$. Nevertheless, we note that such a constant is invariant under rescaling of $A$ as it depends on the number of rectangles $A_1,\dots,A_N$ that constitute $A$ and on
    \begin{equation*}
        \frac{\max\{|A_h|: h\in\{1,\dots, N\}\}}{\min\{|A_h\cap A_{h+1}| :  h\in\{1,\dots, N-1\}\}},
    \end{equation*}
    which are unaffected by homotheties of $A$.
\end{remark}

The following result is a more general version of Proposition \ref{prop: poincare-wirtinger}.

\begin{proposition}[Poincaré-Wirtinger inequality on union of rectangles]
\label{Poincaré-Wirtinger inequality on union of rectangles}
Let $A\subset \mathbb{R}^{d}$ be a connected set which is finite union of open rectangles with sides parallel to the coordinate axes $A=\cup_{h=1}^NA_h$ such that, if $N\geq 2$, it holds 
\begin{equation*}
    A_h\cap A_{h+1}\cap \s\ZZ^d\neq \emptyset \qquad \text{ for every } h\in\{1,\dots,N-1\};
\end{equation*}
let $A'\subseteq A$ be a $\ms$-measurable set and $x_0\in \rr^d$. There exist positive constants $\s_{0}$ and $C$ depending on $A, A_1,\dots, A_N, N$ such that, having set
\begin{equation*}
    (u)^{\ms}_E:=\frac{1}{\ms(E)}\int_{E} u\,d\ms
\end{equation*}
for every $E$ $\ms$-measurable set with $\ms(E)>0$, it holds 
\begin{equation}\label{PW-app}
    \int_{x_0+\tau A} |u(x)-(u)^{\ms}_{x_0+\tau A'}|^p\,d\ms(x)\leq C\frac{\ms(x_0+\tau A)}{\ms(x_0+\tau A')}\Bigl(\frac{\tau}{\s}\Bigr)^p G^{p}_\s(u,x_0+\tau A)
\end{equation}
    for every $\tau>0$, $\s\in(0,\s_0\tau)$, and $u \in \mathcal{A}_{\mu_\sigma}(x_0+\tau A;\mathbb{R}^m)$.  
    \end{proposition}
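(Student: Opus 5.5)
We will prove \eqref{PW-app} by reducing to a fixed scale and then bounding the double sum of pairwise differences, treating the continuous and discrete cases in parallel.

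\textbf{Reduction to $\tau=1$, $x_0=0$.} By translation and the scaling $x\mapsto x_0+\tau x$ (exactly as in Remark \ref{rmk: funzioni ausiliarie}(v)), setting $\widetilde u(y):=u(x_0+\tau y)$ and $\widetilde\s:=\s/\tau$ turns the left-hand side into the same quantity on $A$ with measure $\mu_{\widetilde \s}$. In the discrete case we also need $x_0\in\s\ZZ^d$ up to a shift of the lattice; we absorb this by using that $A$ is a finite union of rectangles and the constants are stable under small translations. Both sides transform consistently, the ratio $\ms(x_0+\tau A)/\ms(x_0+\tau A')$ being invariant. Hence it suffices to prove
\begin{equation*}
\int_A|u-(u)^{\ms}_{A'}|^p\,d\ms\le C\frac{\ms(A)}{\ms(A')}\s^{-p}G^p_\s(u,A)\qquad\text{for }\s<\s_0 .
\end{equation*}

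\textbf{Double-sum bound.} By Jensen,
\begin{equation*}
\int_A|u-(u)_{A'}|^p\,d\ms\le\frac{1}{\ms(A')}\int_A\int_{A'}|u(x)-u(y)|^p\,d\ms(y)\,d\ms(x)\le\frac{1}{\ms(A')}\int_A\int_A|u(x)-u(y)|^p\,d\ms\,d\ms .
\end{equation*}
So the core claim is
\begin{equation*}
\int_A\int_A|u(x)-u(y)|^p\,d\ms\,d\ms\le C\,\ms(A)\,\s^{-p}G^p_\s(u,A).
\end{equation*}
We first split the pairs $(x,y)\in A\times A$ into pairs lying in a common rectangle $A_h$ and pairs with $x\in A_j\setminus A_\ell$, $y\in A_\ell\setminus A_j$. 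In the discrete case the latter are controlled by Lemma \ref{lemma: cammini}, which reduces everything to $\sum_h\sum_{\alpha,\beta\in A_h}|u(\alpha)-u(\beta)|^p$. In the continuous case we argue analogously with averaged intermediate points chosen in cubes $Q_h\subset A_h\cap A_{h+1}$: integrate over the intermediate nodes, use the triangle inequality, and apply Jensen.

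\textbf{Single rectangle (main obstacle).} It then remains to treat one rectangle $R=A_h$ with sides parallel to the axes. For $\alpha,\beta\in R\cap\s\ZZ^d$ we connect them by the monotone staircase nearest-neighbour path that changes one coordinate at a time. Since $R$ is convex and axis-parallel, this path stays in $R$ and has length at most $C/\s$ steps. By Jensen,
\begin{equation*}
|u(\alpha)-u(\beta)|^p\le(C/\s)^{p-1}\sum_{\text{edges}}|\Delta u|^p .
\end{equation*}
The key point is a multiplicity count: an edge $[\lambda,\lambda+\s e_k]$ is used by at most $C\s^{-d-1}$ pairs. Indeed, fixing the edge determines the coordinates of $\alpha$ beyond index $k$ and of $\beta$ before index $k$, leaving $O(\s^{-(d-1)})\cdot O(\s^{-1})$-type freedom; more precisely, of order $\s^{-(k-1)}\s^{-1}\s^{-(d-k)}\s^{-1}$ up to constants, i.e.\ $\s^{-d-1}$. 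Summing gives
\begin{equation*}
\sum_{\alpha,\beta}|u(\alpha)-u(\beta)|^p\le C\s^{-p-d}\sum_{\text{edges}}|\Delta u|^p=C\,\ms(R)\,\s^{-p}G^p_\s(u,R),
\end{equation*}
since $\ms(R)\sim\s^{-d}$. The continuous case is the same argument with integrals replacing sums: write $u(y)-u(x)$ as a sum of $O(1/\s)$ increments of size $\s\xi$ with $\xi\in B(r_0)$ along coordinate directions, average over small perturbations $\xi$ in a ball inside $B(r_0)$, and use Fubini, in the spirit of \cite[Proposition 4.2]{AABPT}. Alternatively one may cite that proposition directly for rectangles.

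The counting of path multiplicities, uniformly in $\s$ and keeping track of boundary effects for $\s<\s_0$, is where we expect the real work to lie. Combining the three steps yields \eqref{PW-app} with $C$ depending only on $N$ and the rectangles.
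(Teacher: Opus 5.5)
Your proposal is correct and, in the discrete case, follows the paper's route: Jensen to a double sum, Lemma \ref{lemma: cammini} for pairs lying in different rectangles, and a path-multiplicity bound on each rectangle. Your direct count of $C\s^{-d-1}$ pairs per edge is just the pairwise form of the paper's sum over $\xi\in\frac{2}{\s}A\cap\ZZ^d$ of the estimate adapted from \cite[Lemma 3.6]{AliCic}, and absorbing $A'$ directly in the Jensen step, rather than first reducing to $A'=A$, is only cosmetic.

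In the continuous case you differ from the paper, which simply cites \cite[Proposition 4.2]{AABPT} for Lipschitz sets. You instead chain through averaged intermediate points in $Q_h\subset A_h\cap A_{h+1}$ and only need the single-rectangle estimate. This is slightly more robust, because a connected union of overlapping rectangles need not have Lipschitz boundary: two non-consecutive rectangles can touch at a corner.
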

    
\begin{proof}
    It suffices to prove the statement in the case $x_0=0$ and $\tau=1$ as the general case follows by a translation and scaling argument. Once the statement is proved in the case $A'=A$, the general case follows observing that
    \begin{equation*}
        |(u)_A^{\mu_\s}-(u)_{A'}^{\mu_\s}|^p=\Bigl|\frac{1}{\mu_\s(A')}\int_{A'}(u)_A^{\mu_\s}-u\, d\mu_\s\Bigr|^p
        \leq \frac{1}{\mu_\s(A')}\int_{A'}|(u)_A^{\mu_\sigma}-u|^p\, d\mu_\sigma,
    \end{equation*}
    so that \eqref{PW-app} applied with $A=A'$, $x_0=0$, and $\tau=1$ yield
    \begin{equation*}
        \begin{split}
            \int_A|u(x)-(u)_{A'}^{\mu_\sigma}|^p\, d\mu_\s(x)&\leq 2^{p-1}\Bigl[\int_A|u(x)-(u)_{A}^{\mu_\sigma}|^p\, d\mu_\s(x)+\int_A|(u)_A^{\mu_\sigma}-(u)_{A'}^{\mu_\sigma}|^p\, d\mu_\s(x)\Bigr] \\
            &\leq 2^{p-1}\Bigl[\int_A|u(x)-(u)_{A}^{\mu_\sigma}|^p\, d\mu_\s(x)+\frac{\mu_\s(A)}{\mu_\sigma(A')}\int_{A'}|u(x)-(u)_A^{\mu_\sigma}|^p\, d\mu_\sigma(x)
        \Bigr]\\
        &\leq 2^p\frac{\mu_\s(A)}{\mu_\sigma(A')}\int_{A}|u(x)-(u)_A^{\mu_\sigma}|^d\, d\mu_\sigma(x)\leq C\frac{\mu_\s(A)}{\mu_\sigma(A')}\frac{1}{\s^p}G^{p}_\s(u,A).
        \end{split}
    \end{equation*}
We then prove the statement further assuming that $A'=A$. 

In the continuous setting, the proof can be found in \cite[Proposition 4.2]{AABPT} and, in fact, the result holds for general bounded open sets with Lipschitz boundary. In the discrete case we have to prove that 
 \begin{equation*}
 \sum_{\alpha\in A\cap \s\ZZ^d}|u(\alpha)-(u)_A|^p \leq  C \frac{1}{\s^p}\sum_{k=1}^d \sum_{\alpha\in R_\s^{e_k}(A)}|u(\alpha+\s e_k)-u(\alpha)|^p,
    \end{equation*}
where
\begin{equation*}
    (u)_A=\frac{1}{\#(A\cap \s\ZZ^d)}\sum_{\alpha\in A\cap \s\ZZ^d} u(\alpha)
\end{equation*}
and $R_\s^{e_k}(A):=\{\alpha\in  A\cap\s\ZZ^d: [\alpha,\alpha+\s e_k]\subset A\}$ for every $k\in\{1,\dots, d\}$. 

We have
    \begin{align}\notag
       \sum_{\alpha\in  A\cap\s\ZZ^d}|u(\alpha)-(u)_A|^p & =  \sum_{\alpha\in A\cap\s\ZZ^d}\Bigl|u(\alpha)-\frac{1}{\# (A\cap\s\ZZ^d)}\sum_{\beta \in A\cap\s\ZZ^d}u(\beta)\Bigr|^p \\ \notag
       & = \sum_{\alpha\in A\cap\s\ZZ^d}\Bigl|\frac{1}{\# (A\cap\s\ZZ^d)}\sum_{\beta \in A\cap\s\ZZ^d}(u(\alpha)-u(\beta))\Bigr|^p \\ \label{poincare 1}
       & \leq \frac{1}{\# (A\cap\s\ZZ^d)}\sum_{\alpha\in A\cap\s\ZZ^d}\sum_{\beta \in A\cap\s\ZZ^d}|u(\alpha)-u(\beta)|^p.  
    \end{align}

Suppose $N=1$; i.e., $A$ is a rectangle with sides parallel to the coordinate axes and, without loss of generality, further assume that it is centered at the origin. Slightly adapting \cite[Lemma 3.6]{AliCic}, we get
\begin{align} \notag
    \sum_{\alpha\in A\cap\s\ZZ^d}\sum_{\beta \in A\cap\s\ZZ^d}|u(\alpha)-u(\beta)|^p
    & \leq \sum_{\xi\in \frac{2}{\s}A \cap\ZZ^d} \sum_{\alpha\in R_\s^{\xi}(A)}|u(\alpha+\s\xi)-u(\alpha)|^p \\ \notag
    & \leq C \sum_{\xi\in \frac{2}{\s}A \cap\ZZ^d} |\xi|^p\sum_{k=1}^d\sum_{\alpha\in R_\s^{e_k}(A)}|u(\alpha+\s e_k)-u(\alpha)|^p \\ \label{poincare 2}
    & \leq  C\#(2 A\cap \s\ZZ^d) \Bigl(\frac{\text{diam } A}{\s}\Bigr)^p\sum_{k=1}^d\sum_{\alpha\in R_\s^{e_k}(A)}|u(\alpha+\s e_k)-u(\alpha)|^p.
\end{align}
Recalling \eqref{poincare 1} we infer
\begin{align*}
   \sum_{\alpha\in  A\cap\s\ZZ^d}|u(\alpha)-(u)_A|^p 
    & \leq  C\frac{\#(2 A\cap \s\ZZ^d) }{\# (A\cap\s\ZZ^d)}\Bigl(\frac{\text{diam } A}{\s}\Bigr)^p\sum_{k=1}^d\sum_{\alpha\in R_\s^{e_k}(A)}|u(\alpha+\s e_k)-u(\alpha)|^p \\
    & \leq C 2^{d+1} (\text{diam } A)^p\frac{1}{\s^p}\sum_{k=1}^d\sum_{\alpha\in R_\s^{e_k}(A)}|u(\alpha+\s e_k)-u(\alpha)|^p,
\end{align*}
where we used that $\#(2 A_h\cap \s\ZZ^d)\leq 2^{d+1}\#(A\cap \s\ZZ^d)$ for $\s$ sufficiently small. 

If $N\geq 2$, by Lemma \ref{lemma: cammini} we have
\begin{equation*}
      \sum_{\alpha\in A\cap\s\ZZ^d}\sum_{\beta \in A\cap\s\ZZ^d}|u(\alpha)-u(\beta)|^p\leq (1+CN^{p+1})\sum_{h=1}^N\sum_{\alpha\in A_h\cap\s\ZZ^d}\sum_{\beta \in A_h\cap\s\ZZ^d}|u(\alpha)-u(\beta)|^p
\end{equation*}
and then, arguing as in \eqref{poincare 2} for fixed $h\in\{1,\dots,N\}$, we get
\begin{align*}
 &\sum_{\alpha\in A\cap\s\ZZ^d}\sum_{\beta \in A\cap\s\ZZ^d}|u(\alpha)-u(\beta)|^p \\ &\leq C(1+CN^{p+1})\sum_{h=1}^N
    \#(2 A_h\cap \s\ZZ^d) \Bigl(\frac{\text{diam } A_h}{\s}\Bigr)^p\sum_{k=1}^d\sum_{\alpha\in R_\s^{e_k}(A_h)}|u(\alpha+\s e_k)-u(\alpha)|^p \\
    & \leq CN(1+CN^{p+1})\#(2 A\cap \s\ZZ^d) \Bigl(\frac{\text{diam } A}{\s}\Bigr)^p\sum_{k=1}^d\sum_{\alpha\in R_\s^{e_k}(A)}|u(\alpha+\s e_k)-u(\alpha)|^p.
\end{align*}
Therefore, recalling \eqref{poincare 1}, we conclude
\begin{equation*}
        \sum_{\alpha\in  A\cap\s\ZZ^d}|u(\alpha)-(u)_A|^p \leq CN(1+CN^{p+1})2^{d+1}(\text{diam } A)^p \frac{1}{\s^p}\sum_{k=1}^d\sum_{\alpha\in R_\e^{e_k}(A)}|u(\alpha+\s e_k)-u(\alpha)|^p,
    \end{equation*}
for $\s$ sufficiently small. This proves the thesis for the case $A'=A$ in the discrete setting and concludes the proof.
\end{proof}

\begin{remark}
   In the discrete case, the constant in inequality \eqref{PW-app} depends on a universal constant coming from \cite[Lemma 3.6]{AliCic}, the scaling-invariant constant of inequality \eqref{eq: archi}, the number of rectangles that constitute $A$ and $(\text{diam }A)^p$. Hence, such a constant exhibits the correct behaviour under homothety of the domain, in contrast with \cite[Lemma 2 and Lemma 3]{BraSig}. In particular, if we let $C(A)$ denote the constant associated with $A$, we have that $C(\tau A)=\tau^p C(A)$.
\end{remark}

We conclude this appendix with the proof of Proposition \ref{prop: pcre}, the rescaled version of Poincaré inequality. 

\begin{proposition}
[Discrete Poincaré inequality]
\label{Appendix Discrete Poincare}
 Let $T>1, x_0\in \rr^d,$ and let $A\subset \mathbb{R}^d$ be a bounded open set. There exists a positive constant $C$ depending on $A$ such that, having set
\begin{equation*}
    R_\s^{e_k}(E):=\{\alpha\in E\cap \s\ZZ^d: [\alpha,\alpha+\s e_k]\subset E\}
\end{equation*}
for every set $E$ such that $\#(E\cap \s\ZZ^d)\neq0$ and $k\in\{1,\dots, d\}$, it holds
 \begin{equation*}
    \sum_{\alpha \in (x_0+\tau A) \cap \s\mathbb{Z}^d}|u(\alpha)|^p\leq C\Big(\frac{\tau}{\s}\Big)^p\sum_{k=1}^d\sum_{\alpha\in R_\s^{e_k}(x_0+\tau A)}|u(\alpha+\s e_k)-u(\alpha)|^p
\end{equation*}
for every $\tau,\s>0$ and for every $u:(x_0+\tau A)\cap \s\mathbb{Z}^d\to \mathbb{R}^m$ such that $u(\alpha)=0$ if  \mbox{$\text{dist}_\infty(\alpha ,(x_0+\tau A)^c)\leq \s T$.}
\end{proposition}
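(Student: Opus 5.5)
The plan is to reduce by translation and scaling to $x_0=0$. Then I will compare each lattice value $u(\alpha)$ with a boundary value, which is zero, along an axis-parallel chain of nearest-neighbour bonds that stays inside the set. Write $E:=\tau A$. Since $A$ is bounded, $E\subset Q(\tau D)$ for some $D$ depending only on $A$.

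For $\alpha\in E\cap\s\ZZ^d$, move in the direction $-e_1$ through the points $\alpha-\s e_1,\alpha-2\s e_1,\dots$ and stop at the first point $\beta$ with $\dist_\infty(\beta,E^c)\leq\s T$. There $u(\beta)=0$ by hypothesis. Before stopping, every visited point $\gamma$ satisfies $\dist_\infty(\gamma,E^c)>\s T>\s$. Hence each bond $[\gamma-\s e_1,\gamma]$ lies in $E$, and $\gamma-\s e_1\in R^{e_1}_\s(E)$. The chain must stop after at most $K:=\lceil \tau D/\s\rceil+1$ steps, because the ray leaves $E$ within horizontal length $\tau D$.

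The one case to watch is the point where the chain enters the boundary layer. Let $\gamma$ be the last point with $\dist_\infty(\gamma,E^c)>\s T$ and $\beta=\gamma-\s e_1$. The bond $[\beta,\gamma]$ is contained in the closed $\infty$-ball of radius $\s$ around $\gamma$, which lies in $E$ because $\s<\s T$. So this bond is admissible too, and no special care is needed there; this is exactly where $T>1$ is used.

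Next I apply Jensen's (Hölder's) inequality. Write $u(\alpha)=\sum_{s}(u(\gamma_{s})-u(\gamma_{s}-\s e_1))$ over the chain, with at most $K$ terms. This gives
\begin{equation*}
|u(\alpha)|^p\leq K^{p-1}\sum_{s}|u(\gamma_s)-u(\gamma_s-\s e_1)|^p .
\end{equation*}
Now sum over $\alpha$. A fixed bond $[\beta',\beta'+\s e_1]$ with $\beta'\in R^{e_1}_\s(E)$ is used only by points $\alpha$ on the same horizontal line to its right, within distance $K\s$. So its multiplicity is at most $K$. Therefore
\begin{equation*}
\sum_{\alpha\in E\cap\s\ZZ^d}|u(\alpha)|^p\leq K^p\sum_{\beta'\in R^{e_1}_\s(E)}|u(\beta'+\s e_1)-u(\beta')|^p .
\end{equation*}
This is bounded by the full right-hand side with all $k$.

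It remains to check that $K^p\leq C(\tau/\s)^p$. This holds with $C$ depending on $D$ when $\s\leq\tau$. If $\s>\tau$, every lattice point of $E$ is within $\infty$-distance $\tau D/2$ of $E^c$, which is at most $\s T$ once $D/2\le T$, or after enlarging $C$. In that regime $u\equiv0$ and the inequality is trivial. More precisely, split into the case $\s\geq 2\tau D/T$, where all lattice points lie in the layer, and the complementary case, where $K\leq C\tau/\s$.

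The main obstacle is the bookkeeping that the chain stays inside $E$ up to and including the final bond, since $E$ need not be convex or connected. The stopping rule at the boundary layer is exactly what guarantees this. The multiplicity bound is routine.
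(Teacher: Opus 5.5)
Your proof is correct, and it takes a different route from the paper's.

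The paper reduces to $x_0=0$, $\tau=1$ and builds the piecewise-affine interpolation $\hat u$ of $u$ on a triangulation of the lattice. It treats $d=2$ in detail and invokes Kuhn's decomposition for general $d$. The boundary layer of width $\sigma T$ with $T>1$ gives $\hat u\in W^{1,p}_0(A;\mathbb{R}^m)$, and the continuous Poincaré inequality on $A$ then applies. Finally it compares $\int_A|\nabla\hat u|^p$ with the nearest-neighbour energy, and $\sum_\alpha\sigma^d|u(\alpha)|^p$ with $\int_A|\hat u|^p$.

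You instead work directly on the lattice. Your steps are a discrete fundamental theorem of calculus along $e_1$-lines with a stopping rule at the layer, Jensen's inequality, and a multiplicity count. Your use of $T>1$ plays the same role as in the paper. The closed $\infty$-ball of radius $\sigma$ around a point outside the layer lies in the set, so every bond used, including the last one, belongs to $R^{e_1}_\sigma$. In the paper the same fact shows that $\hat u$ has zero trace and that only such bonds enter $\nabla\hat u$.

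What your route buys:
\begin{itemize}
\item It is elementary and self-contained, with no triangulation, no appeal to the continuum inequality, and no norm-equivalence step for piecewise-affine functions.
\item It works verbatim in every dimension.
\item It uses only bonds in a single direction.
\item It gives an explicit constant of order $D^p$ in terms of the width of $A$.
\end{itemize}
The price is the case split when $\sigma$ is large compared to $\tau$, which is trivial since then $u\equiv 0$.

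What the paper's route buys: it treats all $\sigma$ uniformly, and its constant is, up to dimensional factors, the optimal continuum Poincaré constant of $A$. The interpolation device also transfers any continuum inequality to the discrete setting.
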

\begin{proof}
We prove the result for $x_0=0$ and $\tau=1$ since the general case follows by a translation and scaling argument. We argue associating to any admissible function $u$ a piecewise-affine function obtained by linearly interpolating the values of $u$ on a triangulation of the lattice $\s\mathbb{Z}^{d}$. For the sake of simplicity, we prove in detail the case $d=2$ and we remark that the same argument can be generalized to any dimension resorting to the Kuhn's decomposition of a cube \cite{Kuhn1960} (see for example \cite{ALP}). 

We identify $u$ with its extension to the whole $\s\mathbb{Z}^2$ that equals zero outside $A\cap \s\ZZ^2$. We set
    \begin{equation*}
        T^{-}:=\{(x_{1},x_{2})\in [0,1]^{2}: x_{2}\leq 1-x_{1}\},
    \end{equation*}
    \begin{equation*}
         T^{+}:=\{(x_{1},x_{2})\in [0,1]^{2}: x_{2}\geq 1-x_{1}\},
    \end{equation*}
    and, given any admissible function $u: \s \ZZ^2\to \rr^m$, we define $\hat{u}:\rr^2\to \rr^m$ as 
    \begin{equation*}
    \hat{u}(x):=u(\alpha)+\frac{u(\alpha+\s e_{1})-u(\alpha)}{\s}(x_{1}-\alpha_{1})+\frac{u(\alpha+\s e_{2})-u(\alpha)}{\s}(x_{2}-\alpha_{2})\ \quad \text{if}\ x \in \alpha +\s T^{-},
    \end{equation*}
    \begin{equation*}
    \begin{split}
        \hat{u}(x):=u(\alpha+\s(e_1+e_2))&+\frac{u(\alpha+\s(e_1+e_2))-u(\alpha+\s e_2)}{\s}(x_1-\alpha_1-\s)\\
&+\frac{u(\alpha+\s(e_1+e_2))-u(\alpha+\s e_1)}{\s}(x_2-\alpha_2-\s)\ \quad \text{if}\  x\in \alpha +\s T^{+}
    \end{split}
    \end{equation*}
for every $\alpha \in \s \ZZ^2$. By the boundary condition on $u$ we infer that  $\hat{u} \in W^{1,p}_0(A;\mathbb{R}^m)$. Hence the standard Poincaré inequality holds and we have
    \begin{equation}
    \label{strd_pcre}
        \int_A |\hat u(x)|^p\,dx
\leq
C\int_A |\nabla \hat u(x)|^p\,dx.
    \end{equation}
    Let us set $P^{\s,1}_{\alpha}:=\alpha+\s P_1$ and $P^{\s,2}_{\alpha}:=\alpha+\s P_2$ where 
    \begin{equation*}
        P_1=\{(x_1,x_2) \in \mathbb{R}^{2}: 0\leq x_{1}\leq 1, -x_{1}\leq x_{2}\leq 1-x_{1}\},
    \end{equation*}
    \begin{equation*}
        P_2=\{(x_1,x_2) \in \mathbb{R}^{2}: 0\leq x_{2}\leq 1, -x_{2}\leq x_{1}\leq 1-x_{2}\}.
    \end{equation*}
    One can easily show that
    \begin{equation*}
\partial_{e_{k}}\hat{u}(x)=\frac{u(\alpha+\s e_k)-u(\alpha)}{\s}, \qquad \text{ for every } \alpha\in \s\ZZ^2, k\in\{1,2\}, \text{ and }  x\in P^{\s,k}_{\alpha},
    \end{equation*}
and since $|P^{\s,1}_{\alpha}|=|P^{\s,2}_{\alpha}|=\s^{2}$, we get the following estimate
    \begin{align}\notag
\int_{A}|\nabla \hat{u}|^{p}& \leq C \int_{A}|\partial_{e_{1}}\hat{u}|^{p}+|\partial_{e_{2}}\hat{u}|^{p}\,dx\\ \notag & = C \sum_{\alpha \in A\cap \s\mathbb{Z}^{2}}\Big(\int_{P^{\s,1}_{\alpha}\cap A}|\partial_{e_{1}}\hat{u}|^{p}\,dx+\int_{P^{\s,2}_{\alpha}\cap A}|\partial_{e_{2}}\hat{u}|^{p}\,dx\Big)\\ 
&= C\sum_{k=1}^2\sum_{\alpha\in R_\s^{e_k}(A)}\s^{2}\Bigl|\frac{u(\alpha+\s e_k)-u(\alpha)}{\s}\Bigr|^{p}.
\label{estimate_gradient_term}
    \end{align}

On the other hand, we have that 
\begin{equation}\label{equiv_norm}
     \sum_{\alpha \in A \cap \s\mathbb{Z}^2}\s^2|u(\alpha)|^p=\sum_{\alpha \in A \cap \s\mathbb{Z}^2}\s^2|\hat{u}(\alpha)|^p\leq C\int_A |\hat{u}(x)|^p\,dx
\end{equation}
for a positive constant $C$ independent of $\s$ and $u$. Hence, combining \eqref{strd_pcre}, \eqref{estimate_gradient_term} and \eqref{equiv_norm} we obtain 
    \begin{equation*}
        \sum_{\alpha \in A \cap \s\mathbb{Z}^2}\s^2|u(\alpha)|^p\leq  C\sum_{k=1}^2\sum_{\alpha\in R_\s^{e_k}(A)}\s^{2}\Bigl|\frac{u(\alpha+\s e_k)-u(\alpha)}{\s}\Bigr|^{p},
    \end{equation*}
    which is the thesis.
\end{proof}

{\textbf{Acknowledgements.}} 
    The authors thank Roberto Alicandro, Andrea Braides, Marco Cicalese, and Chiara Leone for useful comments. The authors are member of Gruppo Nazionale per l'Analisi Matematica, la Probabilità e le loro Applicazioni (GNAMPA) of Istituto Nazionale di Alta Matematica (INdAM). G. C. B. acknowledges the support of the INdAM - GNAMPA 2026 Project ``Analisi variazionale per operatori locali e nonlocali possibilmente singolari o degeneri" (CUP: E53C25002010001). G. F. acknowledges the support of the INdAM - GNAMPA 2026 Project ``Analisi e Gamma-convergenza per alcuni funzionali non locali” (CUP: E53C25002010001).
    
\smallskip

{\textbf{Statements and Declarations.} The authors declare to have no financial or non-financial interests related to the work submitted for publication.}

\smallskip

{\textbf{Data Availability statement.} Data sharing not applicable to this article since no datasets were generated or analyzed during the current study.}

\bibliographystyle{plain} 
\bibliography{refs} 

\end{document}